\newcommand{\wt}{\widetilde}
\newcommand{\wh}{\widehat}
\newcommand{\serie}[1]{\{#1_{n}\}_n}
\def\C{\mathbb{C}}
\def\R{\mathbb{R}}
\def\trace{\mathrm{tr}}
\def\rank{\mathrm{rank}}
\newcommand{\lowbiglquote}[1][18]{%
	\setbox0=\hbox{\fontsize{#1}{0}\selectfont``}%
	\setlength{\dimen0}{\ht0 - \heightof{A}}%
	\noindent\llap{\smash{\lower\dimen0\box0 }}}
\newcommand{\lowbigrquote}[1][18]{%
	\setbox0=\hbox{\fontsize{#1}{0}\selectfont''}%
	\setlength{\dimen0}{\ht0 - \heightof{A}}%
	\unskip\rlap{\smash{\lower\dimen0\box0 }}}
\def\Prob{\mathbb{P}}
\newcommand{\Expect}{\operatorname{\mathbb{E}}}
\newcommand{\normal}{\mathcal{N}}
\newcommand{\vf}{\varphi}
\newcommand{\ve}{\varepsilon}
\newcommand{\sqsize}[1]{{#1\times#1}}
\newcommand{\f}{\mathbb}
\spnewtheorem{theorem}{Theorem}[section]{\bfseries}{\itshape}
\spnewtheorem{lemma}[theorem]{Lemma}{\bfseries}{\itshape}
\spnewtheorem{definition}[theorem]{Definition}{\bfseries}{\itshape}
\spnewtheorem{remark}[theorem]{Remark}{\bfseries}{\upshape}
\spnewtheorem{example}[theorem]{Example}{\bfseries}{\upshape}
\spnewtheorem{corollary}[theorem]{Corollary}{\bfseries}{\itshape}
\spnewtheorem{proposition}[theorem]{Proposition}{\bfseries}{\itshape}
\spnewtheorem{algorithm}[theorem]{Algorithm}{\bfseries}{\upshape}
\numberwithin{theorem}{section}
\begin{document}
	
	\title{The limit empirical spectral distribution of Gaussian monic complex matrix polynomials}
	
	\titlerunning{The empirical spectral distribution of Gaussian monic complex matrix polynomials}        
	
	\author{Giovanni Barbarino        \and
		Vanni Noferini 
	}
	
	\institute{G. Barbarino \at
		Department of Mathematics and Systems Analysis, Aalto University, Finland.
		\email{giovanni.barbarino@aalto.fi}    
		\and
		V. Noferini \at
		Department of Mathematics and Systems Analysis, Aalto University, Finland.
		\email{vanni.noferini@aalto.fi}  
	}
	
	\date{}	
	
	\maketitle

	\begin{abstract}
		We define the empirical spectral distribution (ESD) of a random matrix polynomial with invertible leading coefficient, and we study it for complex $n \times n$ Gaussian monic matrix polynomials of degree $k$. We obtain exact formulae for the almost sure limit of the ESD in two distinct scenarios: (1)  $n \rightarrow \infty$ with $k$ constant and (2) $k \rightarrow \infty$ with $n$ constant. The main tool for our approach is the replacement principle by Tao, Vu and Krishnapur. Along the way, we also develop some auxiliary results of potential independent interest: we slightly extend a result by B\"{u}rgisser and Cucker on the tail bound for the norm of the pseudoinverse of a non-zero mean matrix, and we obtain several estimates on the singular values of certain structured random matrices.
		
		\keywords{random matrix polynomial \and empirical spectral distribution \and polynomial eigenvalue problem \and strong circle law \and companion matrix}
		\subclass{60B20 \and 15B52 \and 15A15 \and 65F15 \and 65F20}
	\end{abstract}

	\section{Introduction}\label{sec:introduction}
	
	Given $n \times n$ matrices $C_0, C_1, \dots, C_k \in \C^{n \times n}$, consider the square matrix polynomial of degree $k$
	\begin{equation}\label{eq:matpoly}
	P(x) = \sum_{j=0}^k C_j x^j;
	\end{equation}
	a finite eigenvalue of $P(x)$ is then defined \cite{Dopico2018,LN20} as a number $\lambda \in \C$ such that
	\[  \rank _\C P(\lambda) <  \rank_{\C(x)} P(x). \]
	The polynomial eigenvalue problem (PEP) is to find all such eigenvalues \cite{AT12,DLPVD18,Dopico2018,GLRBook,GT17,LN20,NP15,TM01}, possibly (and depending on the application) together with other objects -- such as eigenspaces, infinite eigenvalues, minimal indices and minimal bases -- whose precise definition is not relevant for this paper. Under the generic assumption that $\det P(x) \not \equiv 0$, the finite eigenvalues of $P(x)$ are the roots of its determinant. Polynomial eigenvalue problems are common in several areas of applied and computational mathematics; their applications include acoustics, control theory, fluid mechanics, and structural engineering \cite{GLRBook,GT17,TM01}.
	
	Clearly, two very classical mathematical problems arise as special cases of polynomial eigenvalue problems: finding the roots of a scalar polynomial corresponds to $n=1$, while finding the eigenvalues of a matrix corresponds to $k=1$ and $C_1=I_n$. When randomness enters the game, these two extremes are well understood. It is known that, when the polynomial coefficients are i.i.d. normally distributed random variables and in the limit $k \rightarrow \infty$, then the roots of scalar polynomials are uniformly distributed on the unit circle.  Similarly, classical results in random matrix theory state that, when the entries of an $n \times n$ matrix are i.i.d. normally distributed random variables with mean $0$ and variance $n^{-1}$, and in the limit $n \rightarrow \infty$, then the eigenvalues are uniformly distributed on the unit disc. Moreover, the phenomenon of universality is well known: there exist works that, under relatively mild assumptions, extend these results to several other distributions of coefficients or entries.
	
	To our knowledge, nothing was so far explicitly known about the eigenvalues of random matrix polynomials, except for the two extremal cases above described. In this paper, we fill this gap by computing the empirical eigenvalue distribution of  monic ($C_k=I)$ square matrix polynomials of size $n$ and degree $k$, with all but the leading coefficient being i.i.d. complex Gaussian random matrices,  in two different limits: when $n \rightarrow \infty$ with $k$ constant, and when $k \rightarrow \infty$ with $n$ constant. Moreover, our results can equivalently be interpreted as results on the empirical eigenvalue distribution of certain structured random matrices: indeed, given a monic matrix polynomials $P(x)$, a \emph{linearization} of $P(x)$ is a matrix whose eigenvalues (as well as their geometric and algebraic multiplicities) coincide with those of $P(x)$. In the numerical linear algebra literature, numerous constructions of linearizations are known: see, e.g., \cite{DLPVD18,GLRBook,NP15} and the references therein. In particular, the prototype of all linearizations is the so-called companion matrix, which plays a central role in this paper.
	
	In previous research on matrix polynomials, probability theory was used in the context of analysing the condition number of PEPs. Namely, in \cite{AB19} Armentano and Beltr\'{a}n  computed the average eigenvalue condition number for Gaussian random complex matrix polynomials; and in \cite{BK19}, Beltr\'{a}n and Kozhasov extended the analysis to the case of real Gaussian matrix polynomials. In \cite{LN20}, Lotz and Noferini went beyond the classical idea of condition by imposing a uniform probability distribution on the sphere for perturbations of a fixed singular matrix polynomial. However, we are not aware of any previous work where the exact distribution of the eigenvalues of a random matrix polynomial is obtained. In addition to being interesting \emph{per se}, our results can potentially be valuable to numerical analysts in the context of testing numerical methods for the solution of the PEP. Indeed, although randomly generated problems are expected not to be very challenging from the numerical point of view (by the results in \cite{AB19,BK19}), it is common practice to use them as benchmark for minimal performance requirements;  in published research papers on this subject, tests on random input are in fact often included among the numerical experiments. The analytic knowledge of the limit eigenvalue distributions that we obtain in this article can help to predict the behaviour of randomly generated problems: when scrutinizing a novel algorithm, if the numerically computed eigenvalues should significantly deviate from the expectations then this fact can raise legitimate suspicions on the accuracy of the computations.

	The structure of the paper is as follows. In Section \ref{sec:preliminaries} we review some necessary background material on linear algebra, matrix polynomial theory, probability theory, and random matrix theory. Moreover, we define the empirical spectral distribution of a random matrix polynomial with invertible leading coefficient. In Section \ref{sec:n} we obtain our first main result: the almost sure limit, for $n \rightarrow \infty$, of the empirical spectral distribution of a random $n \times n$ monic complex Gaussian matrix polynomial of degree $k$. In Section \ref{sec:k}, our second main result is discussed: the almost sure limit, for $k \rightarrow \infty$, of the empirical spectral distribution of a random $n \times n$ monic complex Gaussian matrix polynomial of degree $k$. In Section \ref{sec:concl} we draw some conclusions and propose new lines of research. To keep the main part of the paper as easily readable as possible, the proof of some technical lemmata, needed in Sections \ref{sec:n} and \ref{sec:k}, is postponed to Appendix \ref{sec:app}; however, we believe that some of those results could have independent interest. In particular, we slightly improve known results on the tail bounds for pseudoinverses of random matrices with nonzero mean, and we study the extremal singular values of certain structured random matrices.
	
	\section{Mathematical background}\label{sec:preliminaries}

	\subsection{Linear algebra}

	Given an $m \times n$ complex matrix $X$, we denote it singular values by $\sigma_1(X) \geq \dots \geq \sigma_{\min}(X) \geq 0$, having introduced the shorthand $\sigma_{\min}(X) :=\sigma_{\min(m,n)}(X) $. The spectral norm of $X$ is denoted by $\| X \| :=\sigma_1(X)$, while the Frobenius norm of $X$ is
	\[ \| X \|_F = \left( \sum_{i=1}^m \sum_{j=1}^n |X_{ij}|^2 \right)^{1/2} = \sqrt{\trace(X^* X)} = \sqrt{\sigma_1(X)^2 + \dots + \sigma_{\min}(X)^2} . \]
	Recall that any $X$ admits a singular value decomposition $U\Sigma V$ with $U\in \C^{m\times m}$, $V\in \C^{n\times n}$ unitary matrices and $\Sigma\in \R^{m\times n}$ diagonal real matrix whose diagonal elements are the singular values $\sigma_i(X)\ge 0$. 
	The Moore-Penrose pseudoinverse of $X$ is the matrix $X^\dagger=V^*\Sigma^\dagger U^*$, where $\Sigma^\dagger\in \R^{n\times m}$ is a diagonal real matrix whose diagonal entries are $\Sigma_{i,i}^\dagger =\Sigma_{i,i}^{-1} = \sigma_i(X)^{-1}$ if $\sigma_i(X)>0$  and zero otherwise. 
	Note that, if $X$ has full rank,  then $\|X^{\dagger}\| = 1/\sigma_{\min}(X)$. We also use the induced $1$ and $\infty$ matrix norms, defined respectively as
	\[
	\|X\|_1 = \max_{1\le j\le n} \sum_{i=1}^m |X_{ij}|,
	\qquad
	\|X\|_\infty = \max_{1\le i\le m} \sum_{j=1}^n |X_{ij}|.
	\]
	Since $\C^{m \times n}$ is finite dimensional, the various norms mentioned above are of course equivalent to each other, and the following relations will be useful to us:
	\begin{equation}
	\|X\|\le \|X\|_F,\qquad	\|X\|\le \sqrt{ \|X\|_\infty\|X\|_1   }.
	\end{equation}

	\noindent
	An interlacing result for the singular values arises when we consider low rank perturbation of matrices.
	\begin{theorem}[Interlacing Singular Values for Low-Rank Perturbations \cite{THO76}]\label{thm:Interlacing2}
		Let $A$ and $E$ be  $n\times n$ matrices, where $E$ has rank at most $k$. If $B = A+E$ and the singular values of $A$ and $B$ are, respectively, 
		\[
		\alpha_1\ge \alpha_2 \ge\dots\ge \alpha_{n}, \qquad
		\beta_1\ge \beta_2 \ge\dots\ge \beta_{n},
		\]
		then
		\begin{align*}
		&\alpha_i\ge \beta_{i+k}, & i=1,2,\dots,n-k,\\
		&\beta_i\ge \alpha_{i+k}, & i=1,2,\dots,n-k.
		\end{align*}
	\end{theorem}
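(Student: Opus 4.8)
The plan is to derive both families of inequalities from the Courant--Fischer min-max characterization of singular values, combined with a dimension count for the kernel of the low-rank perturbation. Recall that for every $n \times n$ matrix $M$ and every index $j \in \{1,\dots,n\}$ one has
\[
\sigma_j(M) = \min_{\substack{\SS \cu \C^n \\ \dim \SS = n-j+1}} \ \max_{x \in \SS \setminus \{0\}} \frac{\|Mx\|}{\|x\|},
\]
and the outer minimum is attained at some subspace of the stated dimension (the Grassmannian is compact and the inner maximum depends continuously on $\SS$).

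Fix $i \in \{1,\dots,n-k\}$. For the inequality $\beta_{i+k}\le\alpha_i$, let $\SS_A$ be an optimal $(n-i+1)$-dimensional subspace for $\alpha_i = \sigma_i(A)$, so that $\|Ax\|\le\alpha_i\|x\|$ for all $x\in\SS_A$. Since $\rank E \le k$, the kernel $\ker E$ has dimension at least $n-k$, hence by Grassmann's identity $\SS' := \SS_A\cap\ker E$ has dimension at least $(n-i+1)+(n-k)-n = n-(i+k)+1$. Picking any $\SS''\cu\SS'$ with $\dim\SS'' = n-(i+k)+1$ and using that $Bx = Ax + Ex = Ax$ for every $x\in\SS''\cu\ker E$, the min-max formula for $\sigma_{i+k}(B)$ gives
\[
\beta_{i+k} \le \max_{x\in\SS''\setminus\{0\}} \frac{\|Bx\|}{\|x\|} = \max_{x\in\SS''\setminus\{0\}} \frac{\|Ax\|}{\|x\|} \le \alpha_i .
\]
For the reverse family $\alpha_{i+k}\le\beta_i$, it suffices to note that $A = B + (-E)$ with $\rank(-E) \le k$, so applying what has just been proved with the roles of $A$ and $B$ interchanged yields exactly $\alpha_{i+k}\le\beta_i$.

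I do not expect a genuine obstacle: the only points requiring care are the dimension count for $\SS_A\cap\ker E$ and invoking the min-max formula with the correct codimension $n-(i+k)+1$. An alternative route would be to pass to the Hermitian dilation $\left[\begin{smallmatrix} 0 & M\\ M^* & 0\end{smallmatrix}\right]$, whose nonzero eigenvalues are $\pm\sigma_j(M)$, and quote Weyl-type eigenvalue interlacing for Hermitian matrices under a perturbation of rank at most $2k$; but the direct subspace argument above is shorter and self-contained, so that is the version I would write out.
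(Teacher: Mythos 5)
Your proof is correct: the Courant--Fischer characterization $\sigma_j(M)=\min_{\dim\SS=n-j+1}\max_{0\ne x\in\SS}\|Mx\|/\|x\|$, the dimension count $\dim(\SS_A\cap\ker E)\ge n-(i+k)+1$, and the symmetry $A=B+(-E)$ together give exactly the two stated families of inequalities, and the range $i\le n-k$ is precisely what makes the relevant subspace nonempty. Note that the paper does not prove this statement at all --- it is quoted directly from Thompson's 1976 article --- so there is no in-paper argument to compare against; your subspace argument is the standard self-contained derivation of that result (the Hermitian-dilation route you mention also works, at the cost of a rank-$2k$ bookkeeping step).
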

	
	\noindent
	If the norm of the perturbation, as opposed to its rank, is to be used to estimate the singular values, then we can appeal to the following result attributed to Mirsky, which is a corollary of the minimax principle for singular values.
	\begin{theorem}[Perturbation Theorem \cite{MIR60}]\label{thm:Perturbation}
		Given two $\sqsize{ n}$ matrices $A,B$,  with singular values, respectively, 
		\[
		\alpha_1\ge \alpha_2 \ge\dots\ge \alpha_n, \qquad
		\beta_1\ge \beta_2 \ge\dots\ge \beta_n,
		\]
		then 
		\[
		|\alpha_i-\beta_i|\le \|A-B\|, \qquad i=1,2,\dots,n.
		\]
	\end{theorem}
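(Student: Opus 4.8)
The plan is to derive the bound from the Courant--Fischer min-max characterization of singular values, exactly as the name ``corollary of the minimax principle'' suggests. Recall that for any $\sqsize{n}$ matrix $M$ and any $i\in\{1,\dots,n\}$ one has
\[
\sigma_i(M)=\min_{\substack{\mathcal V\cu\C^n\\ \dim\mathcal V=n-i+1}}\ \max_{\substack{x\in\mathcal V\\ \|x\|=1}}\|Mx\|,
\]
which follows by applying the usual minimax principle to the Hermitian positive semidefinite matrix $M^{*}M$, whose eigenvalues are $\sigma_1(M)^2\ge\dots\ge\sigma_n(M)^2$, together with the identity $\|Mx\|^2=x^{*}M^{*}Mx$ and the monotonicity of the square root. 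The point of using the $(n-i+1)$-dimensional form is that a single subspace can serve simultaneously as a test space in the characterization of $\sigma_i(A)$ and of $\sigma_i(B)$.

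Now set $E:=A-B$, so that $\|E\|=\|A-B\|$, and fix $i$. Let $\mathcal V_{\star}$ be a subspace of dimension $n-i+1$ attaining the minimum in the characterization of $\alpha_i=\sigma_i(A)$, i.e.\ $\alpha_i=\max_{x\in\mathcal V_{\star},\,\|x\|=1}\|Ax\|$. Since $\mathcal V_{\star}$ is an admissible competitor in the analogous min-max for $\beta_i=\sigma_i(B)$, and since $B=A-E$, the triangle inequality for the Euclidean vector norm together with $\|Ex\|\le\|E\|$ for unit vectors $x$ gives
\[
\beta_i\le\max_{\substack{x\in\mathcal V_{\star}\\ \|x\|=1}}\|Bx\|
=\max_{\substack{x\in\mathcal V_{\star}\\ \|x\|=1}}\|Ax-Ex\|
\le\max_{\substack{x\in\mathcal V_{\star}\\ \|x\|=1}}\bigl(\|Ax\|+\|Ex\|\bigr)
\le\alpha_i+\|E\|.
\]
Exchanging the roles of $A$ and $B$ (noting $\|B-A\|=\|E\|$) yields $\alpha_i\le\beta_i+\|E\|$, and combining the two inequalities gives $|\alpha_i-\beta_i|\le\|E\|=\|A-B\|$ for every $i$, as claimed.

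There is no real obstacle here: the only thing to be careful about is to invoke the min-max in the form that lets one reuse the same optimal subspace for both matrices (equivalently, one could run the dual $\max$-$\min$ argument, picking the optimal $i$-dimensional subspace for $\beta_i$). As an alternative route one could instead apply Weyl's perturbation inequality for eigenvalues of Hermitian matrices to the Jordan--Wielandt dilations $\left[\begin{smallmatrix}0&A\\ A^{*}&0\end{smallmatrix}\right]$ and $\left[\begin{smallmatrix}0&B\\ B^{*}&0\end{smallmatrix}\right]$, whose nonnegative eigenvalues are the $\alpha_i$ and the $\beta_i$ and whose difference has spectral norm $\|A-B\|$; but the direct min-max argument above is self-contained and equally short.
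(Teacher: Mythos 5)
Your proof is correct, and it follows exactly the route the paper itself indicates: the paper states this result without proof, citing Mirsky and describing it as ``a corollary of the minimax principle for singular values,'' which is precisely the Courant--Fischer argument you carry out. The details (using the $(n-i+1)$-dimensional min-max form so the optimal subspace for $\alpha_i$ can be reused as a competitor for $\beta_i$, then symmetrizing) are all sound.
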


	\subsection{Matrix polynomial theory}
	
	Let $P(x)$ be the matrix polynomial defined in \eqref{eq:matpoly}. We give here a brief overview of those aspects in the spectral theory of square complex matrix polynomials that are relevant to this paper. More detailed discussions can be found, e.g., in~\cite{AT12,Dopico2018,GLRBook,LN20} and the references therein.
	As mentioned in the introduction, an element $\lambda \in \C$ is said to be a finite eigenvalue of $P(x)$ if
	\begin{equation*}
	\rank_{\C}(P(\lambda)) < \rank_{\C(x)}(P(x)) =: r,
	\end{equation*}
	where $\C(x)$ is the field of fractions of $\C[x]$, that is, the field of rational functions with coefficients in $\C$.

	If the leading coefficient $C_k$ of the matrix polynomial $P(x)$ in \eqref{eq:matpoly} is invertible, then $P(x)$ has $kn$ finite eigenvalues. Under this assumption, one can define the \emph{companion matrix of $P(x)$} as (see e.g.  \cite{AT12})
	\begin{equation}\label{eq:companionmatrix}
	M=\begin{bmatrix}
	-C_k^{-1}C_{k-1} & -C_k^{-1}C_{k-2} & \dots & -C_k^{-1}C_{1} & -C_k^{-1}C_{0}\\
	I_n & 0 & 0 & \dots & 0\\
	0& I_n & 0 & \dots & 0\\
	\vdots & \ddots & \ddots & \ddots & \vdots  \\
	0 & \dots & 0 & I_n & 0
	\end{bmatrix} \in \C^{kn \times kn},
	\end{equation}
	where $I_n$ and $0$ are, respectively, the $n\times n$ identity and zero matrices. It is well known that the eigenvalues of $M$, defined in the classical sense, coincide with the finite eigenvalues of $P(x)$. As a consequence, under the assumption that $C_k$ is invertible,  studying the finite eigenvalues of  $P(x)$ is equivalent to studying the eigenvalues of the structured matrix $M$. Observe that, if $P(x)$ is monic, then $C_k = I$ so that the assumption is automatically satisfied.
	
	We can identify, say via an arbitrary but fixed rearrangement of the real and imaginary parts of the entries of each coefficient, the (real) vector space of $n \times n$ complex matrix polynomials of degree up to $k$ with $\R^{2(k+1)n^2}$. In this setting, let $\mathcal{S} \subset \R^{2(k+1)n^2}$ correspond to the subset of matrix polynomials that are regular and have $kn$ distinct finite eigenvalues. We conclude this subsection by observing that $\mathcal{S}$ is a nonempty Zariski open set, and hence, its complement has Lebesgue measure zero: in this sense, being regular with $kn$ distinct finite eigenvalues is a generic property of matrix polynomials.

	\subsection{Random Matrix Theory}\label{sec:Random_Matrix_Theory}

	Often, within our probabilistic arguments it will be crucial to consider matrices that have some deterministic entries and some other entries corresponding to (complex) random variables, which in turn can be seen as pairs of real random variables. We implicitly identify those matrices with a vector in $\R^N$, $N$ being the number of real random variables involved, and equipping $\R^N$ with an appropriate probability measure. In this context, we will often invoke, without explicit justification, the well known fact that events that happen in (subsets of) proper Zariski closed sets of $\R^N$ have probability zero: for example, we may claim that a certain square random matrix is almost surely invertible. Recalling that any proper Zariski closed set has Lebesgue measure zero, it follows immediately that the claimed property is true for any absolutely continuous probability measure (as are all the ones we discuss in this paper). The verification that, in all the instances where we make such a claim, the corresponding algebraic set is indeed contained in a proper algebraic set is a straightforward exercise in linear algebra, and we therefore omit the details.

	\subsubsection{Empirical spectral distributions}

	Given  a deterministic matrix $A\in M_{m}(\C)$ with eigenvalues $\lambda_1(A),\dots,\lambda_m(A)$ we say that its empirical spectral distribution (ESD) is the atomic measure
	\[
	\mu_A = \frac{1}{m} \sum_{i=1}^{m} \delta_{\lambda_i(A)},
	\]
	where the eigenvalues are considered with their respective algebraic multiplicities. A random matrix $A_n$ can be seen as a random variable with values in the appropriate space of matrices, that will usually be $M_{nk}(\f C)$, where $n$ and $k$ are fixed parameters. 
	We can extend the concept of ESDs to random matrices as follows.
	\begin{definition}\label{def:ESD}
		Given a random matrix $A$, its \textbf{empirical spectral distribution (ESD)} is a random variable with values in the space of probabilities on $\f C$, defined as
		\[
		\mu_A(\omega) := \mu_{A(\omega)} = \frac{1}{m} \sum_{i=1}^{m} \delta_{\lambda_i(A(\omega))}.
		\]	
	\end{definition}
	The space of probabilities on $\f C$ is a measurable subset of $\mathcal M^b(\f C)$, the space of signed measure of $\f C$ with bounded total variation, that is a Hausdorff space when equipped with the \textit{vague} (or \textit{weak$-^*$}) convergence of measures. 
	
	We will study the spectral distribution for some families of random matrices $\serie A$, and find that in our cases the sequence $\{\mu_{A_n}\}_n$ always converges almost surely (a.s.) to a constant random variable, that can be identified with a probability measure $\mu\in \f P(\f C)$. In this case, we simply write
	\[
	\mu_{A_n}\xrightarrow{a.s.} \mu.
	\] 
	The measure $\mu$ will thus be our candidate for the asymptotic
	spectral distribution of the family $\serie A$. 
	
	Finally, having let us consider a random matrix polynomial $P(x;\omega)$ of shape $n \times n$ and degree $k$, under the assumption that, for all $\omega \in \Omega$, $P(x;\omega)$ has invertible leading coefficient. This implies, in particular, that $P(x;\omega)$ has $kn$ finite eigenvalues, that we denote by $\lambda_1(P(x;\omega)),\dots,\lambda_{kn}(P(x;\omega))$.
	\begin{definition}\label{def:ESDP}
		Let $P(x;\omega)$ be a random matrix polynomial of size $n$ and degree $k$, such that its leading coefficient is invertible for all $\omega \in \Omega$. Its \textbf{empirical spectral distribution (ESD)}  is a random variable with values in the space of probabilities on $\C$, defined as
		\[
		\mu_P(\omega) := \mu_{P(x;\omega)} =\frac{1}{kn} \sum_{i=1}^{kn} \delta_{\lambda_i(P(x;\omega))}.
		\]	
	\end{definition}
	It is immediate by Definitions \ref{def:ESD} and \ref{def:ESDP} that the ESD of a random matrix polynomial coincides with the ESD of its (random) companion matrix \eqref{eq:companionmatrix}. Indeed, in this paper we will strongly rely on its equivalence.

	\subsubsection{The replacement principle and the circle law}
	
	Central to our arguments to derive the empirical spectral distributions is the so-called replacement principle: a tool in random matrix theory developed by Tao, Vu and Krishnapur. We recall it below.
	
	\begin{theorem}[Replacement Principle \cite{TVK10}]\label{thm:replacement}
		Let $A_m, B_m$ be two $m \times m$ random matrices. Assume that
		\begin{enumerate}
			\item The quantity $ \frac{1}{m^2} \left( \| A_m \|_F^2 + \|B_m\|^2_F \right) $ is bounded a.s.;
			\item For a.e.\ $z \in \C$,
			\[  \frac1m    \log \left|  \frac{\det( m^{-1/2}A_m  - z I  ) }{\det( m^{-1/2}B_m  - z I )}     \right|   \xrightarrow{a.s} 0. \]
			Then, $\mu_{\frac 1{\sqrt m}A_m} - \mu_{\frac 1{\sqrt m}B_m} \xrightarrow{a.s.} 0$.
		\end{enumerate}
	\end{theorem}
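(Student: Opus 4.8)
The statement above is the Replacement Principle of Tao, Vu and Krishnapur, so strictly speaking it is only recalled here; nonetheless, let me sketch the strategy of its proof, which also makes transparent the role of each hypothesis. The core idea is to pass from eigenvalues to the logarithmic potential: for an $m\times m$ matrix $M$ with ESD $\mu_M$, one has the distributional identity $\tfrac{1}{2\pi}\Delta_z\big(\tfrac1m\log|\det(M-zI)|\big)=\mu_M$ on $\C\cong\R^2$, since $\log|\det(M-zI)|=\sum_i\log|z-\lambda_i(M)|$ and $\Delta\log|z-w|=2\pi\delta_w$. Hence, for every test function $\varphi\in C_c^\infty(\C)$,
\[
\langle \mu_{m^{-1/2}A_m}-\mu_{m^{-1/2}B_m},\varphi\rangle=\frac{1}{2\pi}\int_{\C}\Delta\varphi(z)\,f_m(z)\,dz,\qquad f_m(z):=\frac1m\log\left|\frac{\det(m^{-1/2}A_m-zI)}{\det(m^{-1/2}B_m-zI)}\right|,
\]
with the integral supported on the compact set $K:=\mathrm{supp}\,\varphi$. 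The plan is thus: (i) show $f_m\to 0$ in $L^1(K)$ almost surely, for each fixed $\varphi$; (ii) upgrade this to vague convergence of the difference of the ESDs to the zero measure, almost surely.

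For (i): hypothesis (2) combined with Tonelli's theorem gives that, almost surely, $f_m(z)\to 0$ for Lebesgue-a.e.\ $z\in K$; it remains to establish a uniform-integrability bound on $\{f_m\}$ over $K$, and this is where hypothesis (1) enters. Bound $|f_m|\le g_m+h_m$, where $g_m$ gathers the positive parts $\log^+|z-\lambda_i(\cdot)|$ and $h_m$ the negative parts $\log^-|z-\lambda_i(\cdot)|$ of the individual logarithmic terms, averaged over the eigenvalues of $m^{-1/2}A_m$ and of $m^{-1/2}B_m$. For $g_m$: on $K$ one has $|z|\le R_K$ and $\log^+|z-\lambda_i|\le\log(1+R_K)+\log(1+|\lambda_i|)$, so averaging, using concavity of the logarithm, and invoking Schur's inequality $\sum_i|\lambda_i(A_m)|^2\le\|A_m\|_F^2$ (and the analogue for $B_m$) bounds $\sup_{z\in K}|g_m(z)|$ by a constant times $1+\log\big(1+m^{-2}(\|A_m\|_F^2+\|B_m\|_F^2)\big)$, which is bounded almost surely by hypothesis (1). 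For $h_m$: since $\log^-|z-w|$ is supported in $|z-w|<1$ and $\int_{|u|<1}(\log|u|)^2\,du<\infty$ independently of $w$, the power-mean inequality yields $\int_K h_m(z)^2\,dz\le c_0$ for all $m$, with $c_0$ a universal constant. Thus, almost surely, $\{f_m\}$ is dominated on $K$ by a fixed constant plus a sequence bounded in $L^2(K)$, hence uniformly integrable on the finite-measure set $K$; together with the a.e.\ convergence to $0$, Vitali's convergence theorem gives $\|f_m\|_{L^1(K)}\to 0$ a.s., and therefore $\langle \mu_{m^{-1/2}A_m}-\mu_{m^{-1/2}B_m},\varphi\rangle\to 0$ a.s.

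For (ii): fix a countable family $\{\varphi_\ell\}\subset C_c^\infty(\C)$ dense in $C_c(\C)$ in the sense that every $\varphi\in C_c(\C)$ is a uniform limit of some $\varphi_\ell$ with support in a fixed neighbourhood of $\mathrm{supp}\,\varphi$. By step (i) and a countable union, almost surely $\langle \mu_{m^{-1/2}A_m}-\mu_{m^{-1/2}B_m},\varphi_\ell\rangle\to 0$ for all $\ell$ simultaneously; since the ESDs are probability measures, $|\langle \mu_{m^{-1/2}A_m}-\mu_{m^{-1/2}B_m},\psi\rangle|\le 2\|\psi\|_\infty$, so the approximation argument extends the convergence to every $\varphi\in C_c(\C)$, which is exactly $\mu_{m^{-1/2}A_m}-\mu_{m^{-1/2}B_m}\xrightarrow{a.s.}0$ in the vague topology. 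The main obstacle is the uniform-integrability estimate in step (i): one must rule out that the negative logarithmic singularities of $f_m$ — occurring where eigenvalues of $m^{-1/2}A_m$ or $m^{-1/2}B_m$ cluster inside $K$ — contribute non-vanishing mass, which is precisely what the $L^2(K)$ bound on $h_m$ achieves, while hypothesis (1) is exactly what controls the positive part $g_m$ (equivalently, it prevents mass escaping to infinity, yielding also a.s.\ tightness of the two families). A secondary, purely technical point is the careful handling of the order of the ``almost surely'' and ``almost every $z$'' quantifiers via Tonelli.
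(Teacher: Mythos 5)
The paper does not prove this statement: Theorem \ref{thm:replacement} is imported verbatim from the cited reference of Tao, Vu and Krishnapur and is used as a black box, so there is no internal proof to compare against. Your sketch is, in substance, the argument of the original source (Girko's hermitization/logarithmic-potential method): the distributional identity $\frac{1}{2\pi}\Delta\big(\frac1m\log|\det(M-zI)|\big)=\mu_M$, the Tonelli swap of the ``a.s.'' and ``a.e.\ $z$'' quantifiers, control of the positive logarithmic part via Schur's inequality and hypothesis (1), the uniform $L^2(K)$ bound on the negative part giving uniform integrability, Vitali's theorem, and a countable dense family of test functions. All of these steps are correct as stated, so the proposal is a faithful and essentially complete reconstruction of the known proof rather than a new route.
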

	
	\begin{remark}
		The random variable $\mu_{\frac 1{\sqrt m}A_m} - \mu_{\frac 1{\sqrt m}B_m}$ takes values in the space of signed measures on $\f C$ with  total variation bounded by $2$. 
	\end{remark}
	
	Thanks to the replacement principle, we will be able to generalize a well-known result on random Gaussian matrices to the case of monic Gaussian matrix polynomials.

	\begin{theorem}[Strong Circle Law \cite{MehtaBook}]\label{thm:Circle}
		Let $A_m$ 
		be the $\sqsize{m}$  random matrix whose entries
		are iid Gaussian random variables with mean 0 and variance 1. Then
		the ESDs of 
		$\frac 1{\sqrt m}  A_m$
		converges almost surely to the uniform distribution on the unit disc.
	\end{theorem}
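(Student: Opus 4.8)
This is a classical fact, so I will only outline the argument I would use, specialising to the complex case (so that the entries of $A_m$ are standard complex Gaussians, $\Expect|(A_m)_{ij}|^2 = 1$), which is the one needed below. The engine is the explicit description of the eigenvalues of $A_m$: by Ginibre's formula the eigenvalues $\lambda_1,\dots,\lambda_m$ form a determinantal point process on $\C$ with projection correlation kernel $K_m$, whose diagonal is the one-point intensity $\rho_m(z) = K_m(z,z) = \frac1\pi e^{-|z|^2}\sum_{\ell=0}^{m-1}\frac{|z|^{2\ell}}{\ell!}$ with respect to Lebesgue measure. Rescaling by $m^{-1/2}$, the expected ESD of $m^{-1/2}A_m$ has density $\widetilde\rho_m(w) = \rho_m(\sqrt m\, w) = \frac1\pi e^{-m|w|^2}\sum_{\ell=0}^{m-1}\frac{(m|w|^2)^\ell}{\ell!}$, which one checks is a genuine probability density on $\C$. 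Now $e^{-\mu}\sum_{\ell=0}^{m-1}\mu^\ell/\ell! = \Prob(N_\mu\le m-1)$ for $N_\mu\sim\mathrm{Poisson}(\mu)$; taking $\mu = m|w|^2$ and using the concentration of a Poisson variable about its mean, this probability tends to $1$ when $|w|<1$ and to $0$ when $|w|>1$. Hence $\widetilde\rho_m \to \frac1\pi\mathbf 1_{\{|w|<1\}}$ pointwise off the circle $|w|=1$, and since both are probability densities, Scheffé's lemma gives convergence in $L^1(\C)$; thus $\Expect[\mu_{m^{-1/2}A_m}] \to \mu_{\mathrm{circ}}$ in the weak topology, where $\mu_{\mathrm{circ}}$ is the uniform law on the unit disc.

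The remaining --- and genuinely delicate --- task is to upgrade convergence of expectations to almost sure convergence along the whole sequence. I would fix a test function $f\in C_c^\infty(\C)$, set $L_m := \frac1m\sum_{i=1}^m f(\lambda_i/\sqrt m)$, so that $\Expect L_m \to \int f\,d\mu_{\mathrm{circ}}$ by the previous step, and then control $\mathrm{Var}(L_m)$. Since the eigenvalue process is determinantal with a projection kernel, one has the exact formula $\mathrm{Var}(mL_m) = \frac12\iint_{\C^2}|f(z/\sqrt m) - f(w/\sqrt m)|^2|K_m(z,w)|^2\,\diff z\,\diff w$; because $z\mapsto f(z/\sqrt m)$ has gradient of size $O(m^{-1/2})$ while $K_m$ has Gaussian-type decay at unit scale in the bulk, this integral is $O(1)$ uniformly in $m$, so $\mathrm{Var}(L_m) = O(m^{-2})$. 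Chebyshev's inequality then yields $\Prob(|L_m - \Expect L_m| > \varepsilon) = O(m^{-2}\varepsilon^{-2})$, which is summable in $m$, and Borel--Cantelli gives $L_m\to\int f\,d\mu_{\mathrm{circ}}$ almost surely. Intersecting these almost sure events over a countable subfamily of $C_c^\infty(\C)$ that determines weak convergence of probability measures on $\C$ yields $\mu_{m^{-1/2}A_m}\xrightarrow{a.s.}\mu_{\mathrm{circ}}$, as claimed. The main obstacle is precisely this step: one needs fluctuation control strong enough for the tail probabilities to be summable --- which is why the argument is run against smooth test functions --- rather than the much softer bound that would only give convergence in probability.

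Two remarks on alternatives. For the complex Gaussian ensemble the radial part of the statement has a particularly clean proof via Kostlan's observation that the unordered moduli $\{|\lambda_1|^2,\dots,|\lambda_m|^2\}$ have the same joint law as independent variables $\gamma_i\sim\mathrm{Gamma}(i,1)$: radial linear statistics then become sums of independent terms, for which Hoeffding's inequality (summable tails) and the strong law apply directly, and the angular distribution is pinned down by a similar, easier argument together with the rotational invariance in law of the ESD. More importantly, the route best suited to the generalisations pursued in this paper --- because it avoids the explicit eigenvalue density and hence survives the passage to the structured companion matrix via the Replacement Principle (Theorem~\ref{thm:replacement}) --- is Girko's Hermitization: for a.e.\ $z\in\C$ one studies the Hermitian matrix $H_m(z) := (m^{-1/2}A_m - zI)(m^{-1/2}A_m - zI)^*$, shows that its ESD converges almost surely to a deterministic measure whose logarithmic moment equals the logarithmic potential $\int_\C\log|z-w|\,d\mu_{\mathrm{circ}}(w)$, and concludes via the fact that the logarithmic potential determines a compactly supported probability measure. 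There the difficulty migrates to an almost sure lower bound on the least singular value $\sigma_{\min}(m^{-1/2}A_m - zI)$, needed to tame the singularity of $\log$ at the origin; in the Gaussian case this singular value has an explicit distribution and the required tail bound is routine.
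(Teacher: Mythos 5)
This statement is not proved in the paper at all: it is imported as a black box from the cited reference (Mehta; see also Tao--Vu--Krishnapur for the universal version), and the paper only ever uses it through the Replacement Principle. So there is no ``paper proof'' to match; what you have written is a self-contained outline of the classical argument for the complex Ginibre ensemble, and it is correct. The chain (Ginibre determinantal structure, one-point intensity $\rho_m(z)=\frac1\pi e^{-|z|^2}\sum_{\ell<m}|z|^{2\ell}/\ell!$, Poisson CDF interpretation giving pointwise convergence of the rescaled density to $\frac1\pi\mathbf 1_{|w|<1}$, Scheff\'e, and then the projection-kernel variance identity $\mathrm{Var}(mL_m)=\frac12\iint|f(z/\sqrt m)-f(w/\sqrt m)|^2|K_m(z,w)|^2$ combined with $\iint|z-w|^2|K_m(z,w)|^2=2m$ to get $\mathrm{Var}(L_m)=O(m^{-2})$, Chebyshev, Borel--Cantelli, and a countable separating family of test functions) is exactly the standard route and each step closes; in particular the $O(1)$ bound on the unnormalized variance is genuine (it rests on a cancellation between $\int|z|^2K_m(z,z)$ and the off-diagonal term, both of order $m^2$), so your summability claim holds. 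Two small caveats: the statement as quoted says ``Gaussian'' without specifying real or complex, and your determinantal argument covers only the complex case --- which is, however, the only case the paper uses; and your closing remark is well taken, namely that it is the Hermitization/least-singular-value route, not the explicit Ginibre density, that survives the passage to the companion matrix, which is precisely why the paper's actual work (Sections \ref{sec:n}--\ref{sec:k} and the appendix) consists of singular value estimates fed into Theorem \ref{thm:replacement} rather than any determinantal computation.
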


	\section{Empirical spectral distribution for $n \times n$ monic complex Gaussian matrix polynomials of degree $k$, in the limit $n \rightarrow \infty$}\label{sec:n}
	Let $X$ be a complex random variable, normally distributed with mean $0$ and variance $1$. We consider the $n \times n$ monic matrix polynomial of degree $k \geq 2$
	\begin{equation}\label{eq:Px}
	P_n(x) = I_n x^k + \sum_{j=0}^{k-1} C_j x^j,
	\end{equation}
	where, for $j=0,\dots,k-1$ every coefficient $C_j$ is an $n \times n$ random matrix whose entries are i.i.d. copies of $X$. Note that each $C_j$ depends on $j$ and on $n$, but we omit the dependence on $n$ in the notation. It is intended moreover that all $C_j$ are independent of each other for varying $j$ and $n$. 
	
	The finite eigenvalues of $P_n(x)$ coincide with the eigenvalues of its companion matrix: in particular, substituting $C_k=I_n$ in \eqref{eq:companionmatrix}, we obtain
	\begin{equation}\label{eq:C}
	M:=\begin{bmatrix}
	-C_{k-1} & \dots & -C_1 & -C_0\\
	I_n & & & \\
	& \ddots & & \\
	& & I_n & 
	\end{bmatrix} =: Z + E_1 C^T
	\end{equation}
	where $E_1^T = \begin{bmatrix}
	I_n & 0 & \dots & 0
	\end{bmatrix}$ and $C^T = -\begin{bmatrix}
	C_{k-1} & \dots & C_1 & C_0
	\end{bmatrix}$. Note that the spectrum of the matrix $E_1 C^T$ consists of the eigenvalues of the random matrix $-C_{k-1}$, with the addition of the eigenvalue $0$, which appears with algebraic multiplicity $n(k-1)$. As $C_{k-1}$ is a Gaussian random matrix, the almost sure limit ESD of $n^{-1/2} C_{k-1}$ follows the circular law (\autoref{thm:Circle}), i.e., is distributed with the uniform measure on the unit disc. Hence, the ESD of $n^{-1/2} E_1 C^T$ converges almost surely, in the limit $n \rightarrow \infty$, to $\frac{k-1}{k} {\bf 1}_0 +  \frac1k {\bf 1}_D$, where ${\bf 1}_0$, ${\bf 1}_D$ denote the uniform probability measures on, respectively, the set $\{ 0 \}$ and the unit disc. Since $n^{-1/2}M$ is a perturbation of $n^{-1/2} E_1 C^T$, one can expect that the almost sure limit ESD of $n^{-1/2}M$, and thus 
	the almost sure limit ESD of $P_n(n^{1/2}x)$ coincides with the limit ESD for $n^{-1/2} E_1 C^T$.

	This conjecture is also empirically confirmed by the experiments. For example, in Figure \ref{fig:f1}, we plotted the complex eigenvalues, multiplied by $n^{-1/2}$, of $N$ realization of the polynomial $P_n(x)$ for different values of the triple $(k,n,N)$ under the constraint $knN=c$ for some positive integer $c$ (so that the number of the eigenvalues plotted is the same in every image). We display several subfigures organized as a matrix: the degree of the polynomial is constant on each row (namely $k=6$ for the first row and $k=4$ for the second row), while the columns are characterized by different values of $n$, increasing from left to right. To facilitate the visual comparison with the above claim, we also superimpose the unit circle on each image.

	\begin{figure}[h]
		\makebox[\textwidth][c]{\includegraphics[width=1.1\textwidth]{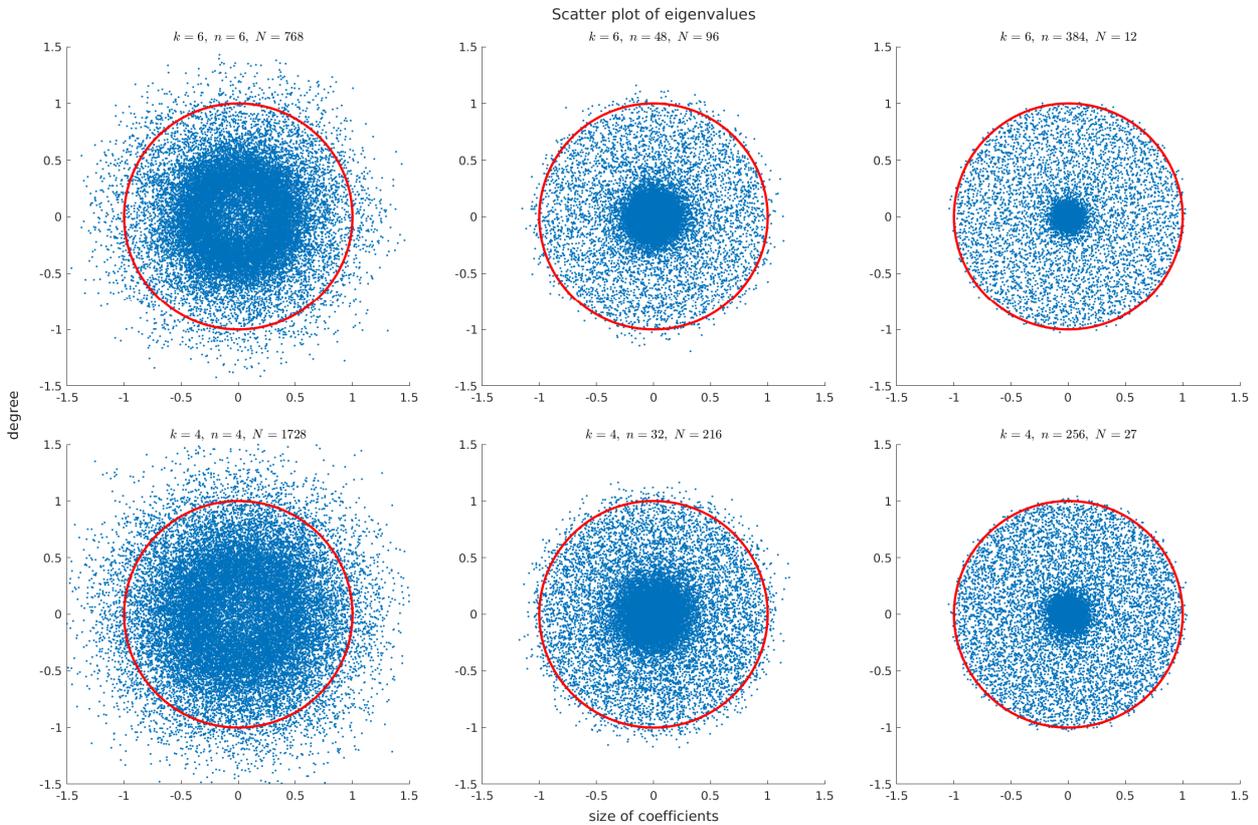}}%
		\caption{Scatter plots of the eigenvalues of $P_n(x)$ for growing $n$, multiplied by $\frac{1}{\sqrt{n}}$.}
		\label{fig:f1}
	\end{figure}
	
	We prove the claim as Theorem \ref{thm:esdmonicninf} below. Its proof relies on several technical lemmata on the behaviour of the singular values of certain matrices: in order to improve the readability of the paper, these are collected in Appendix \ref{sec:appbc} and Appendix \ref{sec:appn}. Since, for $k=1$, we recover the well known limit distribution of the eigenvalues of a Gaussian random matrix, within the proof we tacitly assume that $k \ge 2$.
	
	\begin{theorem}\label{thm:esdmonicninf}
		Let $P_n(x)$ be a monic $n \times n$ complex random matrix polynomial of degree $k$ as in \eqref{eq:Px}, where the entries of each coefficient $C_j$ are i.i.d. complex random variables normally distributed with mean $0$ and variance $1$. Then, for $n \rightarrow \infty$, the empirical spectral distribution of $P_n(n^{1/2}x)$ converges almost surely to
		\[ \frac{k-1}{k} {\bf 1}_0 +  \frac1k {\bf 1}_D,  \]
		where ${\bf 1}_0$, ${\bf 1}_D$ denote the uniform probability measures on, respectively, the set $\{ 0 \}$ and the unit disc.
	\end{theorem}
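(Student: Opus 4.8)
The plan is to identify the ESD of $P_n(n^{1/2}x)$ with that of the rescaled companion matrix $n^{-1/2}M$ of \eqref{eq:C}, and to compare $n^{-1/2}M$ with $n^{-1/2}E_1C^T$ by means of the Replacement Principle (Theorem~\ref{thm:replacement}). First I would compute the limit ESD of $n^{-1/2}E_1C^T$ directly: since $E_1C^T$ has nonzero entries only in its first block row, it is block upper triangular with diagonal blocks $-C_{k-1},0,\dots,0$, whence $\mu_{n^{-1/2}E_1C^T}=\tfrac{k-1}{k}\,{\bf 1}_0+\tfrac1k\,\mu_{n^{-1/2}(-C_{k-1})}$; the Strong Circle Law (Theorem~\ref{thm:Circle}) applied to the standard complex Gaussian matrix $-C_{k-1}$ then gives $\mu_{n^{-1/2}E_1C^T}\xrightarrow{a.s.}\tfrac{k-1}{k}\,{\bf 1}_0+\tfrac1k\,{\bf 1}_D$. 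Then I would set $m=nk$, $A_m=\sqrt{k}\,M$ and $B_m=\sqrt{k}\,E_1C^T$, so that $m^{-1/2}A_m=n^{-1/2}M$ and $m^{-1/2}B_m=n^{-1/2}E_1C^T$; once the two hypotheses of Theorem~\ref{thm:replacement} are verified, we get $\mu_{n^{-1/2}M}-\mu_{n^{-1/2}E_1C^T}\xrightarrow{a.s.}0$, and this, combined with the limit above and with the fact that the ESD of $P_n(n^{1/2}x)$ equals that of $n^{-1/2}M$ (cf.\ the discussion after Definition~\ref{def:ESDP}, together with the observation that the finite eigenvalues of $P_n(n^{1/2}x)$ are those of $M$ divided by $\sqrt n$), yields the theorem.

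Hypothesis~(1) is routine. Since $Z$ and $E_1C^T$ have disjoint supports, $\|M\|_F^2=\|Z\|_F^2+\|E_1C^T\|_F^2=n(k-1)+\sum_{j=0}^{k-1}\|C_j\|_F^2$ and $\|E_1C^T\|_F^2=\sum_{j=0}^{k-1}\|C_j\|_F^2$; as each $\|C_j\|_F^2$ is a sum of $n^2$ i.i.d.\ unit-mean terms, the strong law of large numbers gives $\tfrac1{m^2}\left(\|A_m\|_F^2+\|B_m\|_F^2\right)\xrightarrow{a.s.}2$, which in particular is bounded a.s.

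For hypothesis~(2) I would first use the companion structure to simplify the ratio of characteristic determinants. From the identity $\det(xI_{nk}-M)=\det P_n(x)$ (the characteristic polynomial of the companion matrix) and from $P_n(\sqrt{n}\,z)=n^{k/2}\left(z^kI_n+\sum_{j=0}^{k-1}n^{(j-k)/2}z^jC_j\right)$, one obtains
\[
\det\left(n^{-1/2}M-zI_{nk}\right)=(-1)^{nk}\det\left(z^kI_n+\sum_{j=0}^{k-1}n^{(j-k)/2}z^jC_j\right),
\]
while the block triangular form of $E_1C^T$ gives
\[
\det\left(n^{-1/2}E_1C^T-zI_{nk}\right)=(-1)^{nk}z^{(k-1)n}\det\left(zI_n+n^{-1/2}C_{k-1}\right).
\]
Put $H_n:=zI_n+n^{-1/2}C_{k-1}$ and $G_n:=\sum_{j=0}^{k-2}n^{(j-k)/2}z^{\,j-k+1}C_j$. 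For $z\neq0$ we have $z^kI_n+\sum_{j=0}^{k-1}n^{(j-k)/2}z^jC_j=z^{k-1}(H_n+G_n)$ and $z^{(k-1)n}\det(zI_n+n^{-1/2}C_{k-1})=\det\left(z^{k-1}H_n\right)$, so the ratio appearing in hypothesis~(2) equals $\det(H_n+G_n)/\det H_n$. Here $H_n$ is an $n\times n$ Gaussian matrix with a deterministic shift, while, using the standard bound $\|C_j\|=O(\sqrt{n})$ a.s.\ together with $(j-k)/2\le-1$ for $j\le k-2$, one gets $\|G_n\|=O(n^{-1/2})$ a.s. Hypothesis~(2) thus reduces to proving that, for each fixed $z\neq0$,
\[
\frac1{nk}\sum_{i=1}^{n}\left(\log\sigma_i(H_n+G_n)-\log\sigma_i(H_n)\right)\xrightarrow{a.s.}0 .
\]

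This last statement is the heart of the proof and, I expect, the main obstacle. I would fix $\tau_n=n^{-\gamma}$ with $0<\gamma<\tfrac12$ and split the sum according to whether $\sigma_i(H_n)\ge\tau_n$ or not. For the indices with $\sigma_i(H_n)\ge\tau_n$, Mirsky's perturbation theorem (Theorem~\ref{thm:Perturbation}) gives $|\sigma_i(H_n+G_n)-\sigma_i(H_n)|\le\|G_n\|=O(n^{-1/2})$, so that eventually $\sigma_i(H_n+G_n)\ge\tfrac12\tau_n$ and $|\log\sigma_i(H_n+G_n)-\log\sigma_i(H_n)|=O(n^{\gamma-1/2})$ for each such $i$; summed and divided by $nk$, this part is $O(n^{\gamma-1/2})\to0$. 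For the indices with $\sigma_i(H_n)<\tau_n$ (hence $\sigma_i(H_n+G_n)<2\tau_n$ eventually) I would invoke the two technical lemmata deferred to the appendix: (A) with failure probability summable in $n$, $\sigma_{\min}(H_n)\ge n^{-C}$ and $\sigma_{\min}(H_n+G_n)\ge n^{-C}$ for a suitable constant $C$ depending on $z$ and $k$ --- this follows from the extension of the B\"urgisser--Cucker tail bound for the norm of the pseudoinverse of a nonzero-mean Gaussian matrix, applied to $H_n=n^{-1/2}(C_{k-1}+\sqrt{n}\,zI_n)$ and to the analogously shifted matrix $H_n+G_n$; and (B) with failure probability summable in $n$, $\#\{i:\sigma_i(H_n)<2\tau_n\}\le n^{1-\delta}$ for some $\delta>0$, which follows from the estimates on the extremal singular values of these shifted random matrices. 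Granting (A) and (B), each of the at most $O(n^{1-\delta})$ summands in the second group is $O(\log n)$ in absolute value, so that part is $O(n^{-\delta}\log n)\to0$; a Borel--Cantelli argument converts the ``summable failure probability'' statements into almost sure ones. Combining the two groups establishes hypothesis~(2), and together with the first paragraph this proves the theorem. The crux is the control of the small and intermediate singular values of the shifted Gaussian matrices $H_n$ and $H_n+G_n$: it is only this that prevents the logarithmic singularity at $0$ from destroying the estimate, and it is precisely for this reason that the auxiliary singular-value bounds of the appendix are needed.
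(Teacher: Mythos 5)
Your proposal is correct and shares the paper's skeleton (identify the ESD of $P_n(n^{1/2}x)$ with that of $n^{-1/2}M$, compute the limit ESD of $n^{-1/2}E_1C^T$ via the block-triangular structure and the circular law, and compare the two via the Replacement Principle), but you verify hypothesis~(2) by a genuinely different route. The paper works throughout with the $kn\times kn$ matrices $n^{-1/2}M-zI$ and $n^{-1/2}E_1C^T-zI$, viewing the first as a norm-one perturbation of the second, splitting the sum of log-singular-value differences by index at $f(n)=\lfloor kn-n^{1-\delta}\rfloor$, and controlling the pieces with four appendix lemmata (lower bounds on $\sigma_{kn}$ of both matrices via Woodbury plus Tao--Vu, an upper bound on $\sigma_1$, and a lower bound of order $n^{\ve-1/2}$ on $\sigma_{f(n)}(n^{-1/2}E_1C^T-zI)$ via interlacing and the rectangular pseudoinverse tail bound). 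You instead use the companion-matrix identity $\det(xI-M)=\det P_n(x)$ and the block-triangularity of $E_1C^T-zI$ to collapse the ratio of $kn\times kn$ determinants to $\det(H_n+G_n)/\det(H_n)$ with $H_n,G_n$ of size $n\times n$ and $\|G_n\|=O(n^{-1/2})$, and then split by a singular-value threshold $\tau_n=n^{-\gamma}$ rather than by index. This buys a real simplification: the perturbation shrinks from norm $1$ to norm $O(n^{-1/2})$, and the objects whose spectra must be controlled are plain shifted Gaussian matrices (so your ingredient (A) follows directly from Theorem~\ref{thm:least_sv_bound} or the square case of Theorem~\ref{thm:normpseudoinverse}, with none of the Woodbury manipulations of Lemmas~\ref{lem:smallestsingM} and~\ref{lem:smallestsingE1CT}). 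What the paper's organization buys is that its intermediate results are stated for the structured $kn\times kn$ matrices themselves, which is what generalizes to the non-monic and $k\to\infty$ settings.

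One caveat: your ingredient (B) --- that with summable failure probability at most $n^{1-\delta}$ singular values of $H_n$ fall below $2\tau_n$ --- does not literally follow from any lemma stated in the appendix, which all concern the $kn\times kn$ matrices; it is the genuine analogue of Lemma~\ref{lem:sigmafnE1CT} and you would have to prove it. It does go through by the same technique: by interlacing (Theorem~\ref{cor:Interlacing}), $\sigma_{n-j}(H_n)$ is bounded below by the least singular value of the $(n-j)\times n$ submatrix of its first $n-j$ rows, and Theorem~\ref{thm:normpseudoinverse} applied to that fat rectangular nonzero-mean Gaussian matrix with $j\approx n^{1-\delta}$ and $\tau=2n^{-\gamma}$ gives an exponentially small failure probability provided $\gamma>\delta$. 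With that supplied, your argument is complete.
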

	
	\begin{proof}
		The strategy of the proof is to apply the Replacement Principle (Theorem \ref{thm:replacement}) in the special case where $m=kn$, $A_m =  M$ and $B_m =  E_1 C^T$, where $M,E_1,C$ are the matrices defined in \eqref{eq:C} and immediately below. Indeed, by the observations above, this immediately implies the statement. Thus, we need to verify that the two assumptions of Theorem \ref{thm:replacement} hold.
		\begin{enumerate}
			\item  Consider the random variable \[R_n = \frac{1}{k^2 n^2} \sum_{i=1}^{2kn^2} |X_i|^2,\] where $X_i$ are i.i.d. normally distributed complex random variables with mean $0$ and variance $1$. The $X_i$ depend also on $n$, and it is intended that all $X_i$ are i.i.d. for varying $i$ and $n$.
			Since 
			$\frac{1}{m^2} \left( \| A_m \|_F^2 + \| B_m \|_F^2 \right)$  has the same distribution as $R_n + \frac{k-1}{k^2n}$,
			it suffices to prove that $R_n$ is bounded almost surely. This is tantamount to $\Prob( \limsup_n R_n < \infty) =1$. On the other hand, by the strong law of large numbers, \[  \Prob\left( \lim_n \frac{1}{2 k n^2} \sum_{i=1}^{2kn^2} |X_i|^2 = \Expect [|X_i|^2] =1\right)=1;  \]
			it follows that  $\Prob\left( \limsup_n R_n < \infty \right) \ge \Prob \left(\limsup_n R_n = \frac2k\right)=1 .$
			\item Fix a nonzero complex number $w \neq 0$.  We need to verify that, for almost every $w$,
			\[   \frac{1}{kn} \left(  \log \left| \det   \left(  \frac{1}{\sqrt{kn}} E_1 C^T - wI  \right) \right|   - \log \left| \det  \left(  \frac{1}{\sqrt{kn}} M - wI  \right) \right|   \right) \xrightarrow{a.s.} 0. \]
			Defining $z:=w \sqrt{k}$ we readily see that this is equivalent to showing
			\begin{equation}\label{eq:eq1}
			\frac1n \sum_{i=1}^{kn} \left[ \log \sigma_i \left( \frac{1}{\sqrt{n}} E_1 C^T - z I  \right)  - \log \sigma_i \left( \frac{1}{\sqrt{n}}  M - z I  \right) \right] \xrightarrow{a.s.} 0
			\end{equation}   
			for every $z\ne 0$. Now let $0<\delta<1/2$ and set $f(n):=\lfloor kn-n^{1-\delta}
			\rfloor$. Observe that,
			for any $n$ large enough, $kn > f(n) > kn-n$. Rather than verifying \eqref{eq:eq1} directly, we will prove a somewhat stronger statement. Indeed, we claim that the following three facts all hold:
			\begin{equation}\label{eq:eq2}
			\frac{1}{n} \sum_{i=f(n)+1}^{kn}  \log \sigma_i \left( \frac{1}{\sqrt{n}}  M - z I  \right)  \xrightarrow{a.s.} 0. 
			\end{equation}
			\begin{equation}\label{eq:eq3}
			\frac{1}{n} \sum_{i=f(n)+1}^{kn}  \log \sigma_i \left( \frac{1}{\sqrt{n}} E_1 C^T - z I  \right)   \xrightarrow{a.s.} 0. 
			\end{equation}
			\begin{equation}\label{eq:eq4}
			\frac{1}{n} \sum_{i=1}^{f(n)} \left[ \log \sigma_i \left( \frac{1}{\sqrt{n}} E_1 C^T - z I  \right)  - \log \sigma_i \left( \frac{1}{\sqrt{n}} M - z I  \right) \right] \xrightarrow{a.s.} 0. 
			\end{equation}
			It is clear that \eqref{eq:eq2}, \eqref{eq:eq3} and \eqref{eq:eq4}, together, imply \eqref{eq:eq1}. It now remains to prove each statement separately.
			\begin{itemize}
				\item \emph{Proof of \eqref{eq:eq2}.} By Lemma \ref{lem:smallestsingM} and Lemma \ref{lem:spnAB}, 
				almost surely, for all $n$ sufficiently large, the following is true:
				\begin{align*}
				\sum_{i=f(n)+1}^{kn}\log
				\sigma_i\left( 
				\frac{1}{\sqrt {n}}
				M -zI
				\right)&\ge 
				\sum_{i=f(n)+1}^{kn}\log(
				n^{-a-2})\ge 
				(n^{1-\delta}+1)(-a-2) \log(n),\\
				\sum_{i=f(n)+1}^{kn}\log
				\sigma_i\left( 
				\frac{1}{\sqrt {n}}
				M -zI
				\right)
				&\le 
				\sum_{i=f(n)+1}^{kn}\log(d)
				\le (n^{1-\delta}+1)\log(d),
				\end{align*}
				where $a$ and $d$ are the positive constants appearing in Lemma \ref{lem:smallestsingM} and Lemma \ref{lem:spnAB}, and $d$ can be chosen greater than $1$. Hence, dividing by $n$,
				\begin{equation}\label{eq:thesisM_pr}
				(n^{-\delta} + n^{-1}) (-a-2) \log(n)
				\le \frac1n
				\sum_{i=f(n)+1}^{kn}\log
				\sigma_i\left( 
				\frac{1}{\sqrt {n}}
				M -zI
				\right)
				\le 
				(n^{-\delta} + n^{-1}) \log(d).
				\end{equation}
				Thus, \eqref{eq:eq2} follows by the sandwich rule.
				\item \emph{Proof of \eqref{eq:eq3}.} By Lemma \ref{lem:smallestsingE1CT} and Lemma \ref{lem:spnAB}, there are positive constants $\wt a$ and $d>1$ such that almost surely, for all $n$ sufficiently large,
				\begin{align*}
				\sum_{i=f(n)+1}^{kn}\log
				\sigma_i\left( 
				\frac{1}{\sqrt {n}}
				E_1C^T -zI
				\right) &\ge \sum_{i=f(n)+1}^{kn}\log(
				n^{-\wt a-2})\ge (n^{1-\delta}+1)(-\wt a-2) \log(n),\\
				\sum_{i=f(n)+1}^{kn}\log
				\sigma_i\left( 
				\frac{1}{\sqrt {n}}
				E_1C^T -zI
				\right) &
				\le 
				\sum_{i=f(n)+1}^{kn}\log(d)
				\le (n^{1-\delta}+1)\log(d).
				\end{align*}
				The latter inequalities imply
				\begin{equation}\label{eq:thesisE1CT_pr}
				(n^{-\delta} + n^{-1}) (-\wt a-2) \log(n)
				\le \frac1n
				\sum_{i=f(n)+1}^{kn}\log
				\sigma_i\left( 
				\frac{1}{\sqrt {n}}
				E_1C^T -zI
				\right)
				\le 
				(n^{-\delta} + n^{-1}) \log(d).
				\end{equation}
				yielding in turn \eqref{eq:eq3} via the sandwich rule.
				\item \emph{Proof of \eqref{eq:eq4}.}
				We start by the algebraic manipulation
				\[
				\frac 1{n} \sum_{i=1}^{f(n)} \left[\log
				\sigma_i\left( 
				\frac{1}{\sqrt {n}}
				E_1 C^T -zI
				\right)
				-
				\log \sigma_i\left( 
				\frac{1}{\sqrt {n}}
				M -zI
				\right)
				\right]
				=
				-\frac 1{n} \sum_{i=1}^{f(n)} \left[
				\log
				\frac{\sigma_i\left( 
					\frac{1}{\sqrt {n}}
					M -zI
					\right)}{\sigma_i\left( 
					\frac{1}{\sqrt {n}}
					E_1 C^T -zI
					\right)}
				\right].
				\]
				Thanks to Mirsky's Theorem (\autoref{thm:Perturbation}), we know that, for every $i$,
				\[
				\left|\sigma_i\left( 
				\frac{1}{\sqrt {n}}
				M -zI
				\right)
				-
				\sigma_i\left( 
				\frac{1}{\sqrt {n}}
				E_1 C^T -zI
				\right)\right|
				\le \frac{1}{\sqrt {n}}\| M - E_1 C^T\| =  \frac{1}{\sqrt {n}}
				\]
				so, for $i=1,\dots,f(n)$ there exist $d_i$ satisfying $|d_i| \le \frac{1}{\sqrt {n}}$ and such that
				\[
				\sigma_i\left( 
				\frac{1}{\sqrt {n}}
				M -zI
				\right)
				=
				\sigma_i\left( 
				\frac{1}{\sqrt {n}}
				E_1 C^T -zI
				\right)
				+d_i.
				\]
				Thus,
				\begin{equation}\label{eq:manip}
				\left|\frac 1{n} \sum_{i=1}^{f(n)} \left[\log
				\sigma_i\left( 
				\frac{1}{\sqrt {n}} 
				E_1 C^T -zI
				\right)
				-
				\log \sigma_i\left( 
				\frac{1}{\sqrt {n}}
				M -zI
				\right)
				\right]\right|
				\le 
				\frac 1{n} \sum_{i=1}^{f(n)}
				\left|
				\log\left(
				1 + 
				\frac{d_i}{\sigma_i\left( 
					\frac{1}{\sqrt {n}}
					E_1 C^T -zI
					\right)}
				\right)
				\right|
				\end{equation}
				Observe now that, using Lemma \ref{lem:sigmafnE1CT}, we have that, for some positive constants $t,\ve$, almost surely, for all $n$ sufficiently large and for every $i\le f(n)$, 
				\[ |x| := \left|\frac{d_i}{\sigma_i(n^{-1/2}E_1 C^T-zI)}\right|
				\le 
				\left|\frac{n^{-1/2}}{\sigma_{f(n)}(n^{-1/2}E_1 C^T-zI)}\right|
				\leq t^{-1}n^{-\ve} .  \]
				For sufficiently large $n$ (i.e. $n > t^{-1/\ve}$), the right hand side of the latter inequality is bounded above by $1$. Noting that $|x| < 1 \Leftrightarrow |\log(1+x)| \leq - \log(1-|x|)$, we obtain the following upper bound for the right hand side of \eqref{eq:manip}:
				\[
				0\le 
				-
				\frac 1{n} \sum_{i=1}^{f(n)}
				\log \left(
				1 -
				\frac{|d_i|}{\sigma_i\left( 
					\frac{1}{\sqrt {n}}
					E_1 C^T -zI
					\right)}
				\right)
				\le 
				-
				\frac {f(n)}{n} 
				\log\left(
				1 -
				\frac{n^{-1/2}}{\sigma_{f(n)}\left( 
					\frac{1}{\sqrt {n}}
					E_1 C^T -zI
					\right)}
				\right) \]
				which in turn is bounded above by 
				\[-
				k
				\log\left(
				1 -
				\frac{n^{-1/2}}{\sigma_{f(n)}\left( 
					\frac{1}{\sqrt {n}}
					E_1 C^T -zI
					\right)}
				\right).
				\]
				Invoking again Lemma \ref{lem:sigmafnE1CT}, we have that almost surely
				\[
				0\le
				\frac{n^{-1/2}}{\sigma_{f(n)}\left( 
					\frac{1}{\sqrt {n}}
					E_1 C^T -zI
					\right)}
				\le t^{-1}n^{-\ve}\rightarrow 0
				\]
				\[
				\implies
				-
				k
				\log\left(
				1 -
				\frac{n^{-1/2}}{\sigma_{f(n)}\left( 
					\frac{1}{\sqrt {n}}
					E_1 C^T -zI
					\right)}
				\right) 
				\xrightarrow{a.s.} 0,
				\]
				and this concludes the proof.
			\end{itemize}
		\end{enumerate}
	\end{proof}
	
	\begin{remark}
		The relations \eqref{eq:eq2} and \eqref{eq:eq3} still hold if the entries of $C_i$ are i.i.d.\ copies of any centered random variable with unit variance, using slight variations of the reported results. 
	\end{remark}

	\section{Empirical spectral distribution for  $n \times n$ monic complex Gaussian matrix polynomials of degree $k$ in the limit $k \rightarrow \infty$}\label{sec:k}
	Consider again\footnote{The slight notational change with respect to \eqref{eq:Px} is just to emphasize that here we will let $k \rightarrow \infty$ rather than $n \rightarrow \infty$.} the monic matrix polynomial
	\begin{equation}\label{eq:Px2}
	P_k(x) = I_n x^k + \sum_{j=0}^{k-1} C_j x^j,
	\end{equation}
	so that for all $j=0,\dots,k-1$ every coefficient $C_j$ is a $n\times n$ random matrix where all the entries are i.i.d.\ Gaussian complex random variables with mean zero and variance 1. 
	Note that each $C_j$ depends on $j$ and on $k$, but we omit the dependence on $k$ in the notation. It is intended moreover that all $C_j$ are independent of each other for varying $j$ and $k$. 
	
	The finite eigenvalues of $P_k(x)$ coincide with those of its companion matrix $M$ as in \eqref{eq:C}. However, this time we decompose $M$ as the sum of a deterministic circulant matrix and a random matrix with rank at most $n$
	\begin{equation}\label{eq:matrix_split_k}
	M= B + A,\qquad 
	B = 
	\begin{bmatrix}
	&  &  & I_n\\
	I_n & & & \\
	& \ddots & & \\
	& & I_n & 
	\end{bmatrix},\qquad A = \begin{bmatrix}
	-C_{k-1} & \dots & -C_1 & -(C_0+I_n)\\
	\phantom{I_n} & & & \\
	& \phantom{\ddots}  & & \\
	& & \phantom{I_n}  & 
	\end{bmatrix} = E_1 \wh C^T,
	\end{equation}
	where $\wh C^T = C^T - e_k^T\otimes I_n$,  $E_1^T = \begin{bmatrix}
	I_n & 0 & \dots & 0
	\end{bmatrix}$ and $C^T = -\begin{bmatrix}
	C_{k-1} & \dots & C_1 & C_0
	\end{bmatrix}$.
	In particular, $B$ is a circulant matrix \cite{DavisBook}, with spectrum
	\[
	\Lambda(B) = \set{ \exp( 2\pi\textnormal ij/k  ) | j=0,1,\dots,k-1  }
	\]
	where each eigenvalue has multiplicity $n$. It is thus easy to see that the almost sure limit ESD of $B$ is the uniform (singular) probability measure on the unit circle $\bf 1_U$. 
	The problem can be seen through the lens of the theory of perturbations for Toeplitz matrices and sequences (see for example \cite{BPZ19,VZ21}), but since the perturbation $E_1\hat C^T$ is a rank $n$ correction to the Teoplitz matrix $B$, this case does not fall in the classical settings, where it is required that the random perturbation is not singular with high probability. 
	Since the rank of the perturbation is small when compared with the growing size $kn$ of the matrices, we may anyway expect that the ESD of $M$ also converge almost surely to the same distribution $\bf 1_U$. 
	
	This claim is also empirically supported by the experiments. For example, in Figure \ref{fig:f2} we plot the complex eigenvalues of $N$ realization of the polynomial $P_k(x)$ for different values of the triple $(k,n,N)$. The interpretation of Figure \ref{fig:f2} is the same as Figure \ref{fig:f1} after swapping the roles of the matrix sizes and the degrees: we fix $n$ on each row (to $6$ and $4$ respectively) and we increase the degree of the polynomial on the columns. The unit circle is drawn on top of each scatter plot to make easier the comparison with the claim above; $N$ is  always chosen so that the number of eigenvalues plotted, equal to $knN$, is the same in every image.
	
	\begin{figure}[h]
		\makebox[\textwidth][c]{\includegraphics[width=1.1\textwidth]{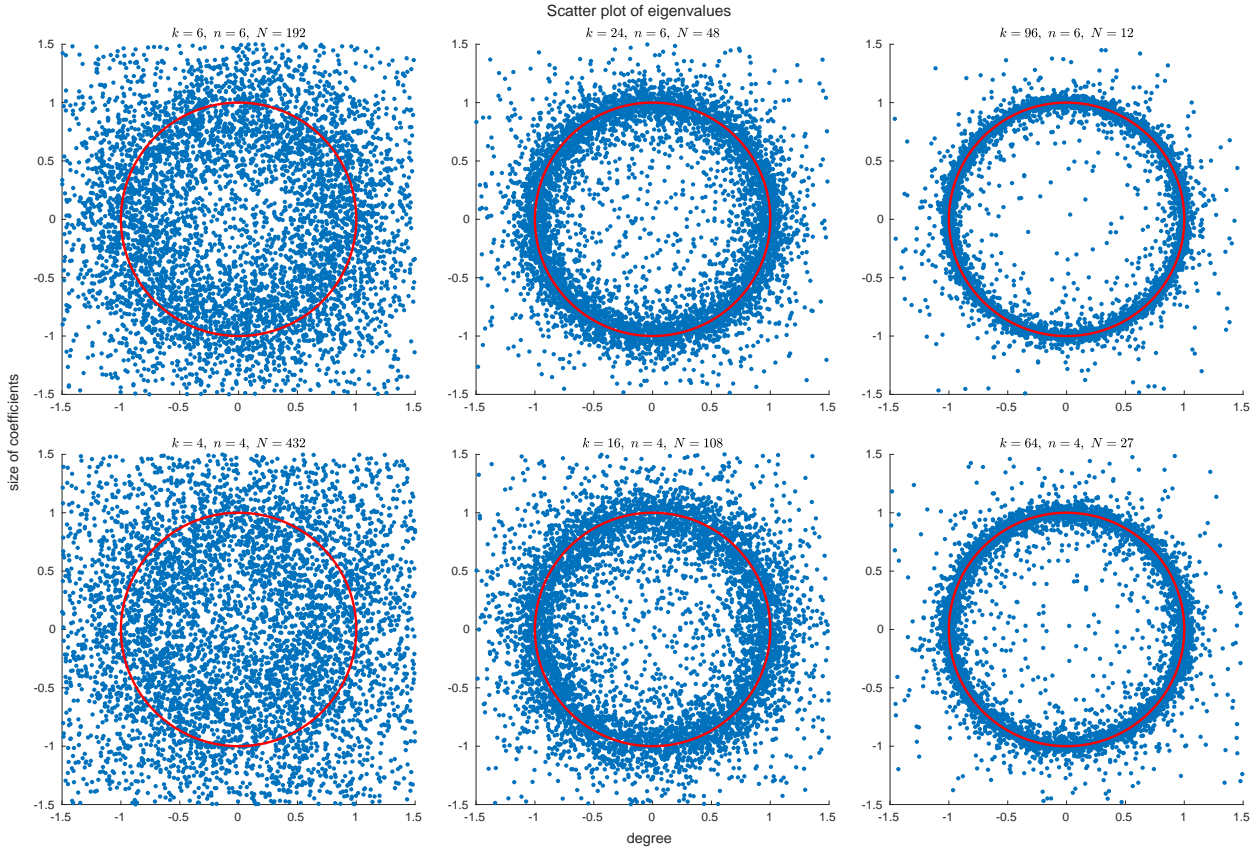}}%
		\caption{Scatter plots of the eigenvalues of $P_k(x)$ for growing $k$.}
		\label{fig:f2}
	\end{figure}
	
	In Theorem \ref{thm:esdmonicninf} below, we thus show that the ESD of  $M$ also converge almost surely to $\bf 1_U$. Note that the statement includes, as a special case when $n=1$, the well known limit distribution of random scalar polynomials, for which we thus provide a novel proof. For the sake of a clearer exposition, below we focus on the major lines of thought that lead to the proof, postponing to Appendix \ref{sec:appk} a more detailed analysis of some technicalities that appear as intermediate steps.
	
	\begin{theorem}\label{thm:esdmonicnk}
		Let $P_k(x)$ be a monic $n \times n$ complex random matrix polynomial of degree $k$ as in \eqref{eq:Px}, where the entries of each coefficient $C_j$ are i.i.d. complex random variables normally distributed with mean $0$ and variance $1$. Then, for $k \rightarrow \infty$, the empirical spectral distribution of $P_k(x)$ converges almost surely to $\bf 1_U$,
		the uniform probability measure on the unit circumference.
	\end{theorem}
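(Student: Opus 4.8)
The plan is to invoke the Replacement Principle (\autoref{thm:replacement}) with $m=kn$, $A_m=\sqrt{kn}\,M$ and $B_m=\sqrt{kn}\,B$, where $M=B+A$ is the splitting in \eqref{eq:matrix_split_k}, so that $m^{-1/2}A_m=M$ and $m^{-1/2}B_m=B$. Writing $B=\mathcal P_k\otimes I_n$ with $\mathcal P_k$ the $k\times k$ cyclic shift, the eigenvalues of $B$ are the $k$-th roots of unity, each with multiplicity $n$, so $\mu_B=\tfrac1k\sum_{j=0}^{k-1}\delta_{\exp(2\pi\mathrm ij/k)}$ converges, as a Riemann sum, to ${\bf 1}_U$. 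Hence, once the two hypotheses of \autoref{thm:replacement} are verified, its conclusion $\mu_M-\mu_B\xrightarrow{a.s.}0$, together with $\mu_B\to{\bf 1}_U$ and the identity $\mu_{P_k}=\mu_M$ (Definition \ref{def:ESDP} and the remark following it), yields $\mu_{P_k}\xrightarrow{a.s.}{\bf 1}_U$. The key structural feature that distinguishes this argument from that of Theorem \ref{thm:esdmonicninf} is that $M$ is the companion matrix of the \emph{monic} matrix polynomial $P_k$, so $|\det(M-zI)|=|\det P_k(z)|$ is the modulus of the determinant of a \emph{small}, $n\times n$ random matrix, which can be estimated directly rather than through interlacing of singular values.

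The first hypothesis is essentially bookkeeping: $\tfrac{1}{m^2}\bigl(\|A_m\|_F^2+\|B_m\|_F^2\bigr)=\tfrac{1}{kn}\bigl(\|M\|_F^2+\|B\|_F^2\bigr)$, with $\|B\|_F^2=kn$ and $\|M\|_F^2=\sum_{j=0}^{k-1}\|C_j\|_F^2+(k-1)n$. Since the coefficients $C_0,\dots,C_{k-1}$ form a triangular array and not a fixed sequence, I would not use the strong law directly; instead a sub-exponential concentration bound gives $\Prob\bigl(\tfrac1k\sum_{j=0}^{k-1}\|C_j\|_F^2>2n^2\bigr)\le e^{-ck}$, which is summable in $k$, so Borel--Cantelli shows that $\tfrac{1}{kn}\|M\|_F^2$ is almost surely bounded (in fact it converges a.s.\ to $n+1$).

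For the second hypothesis I must show that, for a.e.\ $z\in\C$, $\tfrac{1}{kn}\bigl(\log|\det(M-zI)|-\log|\det(B-zI)|\bigr)\xrightarrow{a.s.}0$. The deterministic term is exact: $|\det(B-zI)|=|\det(\mathcal P_k-zI_k)|^n=|z^k-1|^n$, so $\tfrac{1}{kn}\log|\det(B-zI)|=\tfrac1k\log|z^k-1|$, which tends to $0$ when $|z|<1$ and to $\log|z|$ when $|z|>1$; the circle $|z|=1$ is Lebesgue-null and can be ignored. For the random term, when $|z|<1$ I write $P_k(z)=z^kI_n+\sum_{j=0}^{k-1}C_jz^j$, and when $|z|>1$ I factor $P_k(z)=z^k(I_n+R_k)$ with $R_k=\sum_{l=1}^kC_{k-l}z^{-l}$. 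In either case $P_k(z)$ equals a deterministic scalar times $(\mu_kI_n+G_k)$, where $G_k$ is a standard $n\times n$ complex Gaussian matrix, the $\tfrac1k$-th log of the scalar converges to the appropriate deterministic limit ($0$ if $|z|<1$, $\log|z|$ if $|z|>1$), $\mu_k$ is a scalar bounded uniformly in $k$ (and with $\mu_k\to0$ when $|z|<1$), and the entry-variances involved stay in a $k$-independent compact subset of $(0,\infty)$. Thus the whole matter reduces to proving $\tfrac1{kn}\log|\det(\mu_kI_n+G_k)|\xrightarrow{a.s.}0$.

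This final reduction splits into matching upper and lower bounds. For the upper bound, $\log|\det(\mu_kI_n+G_k)|\le n\log(1+|\mu_k|+\|G_k\|)$, and a union bound over the at most $k$ Gaussian blocks appearing in $G_k$, combined with the Gaussian tail $\Prob(\|C_j\|>t)\le c\,e^{-c't^2}$ and Borel--Cantelli, gives $\|G_k\|=O(\sqrt{\log k})$ a.s., so that the contribution is $O\bigl(k^{-1}\log\log k\bigr)\to0$. The lower bound $\log|\det(\mu_kI_n+G_k)|\ge n\log\sigma_{\min}(\mu_kI_n+G_k)$ is the heart of the matter, and here I would invoke the (slightly extended) Bürgisser--Cucker tail estimate for the norm of the pseudoinverse of a Gaussian matrix with nonzero mean, developed in Appendix \ref{sec:appk}: it provides $\Prob\bigl(\sigma_{\min}(\mu_kI_n+G_k)<\ve\bigr)\le C(n)\,\ve$ with a constant $C(n)$ independent of $k$, because the admissible shifts $\mu_k$ and variances range over $k$-independent compact sets. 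Taking $\ve=k^{-2}$ makes these probabilities summable, so Borel--Cantelli gives $\sigma_{\min}(\mu_kI_n+G_k)\ge k^{-2}$ for all large $k$ a.s., whence $\tfrac1{kn}\log|\det(\mu_kI_n+G_k)|\ge-\tfrac{2\log k}{k}\to0$. Combining the upper and lower bounds with the deterministic computation verifies the second hypothesis, and the theorem follows. I expect the main obstacle to be precisely this lower tail bound: one needs an anti-concentration estimate for $\sigma_{\min}$ of a fixed-size shifted complex Gaussian matrix that is at once polynomial in $\ve$ near the origin and has a constant uniform in $k$, so that the triangular-array structure of the coefficients does not obstruct the Borel--Cantelli argument.
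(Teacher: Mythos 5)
Your proposal is correct, and its treatment of the second hypothesis of the replacement principle is genuinely different from the paper's. The paper stays at the level of the $kn\times kn$ matrices: it splits $\sum_i\bigl[\log\sigma_i(M-zI)-\log\sigma_i(B-zI)\bigr]$ into the $n$ largest, the $n$ smallest, and the middle singular values, controls the middle block via the interlacing theorem for rank-$n$ perturbations (Theorem \ref{thm:Interlacing2}) combined with the explicit singular values of the normal circulant $B-zI$, and controls the extremes via Lemmas \ref{lem:greatestsvA} and \ref{lem:smallestsvA} (the latter built on Woodbury's identity and the pseudoinverse tail bound). You instead collapse the entire log-determinant ratio to an $n\times n$ computation through $|\det(M-zI)|=|\det P_k(z)|$, valid because $M$ is the companion linearization of the \emph{monic} $P_k$, leaving only the scalar $|z^k-1|^n$ and the determinant of a single shifted $n\times n$ Gaussian matrix; the sole probabilistic input is then the anti-concentration bound for $\sigma_{\min}$ of a fixed-size shifted Gaussian, which is exactly Theorem \ref{thm:normpseudoinverse} with $N=n$ (restated as \eqref{lsv} inside Lemma \ref{lem:smallestsvA}), whose constant is uniform over the deterministic shift and hence over $k$ --- so the obstacle you flag at the end is already resolved in Appendix \ref{sec:appbc}. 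Your route is shorter and more elementary; the paper's buys robustness, since arguing via singular values and low-rank interlacing does not rely on the exact determinant identity, mirrors the structure of the proof of Theorem \ref{thm:esdmonicninf} (where no such collapse is available), and connects to the Toeplitz perturbation viewpoint mentioned before the statement. Two cosmetic points: the B\"urgisser--Cucker estimate is in Appendix \ref{sec:appbc}, not \ref{sec:appk}; and $G_k$ is itself a single standard $n\times n$ complex Gaussian matrix (a normalized linear combination of the $C_j$), so no union bound over blocks is needed to get $\|G_k\|=O(\sqrt{\log k})$ almost surely.
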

	
	\begin{proof}
		The strategy of the proof follows very closely that of Theorem \ref{thm:esdmonicninf}: we verify that the two assumptions of Theorem \ref{thm:replacement} hold in the special case where $m=kn$, $A_m = \sqrt{kn} \ M$ and $B_m =\sqrt{kn} \ B $, where $M,B$ are the matrices defined in \eqref{eq:matrix_split_k} and immediately below.
		\begin{enumerate}
			\item 
			The first item is treated analogously to Theorem \ref{thm:esdmonicninf}, and we omit the details.
			\item Fix a nonzero complex number $z$ such that $|z|\not\in \set{0,1}$.  We  show that, for  every such $z$,
			\[   \frac{1}{kn} \left(  \log \left| \det   \left(  M - zI  \right) \right|   - \log \left| \det  \left(  B - zI  \right) \right|   \right) \xrightarrow{a.s.} 0, \]
			that is equivalent to
			\begin{equation}\label{eq:2eq1}
			\frac 1{k} \sum_{i=1}^{kn} \log(
			\sigma_i\left( 
			M -zI
			\right))
			-
			\log(\sigma_i\left( 
			B -zI
			\right))
			\xrightarrow{a.s.} 0.
			\end{equation}
			We claim that the following facts are true:
			\begin{equation}\label{eq:thesis_k1}
			\frac 1{k} \sum_{i=1}^{n}\log(
			\sigma_i\left( 
			M -zI
			\right))
			\xrightarrow{a.s.} 0,
			\qquad 
			\frac 1{k} \sum_{i=kn -n+1}^{kn}\log(
			\sigma_i\left( 
			M -zI
			\right))
			\xrightarrow{a.s.} 0.
			\end{equation}
			\begin{equation}\label{eq:thesis_k2}
			\frac 1{k} \sum_{i=1}^{n}\log(
			\sigma_i\left( 
			B -zI
			\right))
			\xrightarrow{a.s.} 0,
			\qquad 
			\frac 1{k} \sum_{i=kn -n+1}^{kn}\log(
			\sigma_i\left( 
			B -zI
			\right))
			\xrightarrow{a.s.} 0.
			\end{equation}
			\begin{equation}\label{eq:thesis_k3}
			\frac 1{k} \sum_{i=n+1}^{kn -n} \log(
			\sigma_i\left( 
			M -zI
			\right))
			-
			\log(\sigma_i\left( 
			B -zI
			\right))
			\xrightarrow{a.s.} 0.
			\end{equation}
			It is clear that \eqref{eq:thesis_k1}, \eqref{eq:thesis_k2} and \eqref{eq:thesis_k3},
			together, imply  \eqref{eq:2eq1}.  It now remains to prove each statement separately.
			\begin{itemize}
				\item \emph{Proof of \eqref{eq:thesis_k1}.}
				By Lemma \ref{lem:greatestsvA}, 
				almost surely, for all $k$ sufficiently large, the following are true almost surely for some positive constant $r$:
				\[
				\sigma_{1}(M-zI)  \le   r\sqrt k + 1 + |z|,
				\qquad 
				\sigma_n(M-zI)\ge|1-|z||.
				\]
				These facts are enough to conclude that
				\[
				\frac 1{k}\sum_{i=1}^{n}|\log(
				\sigma_i\left( 
				M -zI
				\right))|
				\le 
				\frac n{k}
				\max\left\{
				|\log(r\sqrt k + 1 + |z|)|,
				|\log( |1-|z||  )|
				\right\}
				\xrightarrow{a.s.} 0
				\]
				and thus the first a.s. limit in \eqref{eq:thesis_k1} holds. Moreover, by Lemma
				\ref{lem:smallestsvA},
				almost surely, for all $k$ sufficiently large,
				\[ \sigma_{kn} \left( M - z I \right) \ge tk^{-2}  \]
				for some positive constant $t$. Hence, it suffices to estimate
				\[
				\frac 1{k} \sum_{i=kn -n+1}^{kn}|\log(
				\sigma_i\left( 
				M -zI
				\right)) |
				\le 
				\frac{n}{k}\max\{
				|\log(\sigma_1(M -zI))|,
				|\log(\sigma_{kn}(M -zI))|
				\}
				\]
				\[
				\le 
				\frac nk\max\{
				|\log( r\sqrt k + 1 +  |z|)|,
				|\log(tk^{-2})|
				\}\xrightarrow{a.s.} 0.
				\]
				Thus, the second part of \eqref{eq:thesis_k1} also holds.

				\item \emph{Proof of \eqref{eq:thesis_k2}.}
				Observe that $B-zI$ is a circulant matrix, and hence, in particular it is normal. Its spectrum is
				\[
				\Lambda(B-zI) = \set{ \lambda - z | \lambda\in \Lambda(B)}.
				\]
				Since all eigenvalues of $B$ have unitary norm, we can bound the singular values of $B-zI$ as
				\begin{equation}\label{eq:bound_sv_B}
				\sigma_i(B-zI) = |\lambda_i - z|,
				\qquad
				| 1 - |z||  \le |\lambda_i - z| \le 1 + |z|.
				\end{equation}
				Importantly, these bounds do not depend on $k$. As a consequence,
				\[\frac
				{n\log(| 1 - |z|| )}{k}
				\le 
				\frac 1{k} \sum_{i=kn -n+1}^{kn}\log(
				\sigma_i\left( 
				B -zI
				\right)) \le \frac {n\log(1 + |z|)}{k},
				\]
				\[\frac
				{n\log(| 1 - |z|| )}{k} 
				\le 
				\frac 1{k} \sum_{i=1}^{n}\log(
				\sigma_i\left( 
				M -zI
				\right)) \le  \frac {n\log(1 + |z|)}{k},
				\]
				and \eqref{eq:thesis_k2} follows by the sandwich rule.
				\item \emph{Proof of \eqref{eq:thesis_k3}.}
				We start by noting that the statement is implied by
				\[
				\frac 1{k} \sum_{i=n+1}^{kn -n} \left|\log
				\left(
				\frac{\sigma_i\left( 
					M -zI
					\right)}{\sigma_i\left( 
					B -zI
					\right)}
				\right)
				\right|\xrightarrow{a.s.} 0.
				\]
				Assume now that $k>2$. Observe that $M-zI$ is a perturbation of rank at most $n$ of $B-zI$. As a consequence, by \autoref{thm:Interlacing2},  we find that
				\begin{equation}\label{eq:interlacing_AB}
				\sigma_{i+n}(B-zI)\le
				\sigma_i(M-zI) \le \sigma_{i-n}(B-zI)
				\end{equation}
				for every $n<i\le nk-n$. Thus, 
				\begin{equation*}
				\frac 1{k} \sum_{i=n+1}^{kn -n} \left|\log
				\left(
				\frac{\sigma_i\left( 
					M -zI
					\right)}{\sigma_i\left( 
					B -zI
					\right)}
				\right)
				\right|\le
				\frac 1{k} \sum_{i=n+1}^{kn -n}
				\max \left\{
				\left|\log
				\left(
				\frac{\sigma_{i-n}\left( 
					B -zI
					\right)}{\sigma_i\left( 
					B -zI
					\right)}
				\right)
				\right|,
				\left|\log
				\left(
				\frac{\sigma_{i+n}\left( 
					B -zI
					\right)}{\sigma_i\left( 
					B -zI
					\right)}
				\right)
				\right|
				\right\}
				\end{equation*}
				The singular values of $B-zI$ are the moduli of $\lambda_i-z$ where $\lambda_i$ are the eigenvalues of $B$. For the rest of this argument, and for the sake of notational simplicity, let us now drop the dependence on the argument matrix and simply refer to the $r$th singular value of $B-zI$ as $\sigma_r$.  Since all the eigenvalues of $B$ have multiplicity $n$, then
				$ \sigma_{i-n}= |\lambda_j -z|$,  
				$ \sigma_{i} = |\lambda_i -z|$ and $ \sigma_{i+n} = |\lambda_s -z|$, where necessarily $i,j,s$ are pairwise distinct; specifically, $j$ and $s$ are determined by $z$ coherently with the decreasing ordering of the singular values.  We conclude that $\sigma_{i} - \sigma_{i+n}$ and $\sigma_{i-n} - \sigma_{i}$ are both bounded above by
				\[
				\min_{j \neq s \neq i \neq j} \max \{ ||\lambda_i -z| - |\lambda_j -z||, ||\lambda_i -z| - |\lambda_s -z||     \}
				\le 
				\min_{j \neq s \neq i \neq j} \max \{ |\lambda_i -\lambda_j|, |\lambda_i -\lambda_s | .    \}
				\]
				In particular, as $k>2$, we can choose 
				\[
				\lambda_j = \lambda_i \exp(2\pi\textnormal i/k),\qquad \lambda_s = \lambda_i \exp(-2\pi\textnormal i/k),
				\]
				and hence,
				\[
				\min_{j \neq s \neq i \neq j} \max \{ |\lambda_i -\lambda_j|, |\lambda_i -\lambda_s |     \}
				\le 
				|1-\exp(2\pi\textnormal i/k)| = 2\sin(\pi/k).
				\]
				Therefore, for all values of $k$ large enough so that $0<2\sin(\pi/k)<|1-|z||$, we use \eqref{eq:bound_sv_B} to obtain
				\[
				\left|\log
				\left(
				\frac{\sigma_{i+n}}{\sigma_i}
				\right)
				\right|
				=
				- \log\left( 1 - \frac{\sigma_i -\sigma_{i+n}}{\sigma_i}\right)
				\le 
				- \log\left( 1 - 2\frac{\sin(\pi/k)}{|1-|z||}\right),
				\]
				\[
				\left|\log
				\left(
				\frac{\sigma_{i-n}}{\sigma_i}
				\right)
				\right|
				=
				\log
				\left(
				1+
				\frac{\sigma_{i-n} -\sigma_{i}}{\sigma_i}
				\right)
				\le 
				\log\left( 1 + 2\frac{\sin(\pi/k)}{|1-|z||}\right).
				\]
				and, since $0<x<1 \implies -\log(1-x) > \log(1+x)$, we conclude that
				\[
				\frac 1{k} \sum_{i=n+1}^{kn -n}
				\max \left\{
				\left|\log
				\left(
				\frac{\sigma_{i-n}\left( 
					B -zI
					\right)}{\sigma_i\left( 
					B -zI
					\right)}
				\right)
				\right|,
				\left|\log
				\left(
				\frac{\sigma_{i+n}\left( 
					B -zI
					\right)}{\sigma_i\left( 
					B -zI
					\right)}
				\right)
				\right|
				\right\}
				\le 
				-\frac {kn-2n}{k} 
				\log\left( 1 - 2\frac{\sin(\pi/k)}{|1-|z||}\right)
				\]
				that goes to zero as $k\to\infty$, implying \eqref{eq:thesis_k3}.
			\end{itemize}
		\end{enumerate}

	\end{proof}

	\section{Conclusions}\label{sec:concl}
	
	We have rigorously obtained the limit of empirical spectral distribution for monic complex i.i.d. Gaussian matrix polynomials. To our knowledge, and in spite of the relatively common use of random matrix polynomials in the context of numerical experiments to test algorithms for the polynomial eigenvalue problem, the study in the present paper is the first attempt to study analytically the distribution of eigenvalues of a class of random matrix polynomials.
	
	We hope that this work may open the path to further future research on eigenvalues of random matrix polynomials. In particular we believe that it would be of interest to extend our results by considering, for instance, different ways to send $k,n \rightarrow \infty$, non-monic polynomials, coefficients restricted to be real (and/or otherwise structured) and more general distributions of the entries.
	
	In a forthcoming document, the authors will show further progress about proving the convergence of non-monic non-Gaussian polynomial empirical spectral distribution in both cases $n,k\to\infty$.
	
	\subsection*{Acknowledgement}
	
	We acknowledge the computational resources provided by the Aalto Science-IT project.
	This version of the article has been accepted for publication, after peer review, but is not the Version of Record and does not reflect post-acceptance improvements, or any	corrections. The Version of Record is available online at: \href{https://doi.org/10.1007/s10959-022-01163-3 }{https://doi.org/10.1007/s10959-022-01163-3}.
	
	%
	%
	
	\bibliographystyle{abbrv}
	\bibliography{randomthings}
	
	\appendix
	\section{Appendix: Technical results}\label{sec:app}
	In this appendix, we provide the full details on some technical steps that are necessary for our analysis. For convenience, we have split the appendix into various subsections, according to the specific nature of the results contained therein.

	\subsection{Preliminaries and known results}\label{sec:apppre}

	\noindent
	A result we frequently use in our arguments is the following interlacing property of the singular values of submatrices.
	\begin{theorem}[Interlacing Singular Values for Submatrices \cite{THO72}]\label{cor:Interlacing}
		Given any matrix $A$ and any $p\times q$ submatrix $B$,\[
		\|A\| \ge \|B\|, \qquad \sigma_{\min(p,q)}(A) \ge \sigma_{\min} (B).
		\]
	\end{theorem}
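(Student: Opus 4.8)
The plan is to reduce everything to two elementary facts about how a matrix behaves under restriction of its rows, respectively its columns. Write $A \in \C^{m \times n}$, and let $I \subseteq \{1,\dots,m\}$ with $|I| = p$ and $J \subseteq \{1,\dots,n\}$ with $|J| = q$ index the rows and columns of the submatrix $B$. Then $B = R_I\, A\, S_J$, where $R_I \in \C^{p \times m}$ is obtained from $I_m$ by keeping the rows in $I$ (so $R_I R_I^* = I_p$, and $R_I z$ is simply the restriction of $z$ to the coordinates in $I$), and $S_J \in \C^{n \times q}$ is obtained from $I_n$ by keeping the columns in $J$ (so $S_J^* S_J = I_q$, i.e.\ $S_J$ is an isometry of $\C^q$ into $\C^n$). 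In particular $\|R_I\| = \|S_J\| = 1$. The first inequality is then immediate from submultiplicativity of the spectral norm: $\|B\| = \|R_I A S_J\| \le \|R_I\|\,\|A\|\,\|S_J\| = \|A\|$.

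For the second inequality, set $r := \min(p,q)$ and note that $r \le p \le m$ and $r \le q \le n$, so $\sigma_r$ is meaningful for $B$, for $A$, and for the intermediate matrices below. Let $B = U\Sigma W^*$ be a singular value decomposition and let $Q_0 \in \C^{q \times r}$ consist of the $r$ leading right singular vectors of $B$; then $B Q_0$ has columns $\sigma_i(B)\, u_i$ for $i=1,\dots,r$ with the $u_i$ orthonormal, so $\sigma_{\min}(B Q_0) = \sigma_r(B) = \sigma_{\min}(B)$. Now $S_J Q_0 \in \C^{n \times r}$ still has orthonormal columns (because $S_J^*S_J = I_q$), so its column space $V$ is an $r$-dimensional subspace of $\C^n$, and the Courant--Fischer max--min characterization of singular values gives $\sigma_r(A) \ge \min\{\|Av\| : v \in V,\ \|v\|=1\} = \sigma_{\min}(A S_J Q_0)$. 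Finally set $N := A S_J Q_0 \in \C^{m \times r}$; since $m \ge r$ and $p \ge r$, both $\sigma_{\min}(N)$ and $\sigma_{\min}(R_I N)$ equal the infimum of the corresponding norm over unit vectors in $\C^r$, and because $R_I$ merely restricts coordinates one has $\|R_I N x\| \le \|N x\|$ for all $x \in \C^r$, hence $\sigma_{\min}(R_I N) \le \sigma_{\min}(N)$. Chaining these estimates,
\[ \sigma_{\min(p,q)}(A) = \sigma_r(A) \ge \sigma_{\min}(N) \ge \sigma_{\min}(R_I N) = \sigma_{\min}(R_I A S_J Q_0) = \sigma_{\min}(B Q_0) = \sigma_{\min}(B), \]
which is the claim.

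The only delicate point — the main ``obstacle'', such as it is — is the dimensional bookkeeping around the identity $\sigma_{\min}(X) = \min_{\|x\|=1}\|Xx\|$, which holds only when $X$ has at least as many rows as columns; this is exactly why one passes to the thin matrices $N$ and $R_I N$ with precisely $r$ columns, and why the inequalities $r \le m$, $r \le p$ matter. One also uses that a set of orthonormal vectors remains orthonormal after applying $S_J$, which is precisely $S_J^*S_J = I_q$. An alternative, should one wish to avoid the variational characterization, is to delete a single row or column at a time and invoke Cauchy's interlacing theorem on the principal submatrices of the Gram matrices $A^*A$ (for column deletions) and $AA^*$ (for row deletions): each deletion yields $\sigma_{i+1}(\cdot) \le \sigma_i(\text{deleted matrix}) \le \sigma_i(\cdot)$, and iterating $n-q$ column deletions followed by $m-p$ row deletions produces both $\sigma_1(B) \le \sigma_1(A)$ and $\sigma_r(B) \le \sigma_r(A)$; in that approach it is the index arithmetic across the iteration that must be tracked carefully.
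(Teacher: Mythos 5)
Your proof is correct. Note, however, that the paper itself does not prove this statement at all: it is quoted as a known background result and attributed to Thompson's 1972 interlacing theorem for singular values of submatrices, so there is no in-paper argument to compare against. What you have done is supply a self-contained derivation of exactly the two inequalities the paper needs, rather than the full interlacing package $\sigma_{i+(m-p)+(n-q)}(A)\le\sigma_i(B)\le\sigma_i(A)$ that the cited source establishes. Your argument is sound: writing $B=R_I A S_J$ with $R_I R_I^*=I_p$, $S_J^*S_J=I_q$ gives $\|B\|\le\|A\|$ by submultiplicativity, and for the small singular value the chain $\sigma_r(A)\ge\sigma_{\min}(AS_JQ_0)\ge\sigma_{\min}(R_IAS_JQ_0)=\sigma_{\min}(BQ_0)=\sigma_r(B)$ is valid, since $S_JQ_0$ is an isometry onto an $r$-dimensional subspace (so Courant--Fischer applies), and the two thin matrices involved have at least as many rows as their $r$ columns, so $\sigma_{\min}(\cdot)=\min_{\|x\|=1}\|\cdot\,x\|$ is legitimate in both places; dropping rows can only decrease these minima. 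Your sketched alternative (iterated deletion of one row or column at a time plus Cauchy interlacing of the Gram matrices) is essentially the classical route to Thompson's full theorem, and would be the natural choice if one ever needed the intermediate singular values as well, as the paper does elsewhere via Theorem~\ref{thm:Interlacing2}; for the two extreme singular values used here, your variational argument is shorter and perfectly adequate.
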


	Moreover, we recall the useful Woodbury identity.
	\begin{lemma}[Woodbury \cite{WOOD50}]\label{lem:woodbury}
		Let $A,B,U,V$ be complex matrices satisfying $B = A + UV$ with $A,B$ square. If $A$ and $I + VA^{-1}U$ are both invertible, then
		\[
		B^{-1} = A^{-1} - A^{-1}U(I + VA^{-1}U)^{-1}VA^{-1}.
		\]
	\end{lemma}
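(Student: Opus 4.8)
The plan is to prove the identity by direct verification. I will take the claimed expression for $B^{-1}$, multiply it by $B$, and check that the product is the identity matrix; since $B$ is square and finite-dimensional, a one-sided inverse is automatically two-sided, so this simultaneously establishes that $B$ is invertible and that its inverse has the stated form.

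First I would pin down the dimensions so that every product below is meaningful. Writing $A \in \C^{n \times n}$, the hypothesis $B = A + UV$ with $B$ square forces $U \in \C^{n \times m}$ and $V \in \C^{m \times n}$ for some $m$, so that $UV \in \C^{n \times n}$; consequently $VA^{-1}U \in \C^{m \times m}$, and the capacitance matrix $S := I_m + VA^{-1}U$ is square and, by hypothesis, invertible. Hence the right-hand side $A^{-1} - A^{-1} U S^{-1} V A^{-1}$ is a well-defined $n \times n$ matrix.

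Next I would substitute $B = A + UV$ and expand $B\bigl(A^{-1} - A^{-1} U S^{-1} V A^{-1}\bigr)$. Collecting terms and recognizing the factor $I_m + VA^{-1}U$ as $S$, the computation collapses:
\[
B\bigl(A^{-1} - A^{-1}US^{-1}VA^{-1}\bigr) = I_n + UVA^{-1} - U(I_m + VA^{-1}U)S^{-1}VA^{-1} = I_n + UVA^{-1} - UVA^{-1} = I_n.
\]
(Symmetrically, one could instead verify $\bigl(A^{-1} - A^{-1}US^{-1}VA^{-1}\bigr)B = I_n$ by the same algebra, with the factor $S$ appearing on the other side.)

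There is essentially no obstacle: the argument is a short algebraic manipulation, and the only point deserving a sentence of care is the passage from a right inverse to a genuine inverse, which is valid precisely because $B$ is square over $\C$. I would therefore conclude by observing that the displayed identity $B\bigl(A^{-1} - A^{-1}US^{-1}VA^{-1}\bigr) = I_n$ forces $B$ to be invertible with $B^{-1} = A^{-1} - A^{-1}U(I + VA^{-1}U)^{-1}VA^{-1}$, as claimed.
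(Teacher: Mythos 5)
Your verification is correct: the algebra collapses exactly as you display it, and your remark that a one-sided inverse of a square complex matrix is automatically two-sided properly handles the only subtle point (that invertibility of $B$ is a conclusion, not a hypothesis). The paper does not prove this lemma at all — it cites it directly from Woodbury's 1950 report — so there is no in-paper argument to compare against; your direct expansion of $B\bigl(A^{-1}-A^{-1}U(I+VA^{-1}U)^{-1}VA^{-1}\bigr)$ is the standard and complete proof of the identity as stated.
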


	When dealing with sequences of random matrices, one can estimate the distribution of eigenvalues, singular values, or related quantities such as, for instance, trace, determinant, norms. Here we collect some of the estimations we use further on. We do not claim that the bounds we mention below are the best possible ones; yet, they suffice for our purposes. \\
	
	\noindent First, we provide a probabilistic upper bound for the norm of a Gaussian random matrix.
	
	\begin{theorem}[\cite{TaoBook}]\label{thm:norm_bound}
		Suppose that the coefficients of a random matrix $N$ of size $\sqsize n$ are i.i.d.\ copies of a normal random variable. Then there exist absolute
		constants $C,c>0$ such that
		\[
		\f P\left( \| N\| >A\sqrt n\right) \le C \exp(-cAn)
		\]
		for all $A\ge C$. 
	\end{theorem}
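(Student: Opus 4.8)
The plan is to establish \autoref{thm:norm_bound} by a standard $\varepsilon$-net argument on the unit sphere combined with Gaussian tail bounds for scalar linear forms; we may assume the common law of the entries is centred, since otherwise $\|N\|$ grows linearly in $n$ and the claim is vacuous. First I would identify $\C^n$ with $\R^{2n}$ and fix $\varepsilon=1/4$, choosing $\varepsilon$-nets $\mathcal N_1,\mathcal N_2$ of the unit sphere of $\C^n$ of cardinality at most $(1+2/\varepsilon)^{2n}=9^{2n}$ (volumetric bound). A routine covering argument then yields the comparison $\|N\|=\sup_{\|x\|=\|y\|=1}|\langle Nx,y\rangle|\le 2\max_{x\in\mathcal N_1,\,y\in\mathcal N_2}|\langle Nx,y\rangle|$, the constant $2$ being a harmless bound on $(1-\varepsilon)^{-2}$.

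Next I would control each scalar $\langle Nx,y\rangle=\sum_{i,j}\overline{y_i}\,N_{ij}\,x_j$ for fixed unit $x,y$: this is a linear combination of the i.i.d.\ (real or complex) normal entries of $N$ with coefficients $\overline{y_i}x_j$ whose squared moduli sum to $\|x\|^2\|y\|^2=1$, hence it is a (possibly $\C$-valued) Gaussian of variance bounded by an absolute constant, so $\Prob(|\langle Nx,y\rangle|>s)\le C_1 e^{-c_1 s^2}$ for absolute $C_1,c_1>0$. A union bound over the at most $9^{4n}$ pairs $(x,y)\in\mathcal N_1\times\mathcal N_2$, combined with the comparison of the previous paragraph, gives $\Prob(\|N\|>t)\le 9^{4n} C_1 e^{-c_1 t^2/4}$ for every $t>0$.

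Finally I would substitute $t=A\sqrt n$ and absorb the net cardinality into the exponent. Writing $9^{4n}=e^{(4\log 9)n}$, the bound becomes $\Prob(\|N\|>A\sqrt n)\le C_1\exp\!\big(n(4\log 9-\tfrac{c_1}{4}A^2)\big)$. Fixing an absolute constant $C_0$ large enough that $\tfrac{c_1}{8}C_0^2\ge 4\log 9$, we obtain for all $A\ge C_0$ the inequalities $4\log 9-\tfrac{c_1}{4}A^2\le-\tfrac{c_1}{8}A^2\le-\tfrac{c_1 C_0}{8}A$ (the last step using $A^2\ge C_0 A$), whence $\Prob(\|N\|>A\sqrt n)\le C_1 e^{-(c_1 C_0/8)An}$. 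This is the assertion with $c:=c_1 C_0/8$ and $C:=\max\{C_0,C_1\}$.

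I do not expect a genuine obstacle: the statement is classical (it is essentially \cite{TaoBook}, and variants appear in any treatment of the operator norm of random matrices). The only points requiring a little care are making the net-to-norm comparison and the bookkeeping of absolute constants rigorous, and observing that nothing changes — beyond the values of $C_1,c_1$ — according to whether the underlying normal variable is real or complex. Consequently one may equally well simply cite the result, as we do in the body of the paper.
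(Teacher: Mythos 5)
The paper offers no proof of this statement: it is quoted verbatim as a known result from \cite{TaoBook}, so there is nothing internal to compare your argument against. Your $\varepsilon$-net proof is the standard one and is essentially correct: the volumetric bound $9^{2n}$ on each net, the reduction $\|N\|\le 2\max_{x,y}|\langle Nx,y\rangle|$ with $\varepsilon=1/4$, the observation that $\langle Nx,y\rangle$ is a Gaussian of variance $\mathrm{Var}(N_{11})\sum_{i,j}|y_i|^2|x_j|^2=\mathrm{Var}(N_{11})$, and the final absorption of $9^{4n}$ into the exponent using $A^2\ge C_0A$ for $A\ge C_0$ (which is exactly why the stated tail is linear rather than quadratic in $A$) are all sound. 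Two small caveats, both really defects of the theorem's loose statement rather than of your proof. First, if the entries are not centred the claim is not ``vacuous'' but \emph{false}: writing $N=\mu J+N_0$ with $J$ the all-ones matrix gives $\|N\|\gtrsim|\mu|n\gg A\sqrt n$ for fixed $A$ and large $n$, so $\Prob(\|N\|>A\sqrt n)\to 1$ while the right-hand side tends to $0$; the intended reading (consistent with every use of the theorem in the paper, where the entries have mean $0$ and variance $1$) is that the normal variable is standard. Second, for $C$ and $c$ to be genuinely absolute you must also normalise the variance, since your $c_1$ scales like $\mathrm{Var}(N_{11})^{-1}$. With those two readings made explicit, your argument is a complete and self-contained substitute for the citation.
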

	\noindent
	A very different kind of estimate, due to Tao and Vu, is needed for the least singular value of random matrices having nonzero mean.

	\begin{theorem}[\cite{TV08}]\label{thm:least_sv_bound}
		Let $c$, $d$ be positive constants, and let
		$X$ be a complex-valued random variable with non-zero finite variance. Then there are positive constants $a$ and $b$ such that
		the following holds: if $N_n$ is the $n \times n$ random matrix whose entries are i.i.d.
		copies of $X$, and $M$ is an $n \times n$ deterministic matrix with spectral norm at most
		$n^{c}$, then,
		\[
		\f P\left( \|(M+N_n)^{-1}\|\ge n^a  \right)
		\le 
		bn^{-d}.
		\]
	\end{theorem}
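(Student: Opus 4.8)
The plan is to prove the equivalent least-singular-value estimate: since $M+N_n$ is almost surely invertible and then $\|(M+N_n)^{-1}\| = \sigma_{\min}(M+N_n)^{-1}$, it suffices to find $a$ for which $\mathbb{P}\bigl(\sigma_{\min}(M+N_n)\le n^{-a}\bigr)\le bn^{-d}$. Write $A:=M+N_n$ and recall $\sigma_{\min}(A)=\inf_{\|x\|=1}\|Ax\|$. First I would discard the atypical event that $A$ has a large operator norm: by Theorem~\ref{thm:norm_bound} applied to $N_n$, together with the hypothesis $\|M\|\le n^{c}$, one has $\|A\|\le\|M\|+\|N_n\|\le 2n^{c}$ outside a set of probability $\le Ce^{-cn}$, which is far below the target $bn^{-d}$. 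On the complement we may use $\|A\|\le 2n^{c}$ freely when approximating test vectors.

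The heart of the proof is the compressible/incompressible decomposition of the unit sphere, in the spirit of Litvak--Pajor--Rudelson--Tomczak-Jaegermann and of Rudelson--Vershynin. Fix parameters $\delta,\rho\in(0,1)$ and call a unit vector $x$ \emph{compressible} if it lies within Euclidean distance $\rho$ of some vector supported on at most $\delta n$ coordinates, and \emph{incompressible} otherwise. For the compressible part, take an $\epsilon$-net $\mathcal N$ of the compressible vectors; its cardinality can be kept below $e^{\eta n}$ for any prescribed $\eta>0$ by taking $\delta$ small enough (which forces $\delta\asymp 1/\log n$, since the approximation step below requires $\epsilon$ to be polynomially small). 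For a fixed $y\in\mathcal N$, the vector $Ay=N_ny+My$ has independent coordinates, the $i$-th being $\sum_j(N_n)_{ij}y_j+(My)_i$: a sum of i.i.d.\ copies of $X$ weighted by $y$ (total variance $\operatorname{Var}(X)>0$), shifted by the deterministic scalar $(My)_i$. A tensorization over the $n$ independent coordinates then gives $\mathbb{P}(\|Ay\|\le t\sqrt n)\le e^{-c'n}$ for a suitable $t=t(X)>0$, and crucially this small-ball bound is uniform in the deterministic shift $My$. Choosing $\eta<c'$ and union-bounding over $\mathcal N$ controls the net; for a general compressible $x$ with nearest net point $y$ one then has $\|Ax\|\ge\|Ay\|-\|A\|\,\|x-y\|\ge t\sqrt n-2n^{c}\epsilon\ge\tfrac t2\sqrt n$, provided $\epsilon\le\tfrac t4 n^{-c+1/2}$. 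Hence $\inf\{\|Ax\|:x\text{ compressible}\}\ge\tfrac t2\sqrt n$ with probability $\ge 1-e^{-c'n/2}$, which eliminates these vectors from the bad event entirely.

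For incompressible $x$, one exploits that such a vector has a proportion $\gtrsim\delta$ of coordinates of size $\asymp n^{-1/2}$. Writing $A=[A_1\,|\,\cdots\,|\,A_n]$ in columns and using $\|Ax\|\ge|x_k|\,\mathrm{dist}(A_k,H_k)$ with $H_k:=\mathrm{span}\{A_j:j\ne k\}$, an averaging over the admissible coordinates $k$ (this is where incompressibility enters) yields $\mathbb{P}\bigl(\inf_{x\text{ incompr.}}\|Ax\|\le n^{-a}\bigr)\le\frac{C}{\delta n}\sum_{k=1}^n\mathbb{P}\bigl(\mathrm{dist}(A_k,H_k)\le Cn^{-a+1/2}\bigr)$. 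For fixed $k$, condition on the other columns, so that $H_k$ is determined and independent of $A_k=M_k+(N_n)_k$; pick a unit vector $w\perp H_k$, whence $\mathrm{dist}(A_k,H_k)\ge|\langle w,A_k\rangle|=\bigl|\sum_i w_i(N_n)_{ik}+\langle w,M_k\rangle\bigr|$ --- a weighted sum of i.i.d.\ copies of $X$ with unit coefficient vector $w$, shifted by a deterministic scalar. It remains to bound the small-ball (Littlewood--Offord) quantity $\mathcal L(w,\tau):=\sup_{s}\mathbb{P}\bigl(|\sum_i w_i(N_n)_{ik}-s|\le\tau\bigr)$ at scale $\tau\asymp n^{-a+1/2}$, in expectation over the random normal $w$.

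This last point is the main obstacle. An elementary Esseen-type estimate only gives $\mathcal L(w,\tau)\lesssim\tau+n^{-1/2}$ for spread-out $w$, and worse for structured (nearly sparse, or lattice-like) $w$ --- which is the relevant case when $X$ is atomic, since then $\langle w,A_k\rangle$ can concentrate. A floor of order $n^{-1/2}$ is not enough to reach $n^{-d}$ for large $d$; the resolution, and the source of the dependence $a=a(c,d)$, is a soft form of the inverse Littlewood--Offord principle: one shows that the normal $w$ of the \emph{random} hyperplane $H_k$ is, except on an event of probability $\le n^{-d-2}$, sufficiently unstructured that $\mathcal L(w,\tau)\le n^{-d-2}$ --- intuitively, a $w$ with rich additive structure would force improbable near-coincidences among the random columns $\{A_j\}_{j\ne k}$. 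Because only polynomial decay is required (not the optimal $n^{-1/2}$ of Rudelson--Vershynin), a comparatively crude version of this suffices; but organizing it, and threading together the two regimes subject to $\|A\|\le 2n^{c}$, is the delicate part, and it fixes $a$ as an increasing function of both $c$ and $d$.
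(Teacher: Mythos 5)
The paper does not prove this statement at all: it is imported verbatim from Tao and Vu \cite{TV08} and used as a black box, so there is no internal proof to compare yours against. Your sketch does follow the genuine route by which such bounds are established in the literature (reduction to $\sigma_{\min}$, an a priori operator-norm bound, the compressible/incompressible decomposition with an $\epsilon$-net on the compressible part, and the reduction of the incompressible part to distances of columns from the random hyperplanes spanned by the remaining columns), and your bookkeeping of the parameters ($\epsilon$ polynomially small because $\|A\|\le 2n^c$, hence $\delta\asymp 1/\log n$, hence the extra $\log n$ absorbed into $n^{-d-2}$) is coherent.

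Two points prevent this from being a proof rather than a roadmap. First, you invoke Theorem \ref{thm:norm_bound} to get $\|N_n\|=O(\sqrt n)$ with exponentially small failure probability, but that theorem is stated for Gaussian entries, whereas here $X$ is only assumed to have finite non-zero variance; with just two moments the correct a priori bound is a polynomial one (e.g.\ $\|N_n\|\le\|N_n\|_F\le n^{1+\epsilon}$ outside an event of probability $O(n^{-2\epsilon})$, obtained from Chebyshev, or a truncation argument giving failure probability $n^{-d-1}$), and you must then recheck that the net argument survives with $\|A\|\le n^{c'}$ for a possibly larger $c'$ --- it does, but the constants $a$ change. Second, and more seriously, the step you yourself identify as ``the main obstacle'' --- showing that the random normal $w$ of $H_k$ is unstructured enough that $\sup_s\mathbb{P}\bigl(|\sum_i w_i(N_n)_{ik}-s|\le\tau\bigr)\le n^{-d-2}$ except on an event of probability $n^{-d-2}$ --- is the entire analytic content of the Tao--Vu theorem. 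It rests on their inverse Littlewood--Offord theory (structure of coefficient vectors with large concentration probability, combined with a counting/entropy argument over structured normals and the shifted columns $M_j+(N_n)_j$), and asserting that ``a comparatively crude version suffices'' does not discharge it. As written, the core of the theorem is assumed rather than proved, so the proposal should be regarded as an accurate outline of the known argument with its hardest step left open.
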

	\noindent Note that, in Theorem \ref{thm:least_sv_bound}, we can always choose $d>1$ so that the probability is summable. Thus, we can use Borel-Cantelli lemma to obtain a lower bound for the last singular value valid for all sufficiently large $n$.\\

	A widely used distribution in the theory of probability for real random variables is the Beta distribution on $[0,1]$, that depends on two positive parameters $\alpha, \beta$ and it is described by its density function
	\[
	Y\sim B(\alpha,\beta) \implies 
	\f P \left( Y\le \lambda \right) = \frac{\Gamma(\alpha + \beta)}{\Gamma(\alpha)\Gamma(\beta)} \int_0^\lambda
	x^{\alpha-1}(1-x)^{\beta-1}\, {{\rm d}}x,\quad \forall \, 0\le \lambda\le 1,
	\]
	where $\Gamma(z) = \int_0^\infty t^{z-1} e^{-t} dt$ is Euler's gamma function. It is known that if $z$ is a positive integer, then $\Gamma(z) = (z-1)!$. 
	
	Consider now a real random vector $X$ with $N$ components, uniformly distributed on the unit sphere $S^{N-1}:=\{ X \in \R^N : \| X \|_2 =1 \}$. It is known \cite[Sec. 4.1.1]{LN20} that the squared norm of the projection of $X$ onto a $k$-dimensional space is Beta distributed with parameters $B(k/2,(N-k)/2)$. Note that a complex random vector with $N$ complex components, uniformly distributed on the respective complex spherical surface, can be seen as a real vector with $2N$ real components and uniformly distributed on $S^{2N-1}$. As a consequence,  the squared norm of the projection of such a vector onto a $k$-dimensional complex space is Beta distributed with parameters $B(k,N-k)$.
	In particular, if $k=1$, then 
	\begin{equation}\label{eq:projection_beta}
	\f P\left( \|P_1(X)\|^2\le \lambda \right) = \frac{\Gamma(N)}{\Gamma(1)\Gamma(N-1)} \int_0^\lambda
	(1-x)^{N-2}\, {{\rm d}}x
	=
	1 - (1-\lambda)^{N-1}
	,\quad \forall \ \lambda \in [0,1].
	\end{equation}

	\subsection{A variation on a result by B\"{u}rgisser and Cucker: a formula for the tail bounds of the norm of the pseudoinverse of a non-zero mean random matrix}\label{sec:appbc}
	
	Theorem \ref{thm:normpseudoinverse} yields a tail bound on the norm of the Moore-Penrose pseudoinverse of a random Gaussian complex rectangular matrix \emph{with nonzero mean}. It is a modification of the results obtained by B\"{u}rgisser and Cucker in \cite[Sec. 3]{BC10} and \cite[Ch. 4]{BCBook}, with two differences. A minor one is that we work with complex, as opposed to real, numbers and random variables (as noted already in \cite{BCBook}, this extension is not at all difficult). A more significant one is that we are interested in the limit case where $\lambda = \frac{n-1}{N} \rightarrow 1$, and we therefore state the result in such a way that it covers that case, unlike \cite{BC10,BCBook} that provide formulae for the regime $\lambda<1$. For these reasons, as well as for the sake of self-containedness, we provide a full proof (which still follows very closely the lead of \cite{BC10,BCBook}).
	
	\begin{theorem}\label{thm:normpseudoinverse}
		Let $G$ be an $n \times N$ (with $N \ge n$) complex random matrix with i.i.d. normally distributed entries with mean $0$ and variance $n^{-1}$. Suppose $R=R_D + G$ where $R_D \in \C^{n \times N}$ is a deterministic matrix, and let $R^\dagger$ be the Moore-Penrose pseudoinverse of $R$. Then, given $\tau > 0$,
		\[
		\Prob \left( \sigma_n(R)\le \tau  \right)
		\le 
		\frac { \tau^{2(N-n+1)}}{\sqrt{2\pi} }
		\frac{
			(nNe^2)^{N-n+1}
		}
		{
			(N-n+1)^{2(N-n+1) + 1/2}
		}.
		\]
	\end{theorem}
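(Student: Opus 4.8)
The plan is to follow the strategy of Bürgisser and Cucker, which computes the tail probability $\Prob(\sigma_n(R) \le \tau)$ by conditioning on the row space of $R$ and reducing the problem to a one-dimensional distance estimate. First, I would write $R$ with rows $r_1, \dots, r_n \in \C^N$, and recall the geometric identity $\sigma_n(R) \ge \dist(r_n, \mathrm{span}(r_1,\dots,r_{n-1}))/\sqrt{n}$ or, more precisely, that $\sigma_n(R)$ is bounded below by (a suitable normalization of) the distance from each row to the span of the others; in fact it suffices to use that $\sigma_n(R) \le \tau$ forces the distance from $r_n$ to the span $V$ of the remaining $n-1$ rows to be at most $\tau$ times something harmless. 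Conditioning on $r_1, \dots, r_{n-1}$, the space $V$ is (almost surely) of dimension $n-1$, and $r_n = (r_D)_n + g_n$ where $g_n$ is complex Gaussian with i.i.d. entries of variance $n^{-1}$. The orthogonal projection of $r_n$ onto $V^\perp$ (a space of complex dimension $N-n+1$) is then a complex Gaussian vector in $N-n+1$ complex dimensions with \emph{some} nonzero mean and covariance $n^{-1} I$.

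The key probabilistic input is then: if $Y$ is a complex Gaussian vector in $\C^{N-n+1}$ with arbitrary mean and i.i.d. coordinates of variance $n^{-1}$, then $\Prob(\|Y\|_2 \le \tau)$ is maximized when the mean is zero (a standard anti-concentration fact — shifting the mean of a Gaussian can only decrease the probability of landing in a fixed ball centered at the origin, since the Gaussian density centered at $0$ dominates near $0$; alternatively this is a direct consequence of the density being radially decreasing). Hence I may assume the mean is zero, in which case $n\|Y\|_2^2 \sim \chi^2_{2(N-n+1)}$ (a real chi-squared with $2(N-n+1)$ degrees of freedom). So $\Prob(\sigma_n(R) \le \tau) \le \Prob(\chi^2_{2(N-n+1)} \le n\tau^2 \cdot (\text{const}))$, and what remains is a purely analytic estimate on the lower tail of a chi-squared random variable. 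Writing $m = N-n+1$, I would bound the incomplete gamma integral
\[
\Prob(\chi^2_{2m} \le s) = \frac{1}{\Gamma(m)} \int_0^{s/2} t^{m-1} e^{-t}\, \diff{t} \le \frac{1}{\Gamma(m)} \int_0^{s/2} t^{m-1}\, \diff{t} = \frac{(s/2)^m}{m\,\Gamma(m)} = \frac{(s/2)^m}{\Gamma(m+1)},
\]
and then apply the lower Stirling bound $\Gamma(m+1) \ge \sqrt{2\pi}\, m^{m+1/2} e^{-m}$ (valid for all $m \ge 1$, which is the range here since $N \ge n$). Tracking $s = n N \tau^2$ or the appropriate constant, this produces exactly the claimed bound $\tau^{2m} (nNe^2)^m / \big(\sqrt{2\pi}\,(N-n+1)^{2m+1/2}\big)$ after collecting the powers of $e$ and $m$.

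The main obstacle — and the place where I expect to spend the most care — is the very first reduction: correctly relating $\sigma_n(R)$ to a row-to-subspace distance and identifying the conditional distribution of the relevant projection, including pinning down the constant that appears in front of $\tau^2$ in the chi-squared argument so that it comes out as $nN$ rather than something off by a factor. One must be careful that conditioning on the first $n-1$ rows does not disturb the Gaussianity or the variance of $g_n$, and that the projection onto $V^\perp$ preserves the i.i.d.\ structure in an orthonormal basis of $V^\perp$ (it does, since the complex Gaussian is unitarily invariant). The anti-concentration step (worst mean is zero) is conceptually easy but should be stated cleanly, perhaps via the estimate \eqref{eq:projection_beta}-type reasoning or a direct comparison of densities. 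The final Stirling computation is routine bookkeeping, but it is where the precise shape of the bound in the statement is verified, so I would carry it out carefully rather than hand-wave the exponents. The only genuinely new point relative to \cite{BC10,BCBook} is that nothing in this chain requires $\lambda = (n-1)/N < 1$: the chi-squared tail estimate and Stirling bound are valid all the way up to $m = 1$ (i.e.\ $N = n$), which is exactly the limit case of interest here.
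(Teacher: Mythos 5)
Your final small-ball estimate --- bounding the Gaussian density by its maximum to absorb the arbitrary mean, then the incomplete-gamma integral and Stirling --- is exactly the engine of the paper's proof, and the identity $\|R^\dagger e_1\|=1/\|r_\perp\|$ together with the conditional Gaussianity of the projection onto $V^\perp$ is there as well. The gap is in the very reduction you flag as delicate. The event $\sigma_n(R)\le\tau$ does \emph{not} force a fixed row to lie within $O(\tau)$ of the span of the others (for a fixed row one only has the reverse inequality $\sigma_n(R)\le \mathrm{dist}(r_i,V_i)$); it forces this for \emph{some} row, via $\sigma_n(R)\ge n^{-1/2}\min_i\mathrm{dist}(r_i,V_i)$. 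Since $R_D$ is arbitrary the rows are not exchangeable, so you must union-bound over the $n$ rows, and together with the $\sqrt n$ normalization this yields a bound of the form $n\,(n^2\tau^2)^{m}/m!$ with $m=N-n+1$. That is not the stated inequality: it exceeds the claimed bound by the factor $n\,(nm/(Ne))^{m}$, which blows up as soon as $m$ is large compared with $Ne/n$ --- precisely the regime $N\sim n$, $m\sim n^{1-\delta}$ in which the theorem is invoked in Lemma \ref{lem:sigmafnE1CT}, where your version diverges (for $\tau=n^{\ve-1/2}$ it behaves like $n(en^{2\ve+\delta})^m$) while the stated one tends to zero.

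Correspondingly, nothing in your outline can produce the factor $N^{N-n+1}$ in the numerator or the full power $(N-n+1)^{2(N-n+1)}$ in the denominator; the constant in front of $\tau^2$ coming out as $nN$ is not a bookkeeping issue but the output of a step you omit. The paper tests $\|R^\dagger\|$ against a \emph{uniformly random} unit vector $v$, using $\|R^\dagger v\|\ge|u^*v|\,\|R^\dagger\|$ for the maximizing direction $u$ and the fact that $|u^*v|^2\sim\Beta(1,n-1)$, to get $\Prob(\|R^\dagger\|\ge t)\le s^{2-2n}\,\Prob(\|R^\dagger e_1\|\ge t\sqrt{1-s^2})$ for every $s\in(0,1)$; optimizing at $s^2=(n-1)/N$ converts the penalty $s^{2-2n}(1-s^2)^{-(N-n+1)}$ into $(Ne)^{N-n+1}/(N-n+1)^{N-n+1}$. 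This replaces the union bound over rows by a single column of $R^\dagger$ at a sharply controlled cost, and it is the missing ingredient. As written, your argument proves a different (and here insufficient) inequality rather than the one in the statement.
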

	
	\begin{proof}
		We know that there exists an unit vector $u \in \C^n$ such that
		\[
		\|R^\dagger u\| = \|R^\dagger\|
		\]
		and that, for almost every\footnote{The set of matrices $R$ can be parametrized by a random vector in $\R^{2nN}$. The subset of matrices for which this property fails corresponds to a subset of the proper algebraic set described by $\mathrm{discriminant}(\det(RR^* - x I))=0$.
		}  $R$, $u$ is unique up to multiplication by a unit of $\C$. If $v \in \C^n$ is any unit vector, then for some $\beta \in \C$ it can be decomposed as 
		\[
		v = (u^*v)u +\beta u^\perp
		\]
		where $u^\perp \in \C^n$ is a unit vector orthogonal to $u$. On the other hand, since $R^\dagger u$ is orthogonal to $R^\dagger u^\perp$, we have
		\[
		\|R^\dagger v\|^2 = | u^*v|^2\|R^\dagger u\|^2 + \|\beta R^\dagger u^\perp\|^2
		\ge  |u^*v|^2\|R^\dagger\|^2
		\]
		\[
		\implies \|R^\dagger v\|\ge |u^*v| \|R^\dagger\|.
		\]
		Thus, for any $s\in(0,1)$ and $t>0$ we have
		\[
		\Prob_{R,v} \left( \|R^\dagger v\|\ge t\sqrt{1-s^2}  \right)
		\ge 
		\Prob_{R,v} \left( |u^*v|\ge \sqrt{1-s^2} \text{ and } \|R^\dagger\|\ge t \right)
		\]
		or equivalently
		\[
		\Prob_{R,v} \left( \|R^\dagger v\|\ge t\sqrt{1-s^2}  \right) \ge
		\Prob_{R} \left(  \|R^\dagger\|\ge t  \right)
		\Prob_{R,v} \left( |u^*v|\ge \sqrt{1-s^2} \Bigm\vert \|R^\dagger\|\ge t \right),
		\]
		where we choose $v$ uniformly over the unit vectors on $\f C^n$.
		Observe that, by unitary invariance,
		\[
		\Prob_{R,v} \left( |u^*v|\ge \sqrt{1-s^2} \Bigm\vert \|R^\dagger\|\ge t \right) =
		\Prob_{v} \left(|e_1^*v|\ge \sqrt{1-s^2} \right) =
		\Prob_{v} \left( |v_1|\ge \sqrt{1-s^2} \right).
		\]
		The vector $v$ can be then seen as a unit real random vector with $2n$ entries, uniformly distributed on the unit sphere $S_{2n-1}$. From \eqref{eq:projection_beta},
		\[
		\Prob_{v} \left( |v_1|\ge \sqrt{1-s^2} \right)
		= (n-1) \int_{1-s^2}^1 (1-x)^{n-2} =
		(s^2)^{n-1}.
		\]
		Therefore,
		\[
		\Prob_{R,v} \left( \|R^\dagger v\|\ge t\sqrt{1-s^2}  \right)
		\ge 
		\Prob_{R} \left(  \|R^\dagger\|\ge t  \right) s^{2n-2}
		\]
		\begin{equation}\label{eq:pseudo1}
		\implies \Prob_{R} \left(  \|R^\dagger\|\ge t  \right)
		\le 
		\f P_{R,v} \left( \|R^\dagger v\|\ge t\sqrt{1-s^2}  \right) s^{2-2n}.
		\end{equation}
		Note now that
		\begin{equation}\label{eq:pseudo2}
		\f P_{R} \left( \|R^\dagger v\|\ge t\sqrt{1-s^2}  \right)
		=
		\f P_{R} \left( \|R^\dagger e_1\|\ge t\sqrt{1-s^2}  \right)
		\end{equation}
		since any unitary action on $R$ does not change its property to be the sum of a Gaussian matrix with mean 0 and variance $n^{-1} I$, plus a deterministic matrix. Moreover, $w = R^\dagger e_1$ is the first column of $R^\dagger$, and from $RR^\dagger = I$ (which is true with probability $1$ since $R$ is almost surely full rank), we know that $w^*$ is orthogonal to all the rows of $R$ but $r^T=e_1^T R$. Furthermore,
		\[
		r^T w = 1. 
		\]
		Let $r_\perp$ be the component of $r$ orthogonal to the vector space $\mathcal{V}$ generated by the other rows of $R$. 
		Since $RR^*$ is almost surely invertible, with probability $1$ we have $w^* = e_1^T (RR^*)^{-1} R$, so $w^*$ belongs to the row space of $R$. Since it is also orthogonal to $\mathcal{V}$, we conclude that $w^*$ and $r_\perp^T$ are parallel, and
		\[
		1 = r^Tw = r_\perp^Tw \implies 
		1 = \|r_\perp\| \cdot \|w\|.  
		\]
		Let $\vf$ be the density of the random matrix $R$, that can be split into $\vf = \psi\rho$ 
		where $\psi$ is the density of $r$, and $\rho$ is the density of the rest of the rows, say, $R_1$. We have
		\begin{equation}\label{eq:densities}
		\Prob_{R} \left( \|R^\dagger e_1\|\ge p^{-1}  \right)
		=
		\int_{\|w\|\ge p^{-1}} \vf(R) \, {{\rm d}}R
		=
		\int_{R_1} \rho \int_{\|r_\perp\|\le p} \psi \, {{\rm d}}r\, {{\rm d}}R_1
		\end{equation}
		Let us focus on the integral over $r$. Fix $R_1$ as a set of $n-1$ linearly independent (almost surely) vectors, with span $\mathcal{V}$. On the other hand, $r_\perp$ is the projection of $r$ over $\mathcal{V}^\perp$. If we split $r = r_N + r_D$ where $r_N$(the first row of $G$)  is a random Gaussian vector $N(0,n^{-1}I)$ and $r_D$ (the first row of $R_D$) is a deterministic vector of bounded norm, then we conclude
		\[
		UU^*r  = r_\perp = UU^*r_N + UU^* r_D.
		\]
		Here, $U$ is a $N\times N-n+1$ matrix such whose columns are an orthonormal basis of $\mathcal{V}^\perp$. We can rewrite $U=QE$ where $Q$ is unitary and $E^T= [I\,\, 0]$, so that 
		\[
		UU^* = Q
		\begin{pmatrix}
		I & 0\\
		0 & 0
		\end{pmatrix}
		Q^*.
		\]
		The vector $Q^* r_N$ is still  a random Gaussian vector with mean $0$ and variance $n^{-1}I$, and hence, 
		\[
		Q^*r_\perp =
		\begin{pmatrix}
		\wt r_N \\
		0
		\end{pmatrix}
		+
		\begin{pmatrix}
		\wt r_D \\
		0
		\end{pmatrix} 
		\]
		where $\wt r_N$ is a random Gaussian vector, of length $N-n+1$, with mean $0$ and variance $n^{-1}$, while $\|\wt r_D\|\le \|r_D\|$. Setting $\wt r_\perp =\wt r_N + \wt r_D$, then
		\[
		Q^* r = \begin{pmatrix}
		\wt r_\perp \\
		*
		\end{pmatrix} 
		\] 
		so
		\begin{equation}\label{eq:densities2}
		\int_{\| r_\perp\|\le p} \psi \, {{\rm d}}r
		=
		\int_{\|\wt r_\perp\|\le p} \wt\psi \, {{\rm d}}\wt r
		\end{equation}
		where $\wt \psi$ is the distribution of $\wt r_\perp$, and $\wt r$ is a generic vector of length $N-n+1$. In other words, we are restricting to the first $N-n+1$ coordinates of $r$, up to a unitary transformation. 
		Now, observe that, from \eqref{eq:densities} and \eqref{eq:densities2},
		\begin{equation}\label{eq:pseudo_e1}
		\Prob_{R} \left( \|R^\dagger e_1\|\ge p^{-1}  \right)
		=
		\int_{R_1}  
		\Prob_{v\sim \normal(v_\star,n^{-1}I)}\left(  \|v\| \le p    \right)\,
		\rho
		\,
		{{\rm d}}R_1
		\le 
		\int_{R_1}  
		\Prob_{v\sim \normal(0,2I)}\left(  \|v\|^2 \le 2 np^2    \right)\,
		\rho
		\,
		{{\rm d}}R_1
		\end{equation}
		where $v_\star$ is a generic complex vector of dimension $N-n+1$ and $v$ is a random complex vector of the same rank.
		Note further that a complex normally distributed vector $v$ of length $N-n+1$ and variance $2$ can be seen  as a real normally distributed vector $v_\R$ of length $2(N-n+1)$ and variance $1$. Taking this viewpoint, we write 
		\[
		\Prob_{v\sim \normal(0,2I)}\left(  \|v\| \le \sqrt{2 n}p    \right)
		=
		\Prob_{v_{\R}\sim \normal(0,I)}\left(  \|v_{\R}\|  \le \sqrt{2 n}p    \right)
		=
		\frac{1}{(2\pi)^{N-n+1}} 
		\int_{\|X\|\le \sqrt{2 n}p }
		e^{-\|X\|^2/2} 
		\, {{\rm d}}X
		\]
		\[
		\le 
		\frac{1}{(2\pi)^{N-n+1}} 
		\int_{\|X\|\le \sqrt{2 n}p }
		1 
		\, {{\rm d}}X
		=
		\frac{
			(\sqrt{2 n}p)^{2(N-n+1)}
			\pi^{N-n+1} 
		}{(2\pi)^{N-n+1}
			(N-n+1)!
		} 
		=
		\frac{(   np^2 )^{N-n+1}}{(N-n+1)!}.
		\]
		Recalling Stirling's bound
		\[ (N-n+1)! \ge  \sqrt{2 \pi (N-n+1)} \left( \frac{N-n+1}{e} \right)^{N-n+1}, \]
		we get the estimate
		\[
		\Prob_{v\sim N(0,2I)}\left(  \|v\| \le \sqrt{2 n}p    \right) \le 
		\frac 1{\sqrt{2\pi}}
		\frac{(np^2e)^{N-n+1}}{
			(N-n+1)^{N-n+3/2}
		}.
		\]
		The latter upper bound does not depend on $R_1$, so plugging it into \eqref{eq:pseudo_e1} we obtain
		\begin{equation}\label{eq:pseudo_e1_2}
		\Prob_{R} \left( \|R^\dagger e_1\|\ge p^{-1}  \right)
		\le 
		\int_{R_1}  
		\frac 1{\sqrt{2\pi}}
		\frac{(np^2e)^{N-n+1}}{
			(N-n+1)^{N-n+3/2}
		}
		\,
		\rho
		\,
		{{\rm d}}R_1
		=
		\frac 1{\sqrt{2\pi}}
		\frac{(np^2e)^{N-n+1}}{
			(N-n+1)^{N-n+3/2}
		}
		\end{equation}
		and, using \eqref{eq:pseudo1} and \eqref{eq:pseudo2},
		\[
		\Prob_{R} \left(  \|R^\dagger\|\ge t  \right)
		\le 
		\frac 1{\sqrt{2\pi}}
		\frac{\left(
			\frac{ne}{t^2}
			\right)^{N-n+1}}{
			(N-n+1)^{N-n+3/2}
		}
		\frac{1}{(1-s^2)^{N-n+1}(s^2)^{n-1}}
		\]
		for every $s\in(0,1)$ and $t>0$.
		We now sharpen the bound by optimizing in the parameter $s^2$: to this goal, we need to find a maximum over $(0,1)$ of
		\[
		q(Y) = (1-Y)^{N-n+1}Y^{n-1}.\]
		If $n>1$, then a straightforward computation shows that the maximum is achieved at
		\[ Y = \frac{n-1}{N} =: \lambda\in (0,1)\]
		and since
		\[
		\lambda^{-(n-1)} = \lambda^{-N\lambda} = (\lambda^{-\frac{\lambda}{1-\lambda}} )^{N(1-\lambda)}
		\le e^{N(1-\lambda)},
		\]
		we have that 
		\[
		\frac 1{q(\lambda)}
		=
		\frac{1}{(1-\lambda)^{N-n+1}\lambda^{(n-1)}}
		\le
		\frac{e^{N(1-\lambda)}}{(1-\lambda)^{N-n+1}}
		=
		\frac
		{(Ne)^{N-n+1}}
		{(N-n+1)^{N-n+1}}.
		\]
		On the other hand, if $n=1$, then $\sup_Y q(Y) = 1$ and 
		\[
		1\le 
		e^N
		=
		\frac
		{(Ne)^{N-n+1}}
		{(N-n+1)^{N-n+1}}.
		\]
		Thus,
		\[
		\Prob_{R} \left(  \|R^\dagger\|\ge t  \right)
		\le 
		\frac 1{\sqrt{2\pi} t^{2(N-n+1)} }
		\frac{\left(
			ne
			\right)^{N-n+1}}{
			(N-n+1)^{N-n+3/2}
		}
		\frac
		{(Ne)^{N-n+1}}
		{(N-n+1)^{N-n+1}},
		\]
		and hence, by taking $\tau = t^{-1}$,
		\[
		\Prob \left( \sigma_n(R)\le \tau  \right)
		\le 
		\frac { \tau^{2(N-n+1)}}{\sqrt{2\pi} }
		\frac{
			(nNe^2)^{N-n+1}
		}
		{
			(N-n+1)^{2(N-n+1) + 1/2}
		}.
		\]
		This concludes the proof.
	\end{proof}
	
	\subsection{Estimates on the singular values of certain random matrices in Section \ref{sec:n}}\label{sec:appn}

	In Lemma \ref{lem:smallestsingM} below, we obtain (in probability) a lower bound for the smallest singular value of the matrix $n^{-1/2}M - zI$, where $M$ is defined in in \eqref{eq:C}.
	\begin{lemma}\label{lem:smallestsingM}
		Let $M$ be the $kn \times kn$ matrix defined as in \eqref{eq:C} and $0 \neq z \in \C$. There exist constants $a,b >0$ such that, for every large enough $n$, 
		\[
		\Prob \left( {\sigma_{kn}(n^{-1/2}M - zI)}  <  n^{-a-2} \right) \le 2bn^{-2}
		\] 
		and in particular, with probability 1,
		\[
		{\sigma_{kn}(n^{-1/2}M - zI)}  \ge   n^{-a-2}
		\]
		for all large enough $n$.
	\end{lemma}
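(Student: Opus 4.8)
The plan is to bound the operator norm of $(n^{-1/2}M-zI)^{-1}$ — equivalently $1/\sigma_{kn}(n^{-1/2}M-zI)$ — by a fixed power of $n$ outside an event of probability $O(n^{-2})$, and then to conclude with the Borel--Cantelli lemma. Using the splitting \eqref{eq:C}, I would start from $n^{-1/2}M-zI=A+UV$ with $A:=n^{-1/2}Z-zI$, $U:=n^{-1/2}E_1$ and $V:=C^T$. Since the block-shift $Z$ satisfies $Z^k=0$ and $\|Z\|\le 1$, we have $A^{-1}=-z^{-1}\sum_{l=0}^{k-1}(z^{-1}n^{-1/2}Z)^l$, whence $\|A^{-1}\|\le|z|^{-1}\sum_{l=0}^{k-1}|z|^{-l}=:K_0$ uniformly in $n$, while $\|U\|=n^{-1/2}$. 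By the Woodbury identity (Lemma \ref{lem:woodbury}), whenever the capacitance matrix $W:=I+VA^{-1}U\in\C^{n\times n}$ is invertible, $(n^{-1/2}M-zI)^{-1}=A^{-1}-A^{-1}UW^{-1}VA^{-1}$, and hence $\|(n^{-1/2}M-zI)^{-1}\|\le\|A^{-1}\|+\|A^{-1}\|^2\|U\|\|V\|\|W^{-1}\|$.

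The crucial step is to identify $W$. Writing $E_p$ for the $kn\times n$ matrix having $I_n$ in the $p$-th block row and zeros elsewhere (so $E_1$ is as in \eqref{eq:C}), the relation $Z^lE_1=E_{l+1}$ gives $A^{-1}U=-\sum_{p=1}^{k}z^{-p}n^{-p/2}E_p$; since $C^TE_p=-C_{k-p}$ and $\sum_{m=0}^{k-1}(\sqrt n\,z)^mC_m=P_n(\sqrt n\,z)-(\sqrt n\,z)^kI_n$, one finds $W=I+C^TA^{-1}U=z^{-k}n^{-k/2}P_n(\sqrt n\,z)$. Equivalently $W=z^{-1}R'$ with $R':=zI+\sum_{m=0}^{k-2}z^{m-k+1}n^{(m-k)/2}C_m+n^{-1/2}C_{k-1}$, so $W$ is invertible iff $P_n(\sqrt n\,z)$ is, and $\|W^{-1}\|=|z|/\sigma_n(R')$. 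The point of this rewriting is that $R'$ has exactly the structure covered by Theorem \ref{thm:normpseudoinverse}: conditionally on $C_0,\dots,C_{k-2}$, it is a fixed deterministic matrix $R_D':=zI+\sum_{m=0}^{k-2}z^{m-k+1}n^{(m-k)/2}C_m$ plus the independent Gaussian matrix $n^{-1/2}C_{k-1}$, whose entries are i.i.d.\ complex normal with mean $0$ and variance $n^{-1}$.

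Applying Theorem \ref{thm:normpseudoinverse} with $N=n$ (so $N-n+1=1$), conditionally on $C_0,\dots,C_{k-2}$ and then integrating, yields $\Prob(\sigma_n(R')\le\tau)\le(e^2/\sqrt{2\pi})n^2\tau^2$ for all $\tau>0$; taking $\tau=n^{-2}$ gives $\Prob(\sigma_n(R')\le n^{-2})\le bn^{-2}$ with $b:=e^2/\sqrt{2\pi}$. On the complementary event, intersected with $\{\|C_m\|\le n\text{ for all }m\}$ — which by Theorem \ref{thm:norm_bound} (with $A=\sqrt n$) fails only with probability $o(n^{-2})$ — the matrix $W$ is invertible, $\|W^{-1}\|\le|z|n^2$, and $\|V\|=\|C^T\|\le(\sum_{m=0}^{k-1}\|C_m\|^2)^{1/2}\le kn$; substituting into the bound from the first paragraph gives $\|(n^{-1/2}M-zI)^{-1}\|\le K_0+K_0^2k|z|n^{5/2}\le n^3$ for all sufficiently large $n$. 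Hence $\sigma_{kn}(n^{-1/2}M-zI)\ge n^{-3}$ outside an event of probability at most $bn^{-2}+o(n^{-2})\le 2bn^{-2}$, which is the claimed bound with $a=1$; since $\sum_n 2bn^{-2}<\infty$, Borel--Cantelli yields the almost sure statement.

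The main obstacle is the identification $W=z^{-k}n^{-k/2}P_n(\sqrt n\,z)$ in the second step. Once one sees that the Woodbury capacitance matrix of the companion resolvent is, up to a nonzero scalar, the matrix polynomial $P_n$ evaluated at the rescaled argument $\sqrt n\,z$, the structured-matrix problem collapses to a least-singular-value tail estimate for a non-zero-mean Gaussian matrix, which is precisely what Theorem \ref{thm:normpseudoinverse} delivers; the remainder is routine bookkeeping with operator norms, and the extra room in the exponent (the statement asks only for $n^{-a-2}$ rather than $n^{-3}$) comfortably absorbs the various $z$- and $k$-dependent constants.
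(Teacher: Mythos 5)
Your proposal is correct and follows the same architecture as the paper's proof: the identical splitting $n^{-1/2}M-zI=(n^{-1/2}Z-zI)+n^{-1/2}E_1C^T$, the Woodbury identity (Lemma \ref{lem:woodbury}), a high-probability bound $\|C^T\|\lesssim\sqrt n$ from Theorem \ref{thm:norm_bound}, and a least-singular-value estimate for the $n\times n$ capacitance matrix $W=I+C^T(Z-\sqrt n zI)^{-1}E_1$, followed by Borel--Cantelli. The one genuine difference is how that last estimate is obtained. The paper observes that each entry of $C^TN^{-1}E_1=\sum_{j}(\sqrt n z)^{j-k}C_j$ is itself a centered complex Gaussian with variance $\Theta(1/n)$ (a linear combination of independent Gaussians), rescales, and invokes the Tao--Vu bound (Theorem \ref{thm:least_sv_bound}) for a shifted i.i.d.\ matrix, which yields $\sigma_n\ge n^{-a}$ for some unspecified $a$. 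You instead condition on $C_0,\dots,C_{k-2}$, treat $W$ (up to the factor $z^{-1}$) as a deterministic matrix plus the independent Gaussian $n^{-1/2}C_{k-1}$, and apply Theorem \ref{thm:normpseudoinverse} in the square case $N=n$; since that tail bound is uniform in the deterministic part, integrating out the conditioning is legitimate, and you get the explicit exponent $a=1$ with failure probability $O(n^{-2})$. Both routes are valid; yours is slightly more self-contained (it reuses the appendix's own Theorem \ref{thm:normpseudoinverse} rather than the external Tao--Vu input, exactly as the paper itself does later in Lemma \ref{lem:smallestsvA}) and gives explicit constants, while the paper's route avoids the conditioning step. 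Your identification $W=z^{-k}n^{-k/2}P_n(\sqrt n\,z)$ is a correct and pleasant structural remark --- consistent with $\det(M-\lambda I)=\pm\det P(\lambda)$ for the companion form --- though it is not strictly needed: what matters is only that the $C_{k-1}$ term enters $W$ with coefficient $(z\sqrt n)^{-1}$ and is independent of the rest.
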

	\begin{proof}
		Using the same notation introduced in \eqref{eq:C}, let us rewrite 
		\[
		n^{-1/2}M - zI
		=n^{-1/2}E_1C^T + n^{-1/2}Z - zI
		\]
		and denote $N:=Z - n^{1/2} z I$. recall that the inverse of the least singular value of an invertible square matrix $X$ is equal to the spectral norm of $X^{-1}$.
		By Woodbury Lemma (Lemma \ref{lem:woodbury}), we see that
		\[
		(n^{-1/2}E_1C^T + n^{-1/2}Z - zI)^{-1} =
		n^{1/2} (N + E_1C^T)^{-1} =  
		n^{1/2}\left[ N^{-1} - N^{-1}E_1(  I + C^TN^{-1}E_1 )^{-1} C^T N^{-1} \right]
		\]
		Here we used that $(I + C^TN^{-1}E_1)$ is invertible with probability 1 and that $N$ is invertible since $z\ne 0$. As a consequence
		\[
		\frac 1{\sigma_{kn}(n^{-1/2}M - zI)} \le n^{1/2}\|N^{-1}\| \left( 1 + \|(  I + C^TN^{-1}E_1 )^{-1}\| \|C^T\| \|N^{-1}\|  \right).
		\]
		Observe now that $N = Z - n^{1/2}zI$ is a block Toeplitz matrix lower triangular matrix. It follows that its inverse  is also a block Toeplitz lower triangular matrix, and it is easily verified that the first block column of $N^{-1}$ is
		\[
		N^{-1} =
		\begin{pmatrix}
		(\sqrt n z)^{-1}I  & \\
		-(\sqrt n z)^{-2}I & \ddots \\
		\vdots & \ddots \\
		(-1)^{k-1}(\sqrt n z)^{-k}I & \ddots  
		\end{pmatrix}.
		\]
		Using $\|N^{-1}\|\le \sqrt{\|N^{-1}\|_1\|N^{-1}\|_{\infty}} =\|N^{-1}\|_1 $
		we can bound the norm from above with
		\[
		\|N^{-1}\| \le \|N^{-1}\|_1 = \sum_{i=1}^k (\sqrt n |z|)^{-i} = (\sqrt n |z|)^{-1} \frac{ (\sqrt n |z|)^{-k} -1 }{(\sqrt n |z|)^{-1} - 1 }
		=
		\frac{ 1-(\sqrt n |z|)^{-k}  }{\sqrt n |z| - 1 }
		\le \frac 2{\sqrt n |z|  }
		\]
		where we are assuming $ n \ge 4/|z|^2$.  Hence, 
		\[
		\frac 1{\sigma_{kn}(n^{-1/2}M - zI)} \le \frac 2{|z|} \left( 1 + \frac 2{\sqrt n |z|  }\|(  I + C^TN^{-1}E_1 )^{-1}\| \|C^T\|  \right).
		\]
		Note that $C^T$ is a $n\times kn$ matrix, and can be seen as a submatrix of a $kn\times kn$ random matrix $\wt C$ where every entry is an i.i.d copy of a Gaussian complex random variable $X$. Thanks to an interlacing theorem for singular values (Theorem \ref{cor:Interlacing}), we have that 
		$
		\| C^T\| \le \|\wt C\|
		$.
		On the other hand, by \autoref{thm:norm_bound}, 
		\begin{equation}\label{norm_C}
		\Prob (\| C^T\| > r\sqrt{n}) \le  
		\Prob (\| \wt C\| > r\sqrt{n}) \le
		s\exp(-crn)
		\end{equation}
		where $c,s,r>0$ are absolute constants. As a consequence, with high probability (at least $1 - s\exp(-crn) $), 
		\begin{equation}\label{eq:aux1}
		\frac 1{\sigma_{kn}(n^{-1/2}M - zI)} \le \frac 2{|z|} ( 1 + \frac {2r}{ |z|  }\|(  I + C^TN^{-1}E_1 )^{-1}\|   ).
		\end{equation}
		Consider now the matrix 
		\[
		C^TN^{-1}E_1 =  \sum_{i=1}^{k} (-1)^{i} (\sqrt n z)^{-i} C_{d-i}.
		\]
		Clear, each of its entry is a linear combination of i.i.d. Gaussian variables all having mean $0$, and this is still a normally distributed variable with mean $0$. Moreover, the variance is
		\[
		\sum_{i=1}^{k}  (\sqrt n |z|)^{-2i}
		=
		(\sqrt n |z|)^{-2}\frac{ (\sqrt n |z|)^{-2k} -1 }{(\sqrt n |z|)^{-2} - 1 }
		=
		\frac{1- (\sqrt n |z|)^{-2k}  }{(\sqrt n |z|)^{2} - 1 } =: \frac {c(n)^2}n = \Theta( \frac 1n).
		\] 
		Hence,
		\[
		\|(  I + C^TN^{-1}E_1 )^{-1}\| ^{-1}
		=
		\sigma_{n} (I+ C^TN^{-1}E_1)
		=
		\frac{c(n)}{\sqrt n}  \sigma_{n} \left(I \frac{\sqrt n}{c(n)} + G\right ) 
		\]
		where now $G$ is a matrix where all entries are i.i.d copies of a complex Gaussian random variable $X$ having mean $0$ and variance $1$. 
		Since for large values of $n$, $\|I \frac{\sqrt n}{c(n)}\| \le n$, we can apply \autoref{thm:least_sv_bound} and conclude that there exist positive constants $a,b$ such that 
		\[
		\Prob \left( \sigma_{n}\left(I \frac{\sqrt n}{c(n)} + G\right) \le n^{-a} \right)  \le bn^{-2}
		\] 
		meaning that, with high probability (at least $1-bn^{-2}$), it holds
		\begin{equation}\label{eq:aux2}
		\|(  I + C^TN^{-1}E_1 )^{-1}\| ^{-1} 
		\ge 
		\frac{c(n)}{\sqrt n}   n^{-a}  \ge n^{-a-1}
		\end{equation}
		for sufficiently large $n$. As a consequence, from \eqref{eq:aux1} and \eqref{eq:aux2},
		\[
		\frac 1{\sigma_{kn}(n^{-1/2}M - zI)} \le \frac 2{|z|} ( 1 + \frac {2r}{ |z|  }n^{a+1}   )
		\]
		and thus
		\[
		{\sigma_{kn}(n^{-1/2}M - zI)} \ge \frac 1{\frac 2{|z|} ( 1 + \frac {2r}{ |z|  }n^{a+1}   )} \ge n^{-a-2}
		\]
		with probability at least
		\[
		1 - bn^{-2} - s\exp(-crn)\ge 1 - 2bn^{-2}
		\]
		for any large enough values of $n$. In particular, we can conclude the proof by invoking the Borel-Cantelli lemma.
	\end{proof}
	
	Lemma \ref{lem:smallestsingE1CT} is the analogue of \ref{lem:smallestsingM} when the matrix $E_1C^T$ is considered.
	
	\begin{lemma}\label{lem:smallestsingE1CT}
		Let $E_1$ and $C^T$ be the matrices defined in \eqref{eq:C} and immediately after it. There exists a constant $\wt a>0$ such that, for all large enough $n$, 
		\[
		\Prob \left( {\sigma_{kn}(n^{-1/2}E_1C^T - zI)}  <  n^{-\wt a-2} \right) \le 2n^{-2}
		\] 
		and in particular, with probability $1$,
		\[
		{\sigma_{kn}(n^{-1/2}E_1C^T - zI)}  \ge   n^{-\wt a-2}
		\]
		for all sufficiently large $n$.
	\end{lemma}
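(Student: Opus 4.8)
The plan is to exploit a structural feature that makes this lemma genuinely easier than Lemma~\ref{lem:smallestsingM}: with $E_1,C^T$ as in~\eqref{eq:C}, the matrix $n^{-1/2}E_1C^T-zI$ is \emph{block upper triangular}. Partitioning it into the first $n$ and the last $(k-1)n$ coordinates, its top-left block equals $A:=-n^{-1/2}C_{k-1}-zI$, its bottom-right block equals $-zI_{(k-1)n}$, its top-right block equals $-n^{-1/2}[\,C_{k-2}\ \cdots\ C_0\,]$, and its bottom-left block vanishes. Since $z\ne0$ the block $-zI_{(k-1)n}$ is invertible, and $A$ is invertible with probability $1$ (its singularity would force $-\sqrt n\,z$ to be an eigenvalue of $C_{k-1}$, an event of probability zero); hence the whole matrix is almost surely invertible, and the closed-form expression for the inverse of a block upper triangular matrix gives at once
\begin{align*}
\frac{1}{\sigma_{kn}(n^{-1/2}E_1C^T-zI)}&=\big\|(n^{-1/2}E_1C^T-zI)^{-1}\big\|\\
&\le\Big(1+|z|^{-1}n^{-1/2}\big\|[\,C_{k-2}\ \cdots\ C_0\,]\big\|\Big)\|A^{-1}\|+|z|^{-1}.
\end{align*}
One may alternatively reach an essentially identical bound through the Woodbury identity (Lemma~\ref{lem:woodbury}), exactly as in the proof of Lemma~\ref{lem:smallestsingM}, with $A=-zI$ and the rank-$n$ correction $n^{-1/2}E_1C^T$; the term then to be controlled is $(I+z^{-1}n^{-1/2}C_{k-1})^{-1}$.

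It remains to control the two random quantities on the right-hand side. First, $[\,C_{k-2}\ \cdots\ C_0\,]$ is an $n\times(k-1)n$ submatrix of a $(k-1)n\times(k-1)n$ matrix with i.i.d.\ Gaussian entries, so by the interlacing theorem for submatrices (Theorem~\ref{cor:Interlacing}) together with the norm bound of Theorem~\ref{thm:norm_bound} its spectral norm is at most $r\sqrt n$ --- recall that $k$ is a fixed constant --- outside an event of exponentially small (in $n$) probability; thus $n^{-1/2}\|[\,C_{k-2}\ \cdots\ C_0\,]\|\le r$ with overwhelming probability. Second, $\|A^{-1}\|=\sqrt n\,\|(C_{k-1}+\sqrt n\,zI)^{-1}\|$, and $C_{k-1}+\sqrt n\,zI$ is the sum of the Gaussian matrix $C_{k-1}$ and a deterministic matrix of spectral norm $\sqrt n\,|z|\le n$ for $n$ large; hence the Tao--Vu least-singular-value bound (Theorem~\ref{thm:least_sv_bound}), applied with $c=1$ and with the exponent $d$ chosen at least $3$, yields a constant $\wt a>0$ with $\|(C_{k-1}+\sqrt n\,zI)^{-1}\|<n^{\wt a}$ outside an event of probability at most $bn^{-d}$, whence $\|A^{-1}\|<n^{\wt a+1/2}$. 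On the intersection of these two good events the displayed inequality gives $\sigma_{kn}(n^{-1/2}E_1C^T-zI)^{-1}\le(1+|z|^{-1}r)\,n^{\wt a+1/2}+|z|^{-1}\le n^{\wt a+2}$ for all $n$ large enough, i.e.\ $\sigma_{kn}(n^{-1/2}E_1C^T-zI)\ge n^{-\wt a-2}$; a union bound shows the complementary event has probability at most $bn^{-d}$ plus an exponentially small term, hence at most $2n^{-2}$ once $n$ is large, and the almost sure statement follows from the Borel--Cantelli lemma since $\sum_n 2n^{-2}<\infty$.

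I do not anticipate a serious obstacle: the block triangular structure makes the inverse completely explicit and bypasses the Woodbury manoeuvre of Lemma~\ref{lem:smallestsingM}, so the argument is strictly shorter than that of its companion lemma --- indeed one could almost recycle the second half of that proof verbatim. The two points that require a little care are (i) keeping track of the powers of $\sqrt n$ that arise when passing between $A$, $C_{k-1}+\sqrt n\,zI$, and $n^{-1/2}[\,C_{k-2}\ \cdots\ C_0\,]$, so as to land precisely on the exponent $-\wt a-2$ claimed in the statement rather than on something larger; and (ii) stating everything ``for all $n$ sufficiently large'', since it is only in that regime that the $O(n^{-2})$ and exponentially small error probabilities combine into the clean bound $2n^{-2}$, which is in any case all that is needed for the Borel--Cantelli step delivering the almost sure conclusion.
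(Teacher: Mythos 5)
Your proposal is correct and follows essentially the same route as the paper: the paper applies the Woodbury identity with $N=-n^{1/2}zI$, which for this rank-$n$ perturbation of a scalar multiple of the identity amounts to exactly the explicit block-upper-triangular inverse you write down, and the two probabilistic inputs are identical (the norm bound on $C^T$ via Theorem~\ref{thm:norm_bound} plus interlacing, and the Tao--Vu bound of Theorem~\ref{thm:least_sv_bound} applied to $C_{k-1}+\sqrt{n}\,zI$, whose deterministic shift has norm at most $n^{c}$). Your bookkeeping of the powers of $\sqrt{n}$, the union bound, and the Borel--Cantelli step all match the paper's argument.
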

	\begin{proof}
		The proof is very similar to Lemma \ref{lem:smallestsingM}, so we only sketch it. In this case, set $N := -n^{1/2}zI$ and
		\[
		(n^{-1/2}E_1C^T - zI)^{-1} =
		n^{1/2}\left[ N^{-1} - N^{-1}E_1(  I + C^TN^{-1}E_1 )^{-1} C^T N^{-1} \right]
		\]
		where $N$ is invertible since $z\ne 0$ and $ I + C^TN^{-1}E_1$ is almost surely invertible. We have $\|N^{-1}\| = n^{-1/2}/|z|$, and hence, with probability at least $1  - s\exp(-crn)$, 
		\[
		\frac 1{\sigma_{kn}(n^{-1/2}E_1C^T - zI)} \le \frac 1{|z|} ( 1 + \frac r{|z|  }\|(  I -\frac{1}{n^{1/2}z} C_{k-1} )^{-1}\|    ).
		\]
		Again, we write
		\[
		\|(  I -\frac{1}{n^{1/2}z} C_{k-1} )^{-1}\| ^{-1} 
		= 
		\sigma_n (I -\frac{1}{n^{1/2}z} C_{k-1}) 
		= 
		\frac{1}{n^{1/2}|z|}
		\sigma_n (-n^{1/2}zI + C_{k-1})
		\]
		Note that $\|-I n^{1/2}z\| \le n$ for $n$ big enough, so we can apply \autoref{thm:least_sv_bound} and find that with high probability (greater than $1-n^{-2}$), 
		\[
		\sigma_n (-n^{1/2}zI + C_{k-1}) \ge n^{-\wt a} 
		\]
		so that
		\[
		\frac 1{\sigma_{kn}(n^{-1/2}E_1C^T - zI)} \le \frac 1{|z|} ( 1 + r
		n^{\wt a+1} 
		)
		\]
		and
		\[
		\sigma_{kn}(n^{-1/2}E_1C^T - zI) \ge \frac {|z|}{ ( 1 + r
			n^{\wt a+1} )} \ge n^{-\wt a - 2}.
		\]
		By the Borel-Cantelli Lemma, the statement follows.
	\end{proof}
	
	In Lemma \ref{lem:spnAB}, we control the spectral norms of $M-zI$ and $E_1C^T-zI$.
	
	\begin{lemma}\label{lem:spnAB}
		Let $M,E_1,C^T$ be defined as in \eqref{eq:C} and immediately after it.	There exist constants $r,s>0$ such that for any $n$ large enough, 
		\[
		\Prob \left(
		\left\| n^{-1/2}E_1 C^T -zI\right \| >  r + |z| 
		\right)
		\le
		s\exp(-r^2n)
		\]
		\[
		\Prob \left(
		\left\| n^{-1/2}M -zI\right \| >  r + |z| + 1
		\right)
		\le
		s\exp(-r^2n)
		\]
		and in particular, there exists $d>0$ such that, with probability $1$,
		\[
		{\sigma_{1}(n^{-1/2}E_1 C^T -zI)}  \le   d,\qquad {\sigma_{1}(n^{-1/2}M -zI)}  \le   d
		\]
		for any sufficiently large $n$.
	\end{lemma}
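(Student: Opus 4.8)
The plan is to reduce both tail bounds to a single estimate on the spectral norm of the random block $C^T$ --- essentially the estimate \eqref{norm_C} already used in the proof of Lemma \ref{lem:smallestsingM} --- and then to pass to the almost sure statement by Borel--Cantelli. Two elementary facts drive everything: $E_1$ has orthonormal columns, so $\|E_1 C^T\| = \|C^T\|$; and the matrix $Z$ appearing in the splitting \eqref{eq:C} is a partial isometry, i.e.\ its nonzero singular values are all equal to $1$, so $\|Z\| = 1$.

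Concretely, I would first note that $C^T$, being an $n \times kn$ matrix with i.i.d.\ complex Gaussian entries, is a submatrix of a $kn \times kn$ Gaussian matrix $\wt C$; hence $\|C^T\| \le \|\wt C\|$ by Theorem \ref{cor:Interlacing}, and Theorem \ref{thm:norm_bound} gives $\Prob(\|\wt C\| > A\sqrt{kn}) \le C \exp(-cAkn)$ for all $A$ above an absolute threshold, with $C,c>0$ absolute. Since $k$ is fixed, a suitable choice of $A$ and a renaming of constants yields $r,s>0$ with $\Prob(n^{-1/2}\|C^T\| > r) \le s\exp(-r^2 n)$. On the complementary event the triangle inequality gives $\|n^{-1/2} E_1 C^T - zI\| \le n^{-1/2}\|C^T\| + |z| \le r + |z|$, which is the first claimed bound. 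For the second I would use $M = Z + E_1 C^T$ and bound, on the same good event, $\|n^{-1/2} M - zI\| \le n^{-1/2}\|Z\| + n^{-1/2}\|C^T\| + |z| \le n^{-1/2} + r + |z| \le r + |z| + 1$ for every $n \ge 1$, which is the second tail bound with the same constants $r,s$.

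The almost sure conclusion then follows because $\sum_n s\exp(-r^2 n) < \infty$: by Borel--Cantelli, almost surely both bad events occur for only finitely many $n$, so almost surely, for all large $n$, $\sigma_1(n^{-1/2} E_1 C^T - zI) = \|n^{-1/2} E_1 C^T - zI\| \le r + |z|$ and $\sigma_1(n^{-1/2} M - zI) \le r + |z| + 1$; taking $d := r + |z| + 1$ gives the last assertion. I do not foresee any genuine obstacle here: the one point deserving a little care is fitting the exponential tail produced by Theorem \ref{thm:norm_bound} into the precise shape $s\exp(-r^2 n)$ required by the statement, which is merely a matter of selecting the free parameter $A$ and relabelling constants, and is in any case irrelevant for the use made of the lemma later, where only summability of the tail matters.
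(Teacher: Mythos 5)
Your proposal is correct and follows essentially the same route as the paper: the paper likewise reduces both bounds to the estimate $\Prob(\|C^T\|>r\sqrt n)\le s\exp(-crn)$ obtained via the submatrix interlacing (Theorem \ref{cor:Interlacing}) and Theorem \ref{thm:norm_bound}, uses $\|E_1C^T\|=\|C^T\|$ and the triangle inequality with $\|Z\|=1$, and concludes by Borel--Cantelli. Your remark about massaging the exponent into the exact form $s\exp(-r^2n)$ is a point the paper glosses over entirely, and, as you note, only summability of the tail is ever used.
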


	\begin{proof}
		Note that 
		\[
		\left\| n^{-1/2}M -zI\right \| \le \|n^{-1/2}E_1 C^T\| + |z| + n^{-1/2},
		\]
		\[
		\left\| n^{-1/2}E_1 C^T -zI\right \| \le \|n^{-1/2}E_1 C^T\| + |z|,
		\]
		and  $ \|E_1 C^T \|  =   \|C^T\|$, implying (see proof of Lemma \ref{lem:smallestsingM})
		\[
		\Prob (\| E_1 C^T\| > r\sqrt{n})  \le
		s\exp(-crn)
		\]
		where $c,s,r>0$ are absolute constants. The statement follows immediately.
	\end{proof}
	
	Lemma \ref{lem:sigmafnE1CT}, whose proof relies on Theorem \ref{thm:normpseudoinverse}, yields a probabilistic lower bound on the $f(n)$th singular value of $n^{-1/2} E_1 C^T - z I$.
	
	\begin{lemma}\label{lem:sigmafnE1CT}
		Let $0 < \delta < 1/2$,  $E_1, C^T$ be defined as in \eqref{eq:C} and immediately after it (so that $E_1 C^T$ is a $kn \times kn$ matrix with $k>1$), $f(n)=\lfloor kn-n^{1-\delta}\rfloor$, and $0 \neq z \in \C$. Then there exists a positive constant $t \leq 1$ and a positive constant $\ve > 0$ such that lower bound
		\[ \sigma_{f(n)} \left( n^{-1/2} E_1 C^T - z I \right) \ge tn^{\ve -1/2}   \]
		holds almost surely for all sufficiently large values of $n$.
	\end{lemma}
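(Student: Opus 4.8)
The plan is to reduce the claimed bound to a lower estimate for an intermediate singular value of the single random block $D := -n^{-1/2}C_{k-1} - zI_n$, and then to deduce that estimate from \autoref{thm:normpseudoinverse}, exploiting the fact that we are allowed to discard the bottom $\lceil n^{1-\delta}\rceil$ singular values. Write $W := n^{-1/2}E_1 C^T - zI$ and $m := \lceil n^{1-\delta}\rceil$, so that $f(n) = kn - m$ and $m < n$ for all sufficiently large $n$. Partitioning into a block of size $n$ and a block of size $(k-1)n$ gives
\[
W = \begin{bmatrix} D & R_1 \\ 0 & -zI_{(k-1)n}\end{bmatrix}, \qquad R_1 := -n^{-1/2}\begin{bmatrix} C_{k-2} & \cdots & C_0\end{bmatrix},
\]
and $W$ is almost surely invertible because $\det W = (-z)^{(k-1)n}\det D$ and $D$ is almost surely invertible. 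By \autoref{thm:norm_bound} and the Borel--Cantelli lemma, almost surely for all large $n$ one has $\|R_1\| \le c_R$ and $\|D\| \le c_D$ for suitable absolute constants $c_R, c_D > 0$; setting $G := \begin{bmatrix} I_n & z^{-1}R_1\end{bmatrix}$ this gives $\|G\| \le c_G := \sqrt{1 + |z|^{-2}c_R^2}$.

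The core of the argument is the reduction step. Since $W$ is block upper triangular,
\[
W^{-1} = \begin{bmatrix} D^{-1} & z^{-1}D^{-1}R_1 \\ 0 & -z^{-1}I_{(k-1)n}\end{bmatrix} = L + P,
\]
where $P := \diag(0, -z^{-1}I_{(k-1)n})$ has $\|P\| = |z|^{-1}$, and $L$ has at most $n$ nonzero rows, which form the $n \times kn$ matrix $\wt L = D^{-1}G$. From $\sigma_{f(n)}(W) = \sigma_{m+1}(W^{-1})^{-1}$ and \autoref{thm:Perturbation} applied to $W^{-1} = L + P$ we get $\sigma_{m+1}(W^{-1}) \le \sigma_{m+1}(L) + |z|^{-1}$; moreover, using $m + 1 \le n$ and the elementary inequality $\sigma_i(XY) \le \sigma_i(X)\,\|Y\|$,
\[
\sigma_{m+1}(L) = \sigma_{m+1}(\wt L) = \sigma_{m+1}(D^{-1}G) \le \|G\|\,\sigma_{m+1}(D^{-1}) = \frac{c_G}{\sigma_{n-m}(D)},
\]
the last equality because the singular values of $D^{-1}$ are the reciprocals of those of $D$ in reverse order. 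Combining these bounds and then $\sigma_{n-m}(D) \le \|D\| \le c_D$,
\[
\sigma_{f(n)}(W) \ \ge\ \Big( \tfrac{c_G}{\sigma_{n-m}(D)} + |z|^{-1}\Big)^{-1} \ \ge\ \frac{\sigma_{n-m}(D)}{c_G + |z|^{-1}c_D} \ =:\ c'\,\sigma_{n-m}(D).
\]
So it remains to show that $\sigma_{n-m}(D) \ge (t/c')\,n^{\varepsilon - 1/2}$ almost surely for all large $n$, for suitable $t \in (0,1]$ and $\varepsilon > 0$.

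To this end, delete the last $m$ columns of $D$, obtaining the $n \times (n-m)$ matrix $D' = -(n^{-1/2}C'_{k-1} + R_{D'})$, where $C'_{k-1}$ collects the first $n-m$ columns of $C_{k-1}$ and $R_{D'} = z\,[\,I_{n-m};\,0\,]$ is deterministic; by \autoref{cor:Interlacing}, $\sigma_{n-m}(D) \ge \sigma_{\min}(D')$. Transposing $D'$ and rescaling its Gaussian part to have variance $(n-m)^{-1}$ puts it in the exact setting of \autoref{thm:normpseudoinverse} with $n$ there replaced by $n-m$ and $N$ by $n$, so that the exponent $N - n + 1$ becomes $m + 1$; the variance rescaling contributes a factor $\big(\tfrac{n}{n-m}\big)^{m+1}$ which cancels against the $\big((n-m)\,n\big)^{m+1}$ coming from the $nN$ in that bound. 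Taking $\tau := (t/c')\,n^{\varepsilon - 1/2}$ and using $(m+1)^{2(m+1)+1/2} \ge \big((m+1)^2\big)^{m+1} \ge n^{(2-2\delta)(m+1)}$, one obtains
\[
\Prob\big(\sigma_{n-m}(D) \le \tau\big) \ \le\ \Prob\big(\sigma_{\min}(D') \le \tau\big) \ \le\ \frac{1}{\sqrt{2\pi}}\Big( (t/c')^2 e^2\, n^{\,2\varepsilon + 2\delta - 1}\Big)^{m+1}.
\]
Now choose $\varepsilon := \tfrac12\big(\tfrac12 - \delta\big) \in (0, \tfrac12)$: this is exactly where $\delta < 1/2$ is used, and it makes $2\varepsilon + 2\delta - 1 = \delta - \tfrac12 < 0$. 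Hence the base tends to $0$, so for all large $n$ it is $\le \tfrac12$, and the above probability is $\le \tfrac{1}{\sqrt{2\pi}}2^{-n^{1-\delta}}$, which is summable. Borel--Cantelli yields $\sigma_{n-m}(D) \ge (t/c')\,n^{\varepsilon-1/2}$ almost surely for all large $n$, and combining with the reduction step gives $\sigma_{f(n)}(W) \ge t\,n^{\varepsilon-1/2}$; shrinking $t$ if necessary so that $t \le 1$ finishes the proof.

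The step I expect to be the main obstacle is the reduction: one has to recognise that the \emph{small} singular values of the block-triangular $W$ — equivalently, the \emph{large} singular values of $W^{-1}$ — are controlled from below by the singular values of the lone random block $D$, and then to extract a super-polynomially small failure probability from \autoref{thm:normpseudoinverse}. This latter point succeeds only because discarding the bottom $m \asymp n^{1-\delta}$ singular values effectively replaces $D$ by a matrix with $m$ more columns than rows, so the pseudoinverse tail bound carries the large exponent $2(m+1)$; and the hypothesis $\delta < 1/2$ is precisely what leaves room to choose $\varepsilon \in (0, \tfrac12)$ with $2\varepsilon + 2\delta < 1$.
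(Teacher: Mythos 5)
Your argument is correct, but the reduction to Theorem \ref{thm:normpseudoinverse} follows a genuinely different route from the paper's. The paper keeps the first $f(n)$ rows of $n^{-1/2}E_1C^T-zI$ (via the submatrix interlacing of Theorem \ref{cor:Interlacing}), and then runs an explicit variational computation on the least singular value of the resulting $f(n)\times kn$ matrix $T$ --- minimizing $(\sqrt{y}\,\alpha-\sqrt{1-y}\,\beta)^2+y\gamma^2$ over $y\in(0,1)$ --- to conclude $\sigma_{\min}(T)\ge\min\{t\,\sigma_{\min}(R),|z|\}$ with $R=[H-zI\ \ P]$ an $n\times(n+m)$ matrix carrying $m+1$ surplus columns; Theorem \ref{thm:normpseudoinverse} is then applied to that $R$. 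You instead exploit the block-triangular structure head-on: invert $W$, use $\sigma_{f(n)}(W)=\sigma_{m+1}(W^{-1})^{-1}$, strip the deterministic $-z^{-1}I$ part with Mirsky's theorem (Theorem \ref{thm:Perturbation}), and use $\sigma_i(XY)\le\sigma_i(X)\|Y\|$ to land on the intermediate singular value $\sigma_{n-m}(D)$ of the single random block $D=-n^{-1/2}C_{k-1}-zI$, which you then control by deleting $m$ columns and applying Theorem \ref{thm:normpseudoinverse} to the transposed $(n-m)\times n$ matrix. Both routes end at the same tail bound with exponent $2(m+1)\asymp n^{1-\delta}$ and use $\delta<1/2$ in exactly the same way; your exponent and variance bookkeeping (the $(n/(n-m))^{m+1}$ rescaling factor absorbed into $(nN)^{m+1}$, and $(m+1)^{2(m+1)}\ge n^{(2-2\delta)(m+1)}$) is correct. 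Your reduction is shorter and more conceptual, avoiding the paper's hand optimization, at the modest price of invoking the sub-multiplicative singular-value inequality $\sigma_{i}(XY)\le\sigma_i(X)\sigma_1(Y)$ (standard, but not among the paper's quoted tools --- it would deserve a one-line justification, e.g.\ via best rank-$m$ approximation) and of relying on the invertibility of the deterministic complement $-zI_{(k-1)n}$, which the structure of $E_1C^T-zI$ happens to provide for $z\neq 0$.
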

	
	\begin{proof}
		Note first that, denoting by $\wt T$ the matrix composed by the first $f(n)$ rows of $n^{-1/2}E_1 C^T  - zI$,  then by the interlacing theorem for singular values  (Theorem \ref{cor:Interlacing}) we have
		\[
		\sigma_{f(n)}(n^{-1/2}E_1 C^T  - zI)\ge \sigma_{f(n)}(\wt T)
		\]
		so it suffices to study $\wt T$. To this goal,  since $f(n)\ge n$, we partition
		\[
		\wt T =
		\begin{pmatrix}
		H -zI & L & P\\
		& zI & 
		\end{pmatrix}
		\]
		where $H$ is a $n\times n$ matrix, $L$ is a $n\times( f(n) - n  )$ matrix and $P$ is a $n\times (kn-f(n))$ matrix. Since  a permutation of the columns does not change the singular values, we can equivalently study the matrix
		\[
		T =
		\begin{pmatrix}
		L & H-zI & P\\
		zI & & 
		\end{pmatrix}
		=
		\begin{pmatrix}
		L & R\\
		zI &  
		\end{pmatrix}
		\]
		where $R := [H-zI\,\,\,  P ]$. $T$ is an $f(n)\times kn$ matrix, and its least singular value $\sigma_{\min}(T)$ satisfies
		\[
		\sigma_{\min}(T)^2 = \inf_{\|v\|= 1} \|v^* T\|^2 =
		\inf_{\|v\|= 1} \| v_1^* L + zv_2^*\|^2 + \|v_1^*R\|^2
		\ge 
		\inf_{\|v\|= 1} 
		(
		\| v_1^* L\| - \|zv_2^*\|
		)^2
		+ \|v_1^*R\|^2,
		\]
		where $v^* = [v_1^*\,  v_2^*]$. There are three possibilities. If $v_2$ is zero, then the minimum is attained as $\sigma_{\min} ( [H-zI\,\, L\,\, P])^2$ and if $v_1$ is zero, then the minimum is simply $|z|^2$.
		If neither is true, we can minimize the expression over $\|v_1\|\ne 0,1$. To this goal, denote $y = \|v_1\|^2$, so that $1-y= \|v_2\|^2$ and define $w_1=: v_1/\sqrt y$,   $w_2:= v_2/\sqrt{1-y}$. Then
		\[
		(
		\| v_1^* L\| - \|zv_2^*\|
		)^2
		+ \|v_1^*R\|^2 =
		(
		\sqrt y\| w_1^* L\| - \sqrt{1-y}|z|
		)^2
		+ y\|w_1^* R\|^2.
		\]
		Define now $\alpha := \| w_1^* L\|$, $\beta := |z|$, $\gamma^2 := \|w_1^*R\|^2$, so that we end up with the problem of minimizing the function
		\[
		g(y) = (
		\sqrt y\alpha - \sqrt{1-y}\beta
		)^2
		+ y\gamma^2
		= 
		\beta^2 + y(\gamma^2+\alpha^2-\beta^2) -2\alpha\beta\sqrt{y-y^2}.
		\]
		To further simplify the notation, it is convenient to introduce $a := \gamma^2+\alpha^2-\beta^2$ and $b:=2\alpha\beta$. This trick yields
		\[
		g(y) = \beta^2 + ya -b\sqrt{y-y^2};
		\]
		\[
		g'(y) = a +b\frac{2y-1}{2\sqrt{y-y^2}};
		\]
		\[
		g''(y) = \frac{b}{4} (y-y^2)^{-3/2}  \ge 0.
		\]
		In particular, the computation of the second derivative shows that $g(y)$ is a convex function, and hence, the roots of its derivative must correspond to minima. If $b>0$, then the minimum is also unique. Observe that by assumption $\beta\ne 0$, so $b=0$ implies $w_1^*L=0$ and  $\sigma_{\min}(T)^2$ is surely greater than $|z|^2$ or $\min_{\|w_1\|=1} \gamma^2$. Thus, we assume instead $b> 0$ and find a root of the derivative.
		\[
		g'(y) = 0\implies b(2y-1) = -2a\sqrt{y-y^2} \implies
		\mathrm{sign}(a) = \mathrm{sign}(1-2y)
		\]
		\[
		b^2(4y^2-4y+1) = 4a^2(y-y^2) \implies 
		4(a^2+b^2)(y^2-y+\frac 14) - a^2 = 0
		\implies
		(y-\frac 12)^2 = \frac{a^2}{4(a^2+b^2)}
		\]
		so the unique root of $g'(y)$ is
		\[
		y_\star = \frac 12 -\frac{a}{2\sqrt{a^2+b^2}} \in (0,1).
		\]
		Moreover,
		\[
		g(y_\star)= \beta^2 +\frac a2 - \frac{a^2}{2\sqrt{a^2+b^2}}
		- \frac{b^2}{2\sqrt{a^2+b^2}}
		= |z|^2 +\frac{a-\sqrt{a^2+b^2}}{2}
		\]
		\[
		= \frac{\gamma^2+\alpha^2+|z|^2-\sqrt{(\gamma^2+\alpha^2+|z|^2)^2 -4|z|^2\gamma^2}}{2}
		\ge 
		\frac {|z|^2}2
		\frac{\gamma^2}{\gamma^2+\alpha^2+|z|^2}
		\]
		To minimize the last expression, we can maximize the denominator by
		\[
		\gamma^2 + \alpha^2
		= \|w_1^* [L\, R]\|^2 \le \|[L\, R]\|^2
		\le 
		(
		\frac 1{\sqrt n} \|C^T\| + |z|
		)^2
		\]
		which is, with high probability (see \eqref{norm_C}), bounded by $(r+|z|)^2$. We can thus say that there exists a constant $t>0$ such that
		\[
		\sigma_{\min}(T)^2 \ge \min\{
		\min_{\|w_1\|=1} t^2\gamma^2,
		\sigma_{\min} ( [H-zI\,\, L\,\, P])^2,
		|z|^2
		\}.
		\]
		Yet, $ \min_{\|w_1\|=1} \|w_1^*R\| = \sigma_{\min}(R)$, and again by the interlacing theorem for singular values (Theorem \ref{cor:Interlacing}) $\sigma_{\min} ( [H-zI\,\, L\,\, P]) \ge \sigma_{\min}(R)$. We can therefore conclude that
		\begin{equation}\label{eq:interl}
		\sigma_{f(n)}(n^{-1/2}E_1 C^T   - zI)\ge\sigma_{\min}(T) \ge  \min\{t\sigma_{\min}(R), |z|\}
		\end{equation}
		for some absolute constant $0<t\le 1$ with high probability, and 
		almost surely for all $n$ sufficiently large.

		Almost surely, $R$ is full rank, and thus
		$ \sigma_{\min}(R) = \| R^\dagger \|^{-1}$. In particular, Theorem \ref{thm:normpseudoinverse}  yields the tail bound
		
		\[
		\Prob_{R} \left(  \sigma_n(R)\le \tau \right)
		\le 
		\left( 2\pi (n^{1-\delta}+1) \right)^{-1/2}
		\left( 
		\frac{k^2n^2 \tau^2 e^2}
		{	(n^{1-\delta}+1)^2}
		\right)
		^{\lceil n^{1-\delta}\rceil+1},
		\]
		where we used that $R$ is a $n\times (kn+n-f(n))$ matrix with both dimension less than $kn$, and
		\[
		n^{1-\delta}+1\le 
		\lceil n^{1-\delta}\rceil +1 = (kn+n-f(n))-n+1 .
		\]
		Now, fix any $\ve$ such that $0 < \ve < 1/2 -  \delta$ (as $0<\delta<1/2$, this is surely possible). Choosing $\tau = n^{\ve - 1/2}$, this implies that for $n$ big enough,
		\[
		\Prob_{R} \left(  \sigma_n(R)\le n^{\ve - 1/2} \right)
		\le \frac{1}{\sqrt{ 2\pi}} n^{\delta/2-1/2}
		\left( 
		\frac{(ke)^2 n^{1+2 \ve}}
		{	n^{2-2\delta}}
		\right)
		^{ n^{1-\delta}+1}.
		\]
		Setting $c:=1-2\delta-2\ve > 0$, we conclude that
		\[
		\Prob_{R} \left(  \sigma_n(R)\le n^{\ve - 1/2} \right)
		\le \frac{(ke)^2}{\sqrt{2 \pi}} n^{\delta/2-1/2-c}
		\left( 
		(ke)^{-2/c} n 
		\right)
		^{-c n^{1-\delta}},
		\]
		and the right hand side goes exponentially to zero. We conclude that $\sigma_{\min}(R)$ is at least of the order $n^{\ve -1/2}$ with high probability, and almost surely for sufficiently large $n$. From \eqref{eq:interl}, we get that for some absolute constant $0<t\le 1$ and some $\ve > 0$ then almost surely, for all $n$ sufficiently large,
		\[
		\sigma_{f(n)}(n^{-1/2}E_1 C^T  - zI) \ge  tn^{\ve -1/2}.
		\]
		
	\end{proof}

	\subsection{Estimates on the singular values of certain random matrices in Section \ref{sec:k}}\label{sec:appk}

	In Lemma \ref{lem:greatestsvA} below, we obtain probabilistic bounds for the first $n$ singular values of the matrix $M-zI$, where $M$ is defined in \eqref{eq:matrix_split_k}.
	\begin{lemma}\label{lem:greatestsvA}
		Let $M$ be the $kn \times kn$ matrix defined as in \eqref{eq:matrix_split_k} and $z \in \C$ with norm different from 1. There exist constants $r,s,c >0$ such that, for every $k>2$, 
		\[
		\Prob \left( \sigma_{1}(M-zI)  >  r\sqrt k + 1 +|z| \right) \le s\exp(-crk),
		\qquad 
		\sigma_n(M-zI)\ge|1-|z||.
		\] 
		In particular, with probability 1,
		\[
		\sigma_{1}(M-zI)  \le   r\sqrt k + 1 + |z|,
		\qquad 
		\sigma_n(M-zI)\ge|1-|z||
		\]
		for all large enough $k$.
	\end{lemma}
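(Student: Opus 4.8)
The plan is to handle the two estimates separately: the lower bound on $\sigma_n(M-zI)$ is purely deterministic and rests on an interlacing argument, while the upper bound on $\sigma_1(M-zI)=\|M-zI\|$ is probabilistic and reduces to the Gaussian norm tail bound of Theorem~\ref{thm:norm_bound}. For the lower bound I would use the decomposition $M=B+A$ of \eqref{eq:matrix_split_k}, so that $M-zI=(B-zI)+A$ with $A=E_1\wh C^T$; since $E_1\in\C^{kn\times n}$, the perturbation $A$ has rank at most $n$. The matrix $B$ is circulant, hence normal, with eigenvalues the $k$-th roots of unity (each of multiplicity $n$), so $B-zI$ is normal and its singular values are $|\exp(2\pi\textnormal{i}j/k)-z|$, each of which is at least $\bigl||\exp(2\pi\textnormal{i}j/k)|-|z|\bigr|=|1-|z||$. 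Applying the interlacing theorem for low-rank perturbations (Theorem~\ref{thm:Interlacing2}) with perturbation rank $n$ gives $\sigma_n(M-zI)\ge\sigma_{2n}(B-zI)$, and $\sigma_{2n}(B-zI)$, being a singular value of $B-zI$, is bounded below by $|1-|z||$. This is valid whenever $2n\le kn$, i.e.\ for all $k\ge2$, and holds for every realization, so no probability enters.

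For the upper bound I would estimate $\|M-zI\|\le\|B-zI\|+\|A\|$. As above $\|B-zI\|\le 1+|z|$, and since $E_1$ has orthonormal columns, $\|A\|=\|\wh C^T\|\le\|C^T\|+\|e_k^T\otimes I_n\|=\|C^T\|+1$. The matrix $C^T$ is $n\times kn$ with i.i.d.\ Gaussian entries, so embedding it as a submatrix of a $kn\times kn$ Gaussian matrix $\wt C$ and invoking submatrix interlacing (Theorem~\ref{cor:Interlacing}) yields $\|C^T\|\le\|\wt C\|$. Theorem~\ref{thm:norm_bound} then gives $\Prob(\|\wt C\|>A\sqrt{kn})\le C\exp(-cAkn)$ for $A\ge C$; choosing $A$ a suitable fixed constant (depending on the fixed parameter $n$) so that $A\sqrt{kn}=(r-1)\sqrt k$ for a sufficiently large $r$, and using $\sqrt k\ge1$, one obtains $\Prob(\|M-zI\|>r\sqrt k+1+|z|)\le s\exp(-crk)$ for appropriate positive $r,s,c$. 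Since $\sum_k s\exp(-crk)<\infty$, Borel--Cantelli upgrades this to the almost sure bound $\sigma_1(M-zI)\le r\sqrt k+1+|z|$ for all large $k$, while the lower bound holds for all $k>2$.

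The computations are routine, so there is no serious obstacle; the only point requiring a little attention is the lower bound, where one must recognise $M-zI$ as a rank-$\le n$ perturbation of the circulant $B-zI$ and check that the shifted singular value $\sigma_{2n}(B-zI)$ surviving the interlacing is still bounded below by $|1-|z||$ --- which is immediate since \emph{every} singular value of $B-zI$ enjoys that bound. The mild probabilistic subtlety is that Theorem~\ref{thm:norm_bound} is stated for square matrices, which is why one passes from the rectangular $C^T$ to a square Gaussian matrix via submatrix interlacing before applying it.
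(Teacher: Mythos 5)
Your proposal is correct and follows essentially the same route as the paper: the deterministic lower bound via the rank-$\le n$ perturbation interlacing of Theorem~\ref{thm:Interlacing2} applied to the circulant $B-zI$ (whose singular values are all $\ge|1-|z||$), and the upper bound via $\|C^T\|\le\|\wt C\|$ and the Gaussian norm tail of Theorem~\ref{thm:norm_bound}, with Borel--Cantelli for the almost sure statement. The only cosmetic difference is that the paper bounds $\sigma_1(M-zI)\le\|C^T\|+1+|z|$ directly from the companion splitting $M=Z+E_1C^T$, whereas your route through $B+E_1\wh C^T$ picks up an extra additive constant that you correctly absorb into $r\sqrt k$.
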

	\begin{proof}
		From the proof of Lemma \ref{lem:smallestsingM}, by exchanging the roles of $k$ and $n$, we know that 
		\begin{equation}\label{eq:norm_C}
		\Prob (\|  C^T\| > r\sqrt{k})  \le
		s\exp(-crk),
		\end{equation}
		where $r,s,c$ are absolute positive constants. 
		As a consequence, with high probability, 
		\[
		\sigma_1(M-zI)  \le\|C^T\| + 1 +  |z|\le  r\sqrt k + 1 +  |z|.
		\]
		Moreover, from \eqref{eq:bound_sv_B} and \eqref{eq:interlacing_AB}, we know that, if $k>2$,
		\[
		\sigma_n(M-zI)\ge  \sigma_{n+1}(M-zI)\ge \sigma_{2n+1}(B-zI) \ge |1-|z||>0.
		\]
	\end{proof}

	In Lemma \ref{lem:smallestsvA} below, we obtain a lower bound (valid with probability $1$) for the smallest singular value of the matrix $M-zI$, where $M$ is defined in \eqref{eq:matrix_split_k}.
	\begin{lemma}\label{lem:smallestsvA}
		Let $M$ be the $kn \times kn$ matrix defined as in \eqref{eq:matrix_split_k} and $z \in \C$ with norm different from 1. There exist a positive constant $t$ such that, with probability 1,
		\[ \sigma_{kn} \left( M - z I \right) \ge tk^{-2}  \]
		for all sufficiently large values of $k$.
	\end{lemma}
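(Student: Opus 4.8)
The plan is to combine the Woodbury identity (Lemma~\ref{lem:woodbury}) with the observation that the $n\times n$ matrix appearing in it is, up to a scalar factor, exactly $P_k(z)$; this reduces the estimate to a lower bound on the least singular value of a shifted $n\times n$ Ginibre matrix, which is controlled by Theorem~\ref{thm:normpseudoinverse} and Borel--Cantelli.

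Concretely, I would write $M-zI = (B-zI) + E_1\wh C^T$ using the decomposition \eqref{eq:matrix_split_k}. Since $B$ is normal with spectrum on the unit circle and $|z|\ne 1$, the matrix $B-zI$ is invertible with $\|(B-zI)^{-1}\|\le |1-|z||^{-1}$; and since $z$ is a fixed deterministic point while $P_k$ is Gaussian, almost surely $\det P_k(z)\ne 0$, so $M-zI$ is invertible (its determinant being $\pm\det P_k(z)$). Solving $(B-zI)X = E_1$ block-row by block-row (using $B^k = I$) one finds that the first block column of $(B-zI)^{-1}$ equals $\frac{1}{1-z^k}[\,z^{k-1}I_n;\ z^{k-2}I_n;\ \dots;\ I_n\,]$; recalling $\wh C^T = -[\,C_{k-1}\ \cdots\ C_1\ \ C_0+I_n\,]$ and $P_k(z) = z^kI_n + \sum_{i=0}^{k-1}z^iC_i$, a short computation gives the key identity
\[
I_n + W = \frac{P_k(z)}{z^k-1}, \qquad W := \wh C^T(B-zI)^{-1}E_1\in\C^{n\times n}
\]
(which is consistent with the Sylvester determinant identity, since $\det(M-zI)=\pm\det P_k(z)$ and $\det(B-zI)=\pm(z^k-1)^n$). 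In particular $I_n+W$ is almost surely invertible, so Lemma~\ref{lem:woodbury} applies and yields
\[
(M-zI)^{-1} = (B-zI)^{-1} - (B-zI)^{-1}E_1(I_n+W)^{-1}\wh C^T(B-zI)^{-1},
\]
whence $\|(M-zI)^{-1}\| \le |1-|z||^{-1}\big(1 + |1-|z||^{-1}\,\|\wh C^T\|\cdot\|(I_n+W)^{-1}\|\big)$ and $\|(I_n+W)^{-1}\| = |z^k-1|/\sigma_n(P_k(z))$.

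It then remains to bound $\sigma_n(P_k(z))$ from below and $\|\wh C^T\|$ from above. For the former, note that $\sum_{i=0}^{k-1}z^iC_i$ is distributed as $v_k\wh G$, where $v_k^2 := \sum_{i=0}^{k-1}|z|^{2i}$ and $\wh G$ is an $n\times n$ matrix of i.i.d.\ standard complex Gaussians; hence $\sigma_n(P_k(z))$ is distributed as $v_k\,\sigma_n(\wh G + \zeta_kI_n)$ with $\zeta_k := z^k/v_k$. Splitting the cases $|z|<1$ and $|z|>1$ one checks the elementary bounds $|z^k-1|/v_k \le 2\max(1,|z|)$ and $|\zeta_k|\le\max(1,|z|)$ for every $k\ge 1$. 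Applying Theorem~\ref{thm:normpseudoinverse} with $N=n$ and deterministic part $\zeta_kI_n/\sqrt n$ (after rescaling to pass from variance $n^{-1}$ to variance $1$) gives $\Prob\big(\sigma_n(\wh G+\zeta_kI_n)\le s\big)\le \frac{ne^2}{\sqrt{2\pi}}\,s^2$, uniformly in $k$; taking $s=1/k$ makes the probabilities summable, so by Borel--Cantelli, almost surely $\sigma_n(P_k(z))\ge v_k/k$ for all large $k$, and therefore $\|(I_n+W)^{-1}\|\le 2\max(1,|z|)\,k$. For the latter, $\|\wh C^T\|\le\|C^T\|+1\le r\sqrt k+1$ almost surely for all large $k$, exactly as in the proof of Lemma~\ref{lem:greatestsvA} (cf.~\eqref{eq:norm_C}). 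Combining, $\|(M-zI)^{-1}\| = O(k^{3/2})$ almost surely, hence $\sigma_{kn}(M-zI)\ge tk^{-2}$ for a suitable constant $t>0$ and all sufficiently large $k$.

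The main obstacle is recognizing and proving the exact identity $I_n+W = P_k(z)/(z^k-1)$: without it one is left trying to control $\|(I_n+W)^{-1}\|$ with $W$ a random $n\times n$ matrix whose mean depends on $k$ (and, for $|z|>1$, grows with $k$), which is awkward. Once this identity is in hand the rest is a routine application of Theorem~\ref{thm:normpseudoinverse} and Borel--Cantelli, paralleling the structure of the earlier lemmata.
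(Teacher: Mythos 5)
Your proposal is correct and takes essentially the same route as the paper's proof: Woodbury applied to $M-zI=(B-zI)+E_1\wh C^T$, the explicit first block column of $(B-zI)^{-1}$ yielding $I_n+\wh C^T(B-zI)^{-1}E_1=P_k(z)/(z^k-1)$ (the paper writes this as $\tfrac{z^k}{z^k-1}I-\sum_{i}\tfrac{z^{k-i}}{1-z^k}C_{k-i}$ and computes the variance $c(k)^2$ of its Gaussian part rather than naming the identity), then the square case of Theorem~\ref{thm:normpseudoinverse} plus Borel--Cantelli to get $\|(I_n+W)^{-1}\|=O(k)$, combined with $\|\wh C^T\|=O(\sqrt k)$. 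Your bookkeeping via $v_k$ and $|z^k-1|/v_k\le 2\max(1,|z|)$ is equivalent to the paper's observation that $c(k)$ tends to a positive constant, and your final $O(k^{3/2})$ bound on $\|(M-zI)^{-1}\|$ is the same estimate the paper simply rounds up to $t^{-1}k^2$.
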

	\begin{proof}
		If we use Woodbury Lemma (Lemma \ref{lem:woodbury}) on the splitting 
		$
		M - zI = (B -zI) + E_1\wh C^T,
		$
		then\footnote{$B-zI$ is invertible, and here we are assuming $I + \wh C^T(B -zI)^{-1}E_1$ is also invertible, which is true with probability $1$. }
		\[
		(M-zI)^{-1} = (B -zI)^{-1} - (B -zI)^{-1}E_1(I + \wh C^T(B -zI)^{-1}E_1)^{-1}\wh C^T(B -zI)^{-1}
		\]
		and
		\begin{equation}\label{eq:norm_inverse_A}
		\|(M-zI)^{-1}\| \le
		\|(B-zI)^{-1}\|\left[ 
		1 +
		\|(B-zI)^{-1}\|
		\|\wh C^T\|
		\|(I + \wh C^T(B -zI)^{-1}E_1)^{-1}\|
		\right].
		\end{equation}
		$B-zI$ is a circulant matrix, so its inverse is still a circulant matrix and its norm can be estimated with \eqref{eq:bound_sv_B} by
		\begin{equation}\label{eq:norm_inverse_B}
		\|(B-zI)^{-1}\| = \sigma_{kn}(B-zI)^{-1} \le  \frac{1}{|1-|z||}.
		\end{equation}
		More specifically, $(B-zI)^{-1}$ is a block circulant matrix with first block column
		\[
		(B-zI)^{-1} =
		\begin{pmatrix}
		\frac{z^{k-1}}{1-z^k} I_n  & \\
		\frac{z^{k-2}}{1-z^k} I_n & \ddots \\
		\vdots & \ddots \\
		\frac{1}{1-z^k} I_n & \ddots  
		\end{pmatrix}.
		\]
		Consider now the matrix 
		\[
		I + \wh C^T(B -zI)^{-1}E_1 
		=
		\frac{z^k}{z^k-1}I   - \sum_{i=1}^k \frac{z^{k-i}}{1-z^k} C_{k-i}.
		\]
		It consists of the sum of a constant matrix, and a linear combination of i.i.d. Gaussian variables all having mean $0$. Such a linear combination is still a normally distributed variable with mean $0$ and variance 
		\[
		\sum_{i=1}^k \left| \frac{z^{k-i}}{1-z^k}\right|^2
		=
		\frac{1}{|1-z^k|^2}\sum_{i=0}^{k-1} |z|^{2i}
		=
		\frac{1}{1-|z|^2}
		\frac{1-|z|^{2k}}{|1-z^k|^2}:= c(k)^2
		\] 
		where 
		$c(k) \to |1-|z|^2|^{-1/2}$ for $k\to\infty$.
		We can thus write
		\[
		I + \wh C^T(B -zI)^{-1}E_1 
		=
		\frac{z^k}{z^k-1}I + c(k)G
		\]
		where $G$ is a Gaussian random $n\times n$ matrix where each entry has mean zero and unit variance.
		In Theorem \ref{thm:normpseudoinverse}, we proved that for any deterministic matrix $S$, we have  
		\begin{equation}\label{lsv}
		\f P \left(  \sigma_n(n^{-1/2}G +S)\le t \right)
		\le 
		\frac {t^2n^2e^2}{\sqrt{2\pi} 
		},
		\end{equation}
		so
		
		\begin{align*}
		\f P \left(   \|(I + \wh C^T(B -zI)^{-1}E_1)^{-1}\|\ge p \right)
		&=
		\f P \left( \sigma_n(I + \wh C^T(B -zI)^{-1}E_1 )\le \frac 1p \right)\\
		&=
		\f P \left(  \sigma_n
		\left( 
		\frac{z^k}{z^k-1}I + c(k)G
		\right) \le \frac 1p \right)\\
		&=
		\f P \left(  \sigma_n
		\left( 
		\frac{n^{-1/2}z^k}{c(k)(z^k-1)}I + n^{-1/2}G
		\right) \le (n^{1/2}c(k)p)^{-1} \right)\\
		&\le \frac {ne^2}{\sqrt{2\pi} c(k)^2p^2
		}.
		\end{align*}
		Since $ne^2/\sqrt{2\pi}$ does not depend on $k$, we can choose $p=k$ and conclude by the Borel-Cantelli Lemma that with probability 1 and for any $k$ sufficiently large,
		\begin{equation}\label{eq:norm_inverse}
		\|(I + \wh C^T(B -zI)^{-1}E_1)^{-1}\| < k.
		\end{equation}
		Finally, from \eqref{eq:norm_C}, we have that almost surely, for all $k$ sufficiently large,
		\begin{equation}\label{eq:norm_C_hat}
		\|\wh C^T\|\le \|C^T\| + 1 \le 1 + r\sqrt k
		\end{equation}
		for some absolute constant $r>0$. 
		Gathering all the bounds \eqref{eq:norm_inverse_B},  \eqref{eq:norm_inverse}, \eqref{eq:norm_C_hat} and substituting into \eqref{eq:norm_inverse_A},  we find that 
		\[
		\|(M-zI)^{-1}\| \le
		\frac{1}{|1-|z||}\left[ 
		1 +
		\frac{1}{|1-|z||}
		(r\sqrt k  +1)k
		\right]\le t^{-1}k^2
		\implies
		\sigma_{kn}(M-zI) \ge tk^{-2}
		\]
		for sufficiently large $k$ with probability 1, where $t$ is a positive constant depending only on $n$ and $z$.

	\end{proof}

\end{document}